\newcommand{\pd}[3]{\frac{\partial ^{#1} #2}{\partial #3}}
\newcommand{\cost}{\mathrm{cost}}
\newcommand{\supp}{\mathrm{supp}}
\newcommand{\ind}{\mathrm{ind}}
\newcommand{\N}[0]{\mathbb{N}}
\newcommand{\Z}[0]{\mathbb{Z}}
\newcommand{\R}[0]{\mathbb{R}}
\pgfplotsset{compat=newest}
\pgfplotsset{plot coordinates/math parser=false}
\newlength\figureheight
\newlength\figurewidth
\newcommand{\BIG}{\bBigg@{3}}
\newcommand{\vast}{\bBigg@{4}}
\newcommand{\Vast}{\bBigg@{5}}
\newtheorem{theorem}{Theorem}
\newtheorem{lemma}[theorem]{Lemma}
\newtheorem{corollary}[theorem]{Corollary}
\newenvironment{proof}{\begin{trivlist}
    \item[\hskip\labelsep{\bf Proof.}]}{$\hfill\Box$\end{trivlist}}
{\theoremstyle{plain} \theorembodyfont{\rmfamily}

\newtheorem{remark}[theorem]{Remark}

\newtheorem{assumption}{\bf Assumption}

\newcommand{\bsDelta}{{\boldsymbol{\Delta}}}

\newcommand{\bse}{{\boldsymbol{e}}}

\newcommand{\bsalpha}{{\boldsymbol{\alpha}}}
\newcommand{\bsgamma}{{\boldsymbol{\gamma}}}

\newcommand{\bseta}{{\boldsymbol{\eta}}}

\newcommand{\bsnu}{{\boldsymbol{\nu}}}

\newcommand{\bstau}{{\boldsymbol{\tau}}}

\newcommand{\bsrho}{\boldsymbol{\rho}}

\newcommand{\bspsi}{\boldsymbol{\psi}}

\newcommand{\bsq}{{\boldsymbol{q}}}

\newcommand{\bsu}{{\boldsymbol{u}}}

\newcommand{\bsx}{{\boldsymbol{x}}}

\newcommand{\bsy}{{\boldsymbol{y}}}
\newcommand{\bsA}{\boldsymbol{A}}

\newcommand{\bsY}{\boldsymbol{Y}}
\newcommand{\bsz}{{\boldsymbol{z}}}

\newcommand{\bsW}{{\boldsymbol{W}}}

\newcommand{\bszero}{{\boldsymbol{0}}}
\newcommand{\bsone}{{\boldsymbol{1}}}
\newcommand{\bsPhi}{\boldsymbol{\Phi}}

\newcommand{\rd}{\mathrm{d}}

\newcommand{\bbP}{\mathbb{P}} 
\newcommand{\bbR}{\mathbb{R}}

\newcommand{\bbN}{\mathbb{N}}
\newcommand{\bbE}{\mathbb{E}}

\newcommand{\calH}{\mathcal{H}}

\newcommand{\calO}{\mathcal{O}}

\newcommand{\calW}{\mathcal{W}}


\newcommand{\fhat}{\widehat{f}}
\newcommand{\Fhat}{\widehat{F}}
\newcommand{\ftilde}{\widetilde{f}}
\newcommand{\Ftilde}{\widetilde{F}}



\def\R{\mathbb{R}}

\newcommand{\setu}{{\mathrm{\mathfrak{u}}}}

\newcommand{\mask}[1]{{}}
\newcommand{\bigO}{\mathcal{O}}

\definecolor{darkred}{RGB}{139,0,0}
\definecolor{darkgreen}{RGB}{0,100,0}
\definecolor{darkmagenta}{RGB}{170,0,120}
\definecolor{darkpurple}{RGB}{110,0,180}
\definecolor{darkblue}{RGB}{40,0,200}
\definecolor{darkbrown}{rgb}{0.75,0.40,0.15}


\newcommand{\be}{\begin{equation}}
\newcommand{\ee}{\end{equation}}
\newcommand{\bea}{\begin{eqnarray}}
\newcommand{\eea}{\end{eqnarray}}
\newcommand{\beas}{\begin{eqnarray*}}
\newcommand{\eeas}{\end{eqnarray*}}




\def\r2p{{\sqrt{2\pi}}}



\numberwithin{equation}{section}
\numberwithin{theorem}{section}
\numberwithin{algorithm}{section}

\makeatletter
\let\@fnsymbol\@arabic
\makeatother

\title{Analysis of preintegration followed by quasi-Monte Carlo integration for distribution functions and densities}

\date{\today}

\author{Alexander D. Gilbert\footnotemark[1] \and
             Frances Y. Kuo\footnotemark[1] \and
             Ian H. Sloan\footnotemark[1]}

\begin{document}
\maketitle

\footnotetext[1]{School of Mathematics and Statistics, UNSW Sydney, Sydney NSW 2052, Australia.\\
                           \texttt{alexander.gilbert@unsw.edu.au},
                           \texttt{f.kuo@unsw.edu.au},
                           \texttt{i.sloan@unsw.edu.au}
                           }

\begin{abstract}
In this paper, we analyse a method for approximating the
distribution function and density of a random variable  that depends in a
non-trivial way on a possibly high number of independent random variables,
each with support on the whole real line. Starting with the integral
formulations of the distribution and density, the method involves
\emph{smoothing} the original integrand \emph{by preintegration}
with respect to one suitably chosen variable, and then applying a suitable
quasi-Monte Carlo (QMC) method to compute the integral of the resulting
smoother function. Interpolation is then used to reconstruct the
distribution or density on an interval. The preintegration technique is a
special case of \emph{conditional sampling}, a method that has previously
been applied to a wide range of problems in statistics and computational
finance. In particular,  the pointwise approximation studied in this work
is a specific case of the conditional density estimator previously
considered in \emph{L'Ecuyer et al., arXiv:1906.04607}. Our theory
provides a rigorous regularity analysis of the preintegrated function,
which is then used to show that the errors of the pointwise and
interpolated estimators can both achieve nearly first-order convergence.
Numerical results support the theory.
\end{abstract}

\section{Introduction}
 
In this paper, we analyse a numerical method for the computation of the
cumulative distribution function (cdf) or probability density
function (pdf) of a continuous random variable $X$, where $X$ depends in a
nontrivial way on many independent continuous random variables $Y_0, Y_1,
\ldots, Y_d$ with known pdfs, each of which has support on the whole real line.
We write $X = \phi(Y_0, \ldots, Y_d)$, where
often the computation of $\phi$ requires significant resources, for
example, the solution of a partial differential equation.

Starting with the formulation of the cdf at a given point as an expected
value (integral) of an indicator function (see \eqref{eq:cdf-expect}
below), the method consists of \emph{preintegration}
\cite{GKS13,GKLS18} (a process in which one well-chosen variable is first
integrated out, with the aim of producing a relatively smooth function of
the remaining $d$ variables), followed by a suitable quasi-Monte
Carlo (QMC) method to integrate over the remaining variables.  This
approach overcomes the difficult aspect of the original integration
problem, namely, that $\phi$ occurs in the argument of a non-smooth
function (a jump discontinuity in the case of the cdf; a delta function in
the case of the pdf), making the direct application of QMC integration
problematic. Interpolation is then used to reconstruct the cdf on a given
interval. This is useful for applications where one wishes to build a
surrogate of the cdf to be used many times within a larger computational
problem, e.g., when evaluating $\phi$ is computationally expensive
and so computing pointwise approximations is also expensive.

\emph{Smoothing by preintegration} can be considered as a special
case of \emph{conditional sampling} from the statistics and computational
finance literature
\cite{ACN13a,ACN13b,BayB-HTemp21,Glasserman,GlaSta01,Hol11,LEcLem00,WWH17},
whereby a given function has its variance reduced by the operation of
conditioning on or ``hiding'' (integrating out)
partial information, reducing the problem to a conditional expectation.
Preintegration involves integrating out just a single variable, say $Y_0$, which is equivalent to
conditioning on the other remaining variables, $Y_1, Y_2, \ldots, Y_d$.
In particular, the pointwise approximation method in the present paper is a specific case of the
conditional density estimator studied in \cite{LEcPuchBAb19}.

The principal contribution of the present paper lies in a rigorous error
analysis of the method, assuming only properties of the original function
$\phi$ \emph{before} performing preintegration. The first step in the
analysis (a nontrivial one) is to establish regularity properties of the
preintegrated function, where regularity is meant in the sense of the preintegrated function having all its mixed derivatives of first order  being square integrable.

The analysis in \cite{LEcPuchBAb19} is
essentially different, in that it started with an assumption of suitable
smoothness of the resulting integrand \emph{after} preintegration, not the
original function.

The minimal smoothness property assumed in classical QMC analyses on the
unit cube is membership of BVHK --- the class of functions with bounded
variation in the sense of Hardy and Krause. A sufficient condition for
membership of BVHK is that $f$ has continuous, integrable mixed
derivatives of first order. Our analysis and function space setting
\cite{GKS22,NK14} include preintegrated functions which, after
transformation to the unit cube, do \emph{not} belong to BVHK, see
Appendix~\ref{app:1} for further details.

In this paper we assume that the random variables $Y_i$ for $i =
0,\ldots,d$ have as support the whole real line $\bbR$. This
assumption is essential, since the results do not apply if the $Y_i$
have compact support. In Appendix~\ref{app:2} we give a simple
example on the unit cube illustrating that a single step of
preintegration is not sufficient for the preintegrated function to
belong to BVHK, neither does it possess the necessary mixed derivative
smoothness. To achieve the required first-order mixed derivative
smoothness on $[0, 1]^d$, the paper \cite{GKS10} shows that, in general, one must
perform preintegration with respect to $d/2$ different variables.

It is now known \cite{GKS22b} that for the preintegrated
function to have the classical smoothness property assumed in QMC
analyses, it is necessary (as assumed here and in \cite{GKLS18}) that $\phi$
be a monotone function of the chosen preintegration
variable. If that monotonicity assumption does not hold and $d \ge 2$,
then from \cite{GKS22b} the preintegrated function has generically
one-sided square-root or higher-root singularities, which, as shown in Appendix~\ref{app:3}, 
precludes membership of BVHK.

\subsection{The problem and the approach}

Let $X$ be a continuous real-valued random variable, and denote its
cumulative distribution function (cdf) and probability density function
(pdf) by $F \coloneqq F_X$ and $f \coloneqq f_X$, respectively, which are 
defined on an interval $[a, b]$ but which are
not known \emph{a priori}.
Suppose that $X$ is a function of $d + 1$ independent random
variables $Y_0, Y_1, \ldots, Y_d \in \R$,
\begin{equation}
\label{eq:X}
X = \phi(Y_0, Y_1, \ldots, Y_d)\,,
\end{equation}
for some computable (but possibly expensive) function $\phi : \R^{d +
1} \to \R$. Suppose also that the density and distribution functions of
each $Y_i$ are known, and are denoted by $\rho_i$ and $\Phi_i$,
respectively, with $\supp(\rho_i) = \R$.
Realisations of the random variables $X$ and $Y_i$ are denoted by $x$ and $y_i$, respectively.

An example of such a random variable $X$ is the value of an Asian option,
where the random variables $Y_0, \ldots, Y_d$ are the Brownian motion
increments at each time step and are normally distributed. Another example
is the linear functional of the solution of a PDE with a $\log$-normal
random coefficient, where $Y_0, \ldots, Y_d$ are zero-mean normal random
variables that correspond to the random parameters in a series
expansion of a Gaussian random field.

Our goal is to approximate the cdf $F$ and the pdf $f$ (the derivative
of $F$) on a compact interval $[a, b] \subset\bbR$, which is done in two
steps:
\begin{enumerate}
\item Approximate $F$ and $f$ at finitely many points $\{t_m\}_{m
    = 0}^M \subset [a, b]$ using quasi-Monte Carlo (QMC) with
    preintegration (see Sections~\ref{sec:qmc} and~\ref{sec:preint},
    respectively).
\item Reconstruct $F$ and $f$ on $[a, b]$ by interpolating the
    approximations at the points $\{t_m\}_{m = 0}^M$.
\end{enumerate}

The cdf and pdf can each be written as expected values (i.e.,
potentially high-dimensional integrals) with respect to $Y_0,
Y_1, \ldots, Y_d$. For $t \in [a, b]$, we have
\begin{align} \label{eq:cdf-expect}
 F(t)
&=
\bbE \big[ \ind(t - X)\big]
= \int_{\R^{d + 1}}
 \ind\big(t - \phi(y_0, \ldots, y_d )\big)
\Bigg(\prod_{i = 0}^d \rho_i(y_i)\Bigg)
\rd y_0 \cdots \rd y_d\,, \\
\label{eq:pdf-expect}
f(t) &= \bbE \big[ \delta(t - X) \big] =
\int_{\R^{d + 1}} \delta\big(t - \phi(y_0, \ldots, y_d)\big)
\Bigg(\prod_{i = 0}^d \rho_i(y_i)\Bigg) \rd y_0 \cdots \rd y_d\,,
\end{align}
where $\ind(\cdot)$ is the indicator function
\begin{align*}
 \ind(z) =
 \begin{cases}
 1 & \text{if } z \ge 0\,,\\
 0 & \text{otherwise\,,}
 \end{cases}
\end{align*}
and $\delta(\cdot)$ is the Dirac $\delta$ distribution characterised by
the properties
\begin{align} \label{eq:dirac}
 &\delta(z) = 0 \quad \text{for all } z \neq 0,\quad
 \text{and} \nonumber\\
 &\int_{-\infty}^\infty g(z)\, \delta(z) \, \rd z = g(0)
 \quad\mbox{for all sufficiently smooth functions } g\,.
\end{align}

Note that if the pdf exists, then it is the derivative of the cdf,
i.e., $f = F'$. Indeed, the integral in \eqref{eq:pdf-expect}
can be interpreted as differentiating \eqref{eq:cdf-expect} with
respect to $t$ in the distributional sense (recall that $\delta$ is the
distributional derivative of $\ind$). In the following, we will give more
precise conditions we need to impose on the function $\phi$ such that
the integral formulations \eqref{eq:cdf-expect} and \eqref{eq:pdf-expect}
are well defined.

QMC theory alone is unable to tackle integrals such as
\eqref{eq:cdf-expect} and \eqref{eq:pdf-expect}
due to the discontinuity introduced by the indicator and Dirac $\delta$
functions. This discontinuity means that the integrand fails to belong to
the function spaces required for QMC theory. The integrand in the
formulation of the pdf \eqref{eq:pdf-expect} is not even a function, but
rather a distribution that is 0 everywhere except when $\phi(y_0, \ldots,
y_d) = t$. However, recent work \cite{GKLS18} on smoothing by
preintegration was successful in handling simple discontinuities caused by
an indicator function in the integrand, both practically and
theoretically. In this paper, we extend the work to cover
distributions involving a $\delta$ function, as well as extending the
theory for the indicator function.

\subsection{Preintegration}
To explain the idea of preintegration, consider a simple discontinuous
function
\[
  g(y_0, \ldots, y_d) = \ind\big(\phi(y_0, \ldots, y_d)\big)\,,
\]
where the inner function $\phi: \R^{d + 1} \to \R$ is sufficiently smooth
and satisfies certain technical assumptions with respect to a specially
chosen variable, which throughout this paper we shall take to be $y_0$. As stated above, a key assumption is that $\phi$ is strictly
increasing in $y_0$
\begin{equation}\label{eq:monotone}
\frac{\partial\phi}{\partial{y_0}}> 0,
\end{equation}
(cf.~Assumption~\ref{asm:phi} below). To perform the preintegration
step, we integrate with respect to this special variable $y_0$ to
give a \emph{preintegrated} function
\begin{equation}\label{eq:P0}
 P_0 g(y_1, \ldots, y_d)
 := \int_{-\infty}^\infty g(y_0, \ldots, y_d)\, \rho_0(y_0) \, \rd y_0\,,
\end{equation}
which is now a $d$-variate function of the remaining variables $y_1,
\ldots, y_d$.

The key point from \cite{GKLS18} is that if we fix $(y_1, \ldots, y_d) \in
\R^d$ and treat $\phi(\cdot, y_1, \ldots, y_d)$ as a function of the
single variable $y_0$, then since $\phi$ is strictly increasing with
respect to~$y_0$, the discontinuity in $g$ either occurs at a single
point, in which case that variable can then be integrated out, or does not
occur at all. Thus after preintegration, there is no longer any
discontinuity and the result is a $d$-variate function $P_0g$ that under
suitable conditions is as smooth as the original smooth function~$\phi$.
In this way, after performing the preintegration step, either exactly or
numerically, one can use a $d$-dimensional cubature rule for the remaining
dimensions. Due to the smoothness of $P_0 g$, the cubature
error will now converge at a faster rate, e.g., close to $\bigO(1/N)$ for
a QMC method using $N$ points.

The smoothing by preintegration step was recently analysed in
\cite{GKLS18} which extends the earlier work
\cite{GKS13,GKS17note,GKS17}; see also \cite{GKS22b}. In this
paper we follow the terminology of preintegration instead of conditioning,
since \cite{GKLS18} forms the foundation of our theory.

\subsection{Related work and other approaches}

The recent paper \cite{BAbdLEcOwenPuch21} used QMC to construct kernel
density estimators and histograms.
Their need to balance the variance and bias means the
provided error convergence rate deteriorates very rapidly with
dimension. The paper \cite{GNR15} replaced the non-smooth functions in
\eqref{eq:cdf-expect} and \eqref{eq:pdf-expect} by smooth approximations
and then applied multilevel Monte Carlo methods. The paper
\cite{BotSalMac19} introduced smooth cdf and pdf estimators for the
specific case of a sum of dependent lognormals, with promising numerical
results using QMC.

As stated earlier,
preintegration is a specific case of conditional sampling or conditioning,
a method widely used in the statistics and computational
finance literature
\cite{ACN13a,ACN13b,BayB-HTemp21,Glasserman,GlaSta01,Hol11,LEcLem00,WWH17}.
The idea to combine a QMC rule with conditioning was first presented in
\cite{LEcLem00}, where it was applied to compute probabilities for a
stochastic activity network. More recently, \cite{Asm18} used conditional
Monte Carlo for density estimation in the specific case of a sum of random
variables. Then, the paper \cite{LEcPuchBAb19} introduced a general conditional
density estimator (CDE) and \cite{PeFuHuLEcTuf21} used conditioning for
variance reduction for a generalised likelihood ratio density estimator,
where both works used Monte Carlo and QMC integration. The pointwise
estimator analysed in this paper is a special case of the QMC CDE in
\cite{LEcPuchBAb19}. That paper also presented a more general CDE, by
allowing conditioning on general sets, as well as considering convex
combinations of CDE's and using QMC within a generalised likelihood ratio
density estimator. To analyse the CDE with QMC, the paper
\cite{LEcPuchBAb19} assumed that after conditioning the result belonged to
the class BVHK,
which allowed the use of the classical Koksma--Hlawka inequality to
bound the QMC error.

\section{Mathematical background}

In this section, we introduce the required background material on
preintegration, QMC, and the function spaces that we need.

We start with some notation. Recall that $y_0$ is the special variable
with respect to which we perform preintegration. We denote the remaining
$d$ variables by $\bsy = (y_1, \ldots, y_d) \in \R^d$ and all of the $d +
1$ variables collectively by $\bsy_{0:d} \coloneqq (y_0, \ldots, y_d) \in
\R^{d + 1}$ or $(y_0, \bsy)$. Similarly, we denote the products of
univariate functions $(\rho_i)_{i = 0}^d$, by
\[
\bsrho(\bsy) \coloneqq \prod_{i = 1}^d \rho_i(y_i)
\quad \text{and} \quad
\bsrho_{0:d}(y_0, \bsy) \coloneqq \prod_{i = 0}^d \rho_i(y_i)
= \rho_0(y_0)\,\bsrho(\bsy)\,.
\]

Let $\N_0 \coloneqq \{0 , 1, 2, \ldots\}$ and $\N \coloneqq \{1, 2,
\ldots\}$ denote the set of natural numbers with and without zero,
respectively. Let $\{0:d\} \coloneqq \{0,1, \ldots, d\}$ and define
$\{1:d\}$ analogously. Let $\bsnu = ({\nu_0, \nu_1}, \ldots, \nu_d) \in
\N_0^{d + 1}$ be a multi-index, and let $|\bsnu| \coloneqq \sum_{i = 0}^d
\nu_i$ denote its \emph{order} and $\supp(\bsnu) \coloneqq \{j \in \{0:d\}
: \nu_j> 0\}$ denote its \emph{support}. Operations and relations between
multi-indices are defined componentwise, e.g., for $\bseta, \bsnu \in
\N_0^{d + 1}$ we write $\bseta \leq \bsnu$ if and only if $\eta_i \leq
\nu_i$ for all $i = 0, 1, \ldots, d$, and addition is defined by $\bseta +
\bsnu = (\eta_i + \nu_i)_{i = 0}^d$. For $\bsy_{0:d}\in\bbR^{d+1}$
and $\bsnu \in \N_0^{d + 1}$, we denote the \emph{active} variables by
$\bsy_\bsnu \coloneqq (y_i : \nu_i > 0)_{i=0}^d$ and the \emph{inactive}
variables by $\bsy_{-\bsnu} \coloneqq (y_i : \nu_i = 0)_{i=0}^d$.
Analogously to the notation $(y_0, \bsy)$, we denote the $(d +
1)$-dimensional concatenation of $\nu_0 \in \N_0$ and $\bsnu = (\nu_1,
\nu_2, \ldots, \nu_d) \in \N_0^d$ by $(\nu_0, \bsnu) = (\nu_0, \nu_1,
\nu_2, \ldots, \nu_d) \in \N_0^{d+1}$.

\subsection{Function spaces}
\label{sec:function_spaces}

Here we introduce our function space setting. Although we deal with both
$(d+1)$- and $d$-variate functions throughout this paper, we give
definitions only for the $(d + 1)$-variate spaces, since the $d$-variate
spaces can be defined analogously by simply excluding the variable $y_0$.

We begin by defining some shorthand notation for mixed partial
derivatives. For $i = 0, 1, \ldots, d$ and a multi-index $\bsnu  \in
\N_0^{d + 1}$, let
\[
 D^i \coloneqq \pd{}{}{y_i}
 \qquad\mbox{and}\qquad
 D^\bsnu = \prod_{i = 0}^d \pd{\nu_i}{}{y_i^{\nu_i}}
\]
denote the first-order derivative and the higher order mixed derivative of
order $\bsnu$, respectively. This notation will also be used for weak
derivatives, where the $\bsnu$th weak derivative of $g$ is defined to be
the distribution $D^\bsnu g$ that satisfies
\begin{align*}
 \int_{\R^{d + 1}} &D^\bsnu g(\bsy_{0:d})\, v(\bsy_{0:d}) \,\rd \bsy_{0:d}
 = (-1)^{|\bsnu|} \int_{\R^{d + 1}} g(\bsy_{0:d})\, D^\bsnu v(\bsy_{0:d}) \,\rd \bsy_{0:d}
\end{align*}
for all $v \in C^\infty_0(\R^{d + 1})$. Here $C^\infty_0(\R^{d + 1})$ is
the space of infinitely differentiable functions with compact support.

Let $C(\R^{d + 1})$ denote the space of continuous functions on $\R^{d +
1}$. For $\bsnu \in \N_0^{d + 1}$ let $C^\bsnu(\R^{d + 1})$ denote the
space of functions with continuous \emph{mixed} derivatives up to $\bsnu$:
\[
C^\bsnu(\R^{d + 1}) \coloneqq \big\{g \in C(\R^{d + 1}) :
D^{\bseta} g \in C(\R^{d + 1})
\text{ for all } {\bseta} \leq \bsnu\}\,.
\]

To provide a function space setting for $\phi$ in \eqref{eq:X}, we
introduce a class of Sobolev spaces of dominating mixed smoothness on
$\R^{d + 1}$, where the behaviour of derivatives as $y_i \to \pm \infty$ is
controlled by functions different from the densities~$\rho_i$. To this
end, for $i=0,1,\ldots,d$, let $\psi_i : \R \to \R$ be a strictly positive
and integrable \emph{weight function}. We denote the whole collection of
weight functions by $\bspsi = (\psi_i)_{i = 0}^d$. Also, let $\bsgamma
\coloneqq \{\gamma_\setu > 0 : \setu \subseteq \{0:d\}\}$ be a collection
of positive real numbers called \emph{weight parameters}; they model the
relative importance of different collections of variables, i.e.,
$\gamma_\setu$ describes the relative importance of the collection of variables
$(y_i : i \in \setu)$. We set $\gamma_\emptyset \coloneqq 1$.

Then for $\bsnu \in \N_0^{d + 1}$, define the \emph{Sobolev space of
dominating mixed smoothness} of order~$\bsnu$, denoted by $\calH^\bsnu_{d
+ 1}$, to be the space of locally integrable functions on $\R^{d + 1}$
such that the norm
\begin{equation*}
\|g\|_{\calH^\bsnu_{d + 1}}^2 \coloneqq
\sum_{\bseta \leq \bsnu} \frac{1}{\gamma_\bseta}
\int_{\R^{d  +1}} |D^{\bseta} g(\bsy_{0:d})|^2\,
\bspsi_{\bseta}(\bsy_\bseta)\,\bsrho_{-\bseta}(\bsy_{-\bseta})
 \,\rd \bsy_{0:d}
\end{equation*}
is finite, where we introduced the shorthand notations
\[
  \gamma_\bseta \coloneqq \gamma_{\supp(\bseta)}, \quad
  \bspsi_{\bseta}(\bsy_\bseta) \coloneqq \prod_{i=0,\,\eta_i \neq 0}^d \psi_i(y_i)
  \quad\mbox{and}\quad
  \bsrho_{-\bseta}(\bsy_{-\bseta}) \coloneqq \prod_{i=0,\,\eta_i = 0}^d \rho_i(y_i).
\]

Recall from the Introduction that we plan to carry out preintegration on a
non-smooth function of $d+1$ variables with appropriate properties, with
the aim of obtaining a smooth function of $d$ variables. We therefore need
(but do not write down) an analogous $d$-variate Sobolev space
$\calH_d^\bsnu$ with variables indexed from $1$ to $d$.

An important property of the Sobolev space of \emph{first-order}
dominating mixed smoothness, i.e., $\calH^{\bsone}_d$ with $\bsone
\coloneqq (1, 1, \ldots, 1)$, is that it is equivalent to the (unanchored)
ANOVA space over the unbounded domain $\R^d$ introduced in \cite{NK14}.
Explicitly, it was shown recently in \cite{GKS22} that if the weight
functions $\psi_i$ satisfy
\begin{equation}
\label{eq:psi}
\int_{-\infty}^\infty \frac{\Phi_i(z)(1 - \Phi_i(z))}{\psi_i(z)} \, \rd z \,< \, \infty
\quad \text{for all } i = 1, 2, \ldots, d,
\end{equation}
then $\calH^{\bsone}_d$ and the ANOVA space from \cite{NK14} are
equivalent. This equivalence is crucial for our analysis, because it
immediately shows that the bounds on the QMC error from \cite{NK14} also
hold in $\calH^{\bsone}_d$ (see Theorem~\ref{thm:qmc} below). Since the
preintegration theory in \cite{GKLS18} is formulated in $\calH^\bsnu_d$,
without this equivalence, there would be a mismatch between the
settings for the analysis of preintegration and QMC methods. With the
equivalence established, we can from now on work exclusively with the
spaces $\calH_{d}^\bsnu$, and have no need to introduce the ANOVA space
from \cite{NK14}. Note that the condition \eqref{eq:psi} is also
assumed throughout \cite{NK14}, where it is required for the QMC error
bounds to hold. Examples of common pairings $(\rho_i, \psi_i)$ satisfying
\eqref{eq:psi} can be found in \cite[Table~3]{KSWW10}. 
Note that $\psi_i^2$ in \cite{KSWW10,NK14} is replaced here, and in \cite{GKS22},
by $\psi_i$.
We assume \eqref{eq:psi} holds throughout.

\subsection{Quasi-Monte Carlo methods}
\label{sec:qmc}

In the classic case of the unit cube, an $N$-point QMC approximation
(see e.g., \cite{DKS13,Nie92}) for the integral of a function $g: [0,1]^d\to \bbR$ is given
by
\[
\frac{1}{N} \sum_{n=0}^{N - 1} g(\bsq_n) \approx
\int_{[0, 1]^d} g(\bsu) \, \rd \bsu \,,
\]
where here the cubature points $\{\bsq_n\}_{n=0}^{N - 1} $ are
deterministically chosen to be well-distributed within $[0, 1)^d$, and to
have desirable approximation properties.

In this paper, we consider a simple class of randomised QMC methods called
\emph{randomly shifted rank-1 lattice rules}, for which the QMC points are
given by
\begin{equation}
\label{eq:lattice}
 \bsq_n = \bigg\{ \frac{n \bsz}{N} + \bsDelta \bigg\}
\quad \text{for } n = 0, 1, \ldots, N - 1.
\end{equation}
Here $\bsz \in \{1, 2, \cdots, N - 1\}^d$ is the \emph{generating vector},
$\bsDelta \in [0, 1)^d$ is a uniformly distributed random shift and
$\{ \cdot \}$ denotes taking the fractional part of each component.

The benefits of randomly shifting the point set are threefold: (i)
the resulting approximation is unbiased; (ii) we can take the average
of the approximations from a small number of i.i.d.\ random shifts as the
final approximation and use the sample variance to estimate the
mean-square error; and (iii) for functions in $\calH_d^\bsone$,
randomly shifted lattice rules with good $\bsz$ can be constructed
efficiently (see below) to achieve nearly $\calO(N^{-1})$ convergence
of the root-mean-square error (RMSE).

To approximate an integral over an unbounded domain, one must map the
point set $\{\bsq_n\}_{n=0}^{N - 1}$ from the unit cube to $\R^d$. In the
case of an integral with respect to a product of densities,
as we have in \eqref{eq:cdf-expect}, we can perform this
mapping by applying the inverse cdf componentwise.
An $N$-point QMC approximation for the integral of a
function $g: \bbR^d\to\bbR$ is then given by
\begin{equation}
\label{eq:qmc}
Q_{d, N} (g)\coloneqq \frac{1}{N} \sum_{n=0}^{N - 1} g(\bsPhi^{-1}(\bsq_n))
\approx \int_{[0,1]^d} g(\bsPhi^{-1}(\bsu)) \,\rd \bsu
 = \int_{\R^d} g(\bsy)\,\bsrho(\bsy) \, \rd \bsy\,,
\end{equation}
where $\bsPhi^{-1}$ denotes the application of the inverse cdf $\Phi_i^{-1}$
in each dimension $i$, recalling that $\Phi_i$ is the cdf of the density
$\rho_i$. For the remainder of the paper we only consider approximating
integrals on $\R^d$, and so we denote the transformed QMC points by
\begin{align} \label{eq:transQMC}
 \bstau_n = \bsPhi^{-1}(\bsq_n) \in \R^{d}
 \quad \text{for } n = 0, 1, \ldots, N - 1.
\end{align}

It was proved in \cite{NK14} that good generating vectors $\bsz$ for the
approximation \eqref{eq:qmc} can be constructed using a
component-by-component (CBC) algorithm to achieve almost the optimal
convergence rate {for the RMSE} in a certain \emph{first-order ANOVA space} (which as we
have discussed is equivalent to $\calH^\bsone_d$). Below, we restate the
error bound from \cite{NK14}, but now in terms of $\calH_d^{\bsone}$
rather than the equivalent ANOVA space used in~\cite{NK14}.

\begin{theorem} \label{thm:qmc}
Suppose \eqref{eq:psi} holds. Let $\omega \in (1,2]$ and $c <\infty$ be
such that
\begin{equation}
\label{eq:omega}
 \frac{1}{\pi^2k^2} \int_{-\infty}^\infty \frac{\sin^2(\pi\,k\, \Phi_i(y))}{\psi_i(y)}  \, \rd y
\,\leq\, \frac{c}{|k|^{\omega}}
\quad \text{for all } k \in \Z \setminus \{0\} \mbox{ and all } i = 1, \ldots, d\,.
\end{equation}
Let $N \in \N$ and suppose that  $\bsz$ is a generating vector constructed
using the CBC algorithm from \cite{NK14}. Then for $g \in
\calH_d^{\bsone}$, the RMSE (with the expectation
taken with respect to the random shift $\bsDelta$) of the randomly
shifted lattice rule approximation \eqref{eq:qmc} corresponding to $\bsz$
satisfies
\begin{equation}
\label{eq:cbc-err}
\sqrt{\bbE_\bsDelta \bigg[\bigg|\int_{\R^d} g(\bsy) \bsrho(\bsy) \, \rd \bsy - Q_{d, N} (g)\bigg|^2\bigg]}
\le C_{d, \bsgamma, \lambda}\,[\phi_\mathrm{tot}(N)]^{-1/(2\lambda)}\,
\|g\|_{\calH_d^{\bsone}}
\end{equation}
for all $\lambda \in (1/\omega, 1]$, with
\[
 C_{d, \bsgamma, \lambda} := \bigg(\sum_{\bszero \neq \bseta \in \{0, 1\}^d} \gamma_\bseta^\lambda\,
 \big[2\, c\, \zeta ( \omega \lambda)\big]^{|\bseta|}\bigg)^{1/(2\lambda)},
\]
where $\phi_\mathrm{tot}$ is the Euler totient function and $\zeta$ is the
Riemann zeta function.
\end{theorem}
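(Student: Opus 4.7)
The plan is to obtain this essentially as a direct consequence of the CBC error analysis of \cite{NK14}, with the only substantive work being a translation of the function-space setting from the first-order unanchored ANOVA space of \cite{NK14} to our Sobolev space $\calH_d^{\bsone}$ of dominating mixed smoothness. Since the theorem makes no claim about the CBC construction itself, I would not re-derive any of the lattice rule worst-case machinery; I would instead cite it and concentrate on the norm identification.

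First I would recall the main CBC theorem of \cite{NK14}. Under the integrability assumption \eqref{eq:psi}, which ensures the transformed shift-invariant kernel has integrable Fourier coefficients, and the decay condition \eqref{eq:omega}, which controls those coefficients at rate $|k|^{-\omega}$, the CBC algorithm of \cite{NK14} produces a generating vector $\bsz$ for which the randomly shifted lattice rule $Q_{d,N}$ from \eqref{eq:qmc} satisfies an RMSE bound of exactly the form in \eqref{eq:cbc-err}, but with $\|g\|_{\calH_d^{\bsone}}$ replaced by the norm of $g$ in the unanchored ANOVA space over $\R^d$ associated with $\bsrho$, $\bspsi$, and $\bsgamma$. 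The constant $C_{d,\bsgamma,\lambda}$ arises from summing, over all nonempty subsets $\bseta \in \{0,1\}^d$, the weighted Jensen bound $[2c\,\zeta(\omega\lambda)]^{|\bseta|}\gamma_\bseta^{\lambda}$ generated by the worst-case analysis when the cubature points are mapped to $\R^d$ through componentwise inverse cdfs.

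Next I would invoke the norm equivalence proved in \cite{GKS22}: assuming \eqref{eq:psi}, the space $\calH_d^{\bsone}$ (as defined in Section~\ref{sec:function_spaces}) coincides as a set with the unanchored ANOVA space of \cite{NK14} over $\R^d$, and the two norms are equivalent, with the precise two-sided constants determined by the pairings $(\rho_i,\psi_i)$. Combining this equivalence with the ANOVA-form bound just recalled gives \eqref{eq:cbc-err} for $g \in \calH_d^{\bsone}$, with $\phi_{\mathrm{tot}}(N)^{-1/(2\lambda)}$ coming directly from the CBC analysis and the sum in $C_{d,\bsgamma,\lambda}$ coming from matching the weighted ANOVA decomposition term-by-term with the subset sum appearing in $\|g\|_{\calH_d^{\bsone}}^2$.

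The main (and really only) obstacle is to confirm that no spurious equivalence constants appear in $C_{d,\bsgamma,\lambda}$, so that the bound is stated with the same explicit constant as in \cite{NK14}. This boils down to a careful bookkeeping step: the norm on $\calH_d^{\bsone}$ sums, over $\bseta \le \bsone$, the weighted $L^2$ norms of $D^{\bseta}g$ against $\bspsi_{\bseta}\,\bsrho_{-\bseta}$ with prefactor $1/\gamma_\bseta$, and one must verify that this matches the ANOVA norm used in \cite{NK14} with the scaling convention that $\psi_i^2$ there is replaced by $\psi_i$ here, as already flagged in the discussion preceding the theorem. Once this identification is made, the theorem follows by citation.
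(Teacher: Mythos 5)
Your proposal is correct and follows essentially the same route as the paper: cite the CBC error bound of \cite{NK14} (Theorem~8 there) in the ANOVA space $\calW_d$, then transfer it to $\calH_d^{\bsone}$ via the equivalence of \cite[Theorem~13]{GKS22}. The one obstacle you flag (spurious equivalence constants) is resolved exactly as you hope, since the equivalence holds with $\|g\|_{\calW_d} \le \|g\|_{\calH_d^{\bsone}}$, i.e.\ with constant one in the direction needed.
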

\begin{proof}
Let $\calW_d$ denote the ANOVA space from \cite{NK14}. Theorem 8 from
\cite{NK14} gives the error bound \eqref{eq:cbc-err} for $g \in
\calW_d$ and with the $\calW_d$-norm on the right. Since
\cite[Theorem~13]{GKS22} shows that $\calW_d$ and $\calH_d^\bsone$ are
equivalent under assumption \eqref{eq:psi} on the weight functions, with
$\|g\|_{\calW_d} \leq \|g\|_{\calH_d^{\bsone}}$, the result is proved.
\end{proof}

Observe that the convergence rate $N^{-1/(2\lambda)}$ of the RMSE
is governed by the parameter $\omega \in (1,2]$ from
\eqref{eq:omega}, which in turn depends on the interaction between the
pairs $(\rho_i,\psi_i)$. Ideally, we would like $\omega$ to be
arbitrarily close to 2, which would allow us to take $\lambda$ arbitrarily
close to $1/2$, giving a convergence rate arbitrarily close to $1/N$.
However, this is not always possible.
Table~3 in \cite{KSWW10} gives values of $r_2 = \omega/2$ such that
\eqref{eq:omega} holds for several pairs $(\rho_i,\psi_i)$, and in
particular, it provides common examples for which $\omega \approx 2$.
Note that, as before, $\psi_{i}$ in this paper is $\psi_{i}^{2}$ in \cite{KSWW10}.
As an example, if each $\rho_i$ is a standard normal 
density, then one can take either $\psi_i(y) = e^{-|y|}$ or $\psi_i = 1/(1 + |y|)$,
resulting in $\omega \approx 2$.
Alternatively, one can take a scaled normal density,
$\psi_i(y) = e^{-y^2/(2\eta)}$ for $\eta > 1$, giving $ \omega = 2(1 - 1/\eta)$,
which for $\eta$ sufficiently large will give a convergence rate close to $1/N$.
In general, the essential feature is that $\psi_i$ decays more slowly than $\rho_i$.

\subsection{Lagrange interpolation in one dimension} \label{sec:1D-approx}

There are two steps to the cdf and pdf estimation algorithms: pointwise
approximation, which we do using a QMC rule after a preintegration step,
and then interpolation on the interval $[a, b]$. For the latter step, we
use Lagrange interpolation based on Chebyshev points in $[a, b]$.

Let $\{t_m\}_{m = 0}^M$ be a collection of distinct points in $[a, b]$ and
let $V_M$ denote the set of all polynomials up to degree $M$ on $[a, b]$.
The \emph{Lagrange interpolation operator} $L_M : C[a, b] \to V_M$ is
given by
\begin{equation}
\label{eq:L_M}
L_M g \coloneqq \sum_{m = 0}^M g(t_m)\, \chi_{M,m},
\qquad
\chi_{M, m}(t) \coloneqq \prod_{\substack{\ell = 0\\ \ell \neq m}}^M
\frac{t - t_\ell}{t_m - t_\ell}\,.
\end{equation}

We now state the classical error bounds for Lagrange interpolation based
on Chebyshev points from, e.g., \cite{Tref20}. For $\sigma \in \N$, let
$W^{\sigma, \infty}[a, b]$ denote the Sobolev space of functions on $[a,
b]$ with essentially bounded derivatives up to order $\sigma$, which we
equip with the norm $\|g\|_{W^{\sigma, \infty}} \coloneqq \max_{q = 0, 1,
\ldots, \sigma} \|g^{(q)} \|_{L^\infty}$.
Let $\sigma \in \N$ and suppose that $g \in W^{\sigma + 1, \infty}[a, b]$.
Then for $M > \sigma$, the error of the Lagrange interpolant based on
Chebyshev nodes satisfies
\begin{equation}
\label{eq:interp_err}
\|g - L_M g\|_{L^\infty} \le
\frac{4\,\|g^{(\sigma + 1)}\|_{L^1}}{\pi \sigma (M - \sigma)^\sigma}\,.
\end{equation}
The original result \cite[Theorem~7.2]{Tref20} was stated in terms of the
\emph{total variation} of $g^{(\sigma)}$ on $[a, b]$, which for $g \in
W^{\sigma + 1, \infty}[a, b]$ is given by $\|g^{(\sigma + 1)} \|_{L^1}$.
As we will see in Section~\ref{sec:regularity}, under our assumptions on
$\phi$, the cdf and pdf are smooth enough to take $\sigma$ up to $d - 1$.

One may also use other methods to approximate $F$ and $f$ on $[a, b]$,
such as splines or best polynomial approximation, but we do not pursue
those directions here.

\section{Smoothing by preintegration} \label{sec:preint}

As explained in the introduction, smoothing by preintegration is a method
of smoothing a discontinuous or kink function by integrating out a single,
specially chosen variable. It is a special case of conditional sampling.
For notational convenience we take $y_0$ to be
this special variable. In this section, we formalise the preintegration
step for indicator functions by following \cite{GKLS18}, and then extend
the method to simple distributions involving $\delta$ distributions, which
will allow us to also apply the preintegration technique to approximate
the pdf as formulated in \eqref{eq:pdf-expect}.

First, we make the following assumptions about the function $\phi$ in
\eqref{eq:X}.

\begin{assumption} \label{asm:phi}
For $d\ge 1$ and $\bsnu\in \bbN_0^d$, let $\phi : \bbR^{d+1}\to\bbR$
satisfy
\begin{enumerate}
\label{itm:mono}\item $D^0\phi(y_0, \bsy) > 0$ for all $ (y_0, \bsy) \in \R^{d + 1}$;
    and
\item for each $\bsy \in \R^d$, $\phi(y_0, \bsy) \to \infty$ as $y_0
    \to \infty$; and
\item $\phi \in \calH^{(\nu_0, \bsnu)}_{d + 1} \cap C^{(\nu_0,
    \bsnu)}(\R^{d + 1})$, where $\nu_0 \coloneqq |\bsnu|+1$.
\end{enumerate}
Additionally, suppose that $\rho_0 \in C^{|\bsnu|}(\bbR)$.
\end{assumption}

It was unresolved from the analysis in \cite{GKS13,GKS17,GKLS18} whether
the \emph{monotonicity} assumption (item~\ref{itm:mono} above) is
necessary. Recently it was proved in \cite{GKS22b} that this is indeed
necessary: if it fails then there may remain a singularity after
preintegration.

\subsection{Smoothing by preintegration for indicator functions}

Motivated by the cdf \eqref{eq:cdf-expect}, for $t\in [a,b]$ we define a
discontinuous function $g_t: \R^{d + 1} \to \R$ by
\begin{equation}
\label{eq:g-ind}
 g_t(y_0, \bsy) \coloneqq \ind\big(t-\phi(y_0, \bsy)\big)\,,
\qquad y_0\in\bbR\,,\;\bsy\in\bbR^d\,,
\end{equation}
where $\phi:\bbR^{d+1}\to\bbR$ satisfies Assumption~\ref{asm:phi}. Note
that \cite{GKLS18} considered functions of the more general form
$g(y_0,\bsy) = \theta(y_0,\bsy)\, \ind(\phi(y_0,\bsy))$, where both
$\theta$ and~$\phi$ are sufficiently smooth, a formulation that allows for
more general discontinuities and kinks. However, since we are here only
concerned with computing probabilities using \eqref{eq:cdf-expect}, the
restricted form in \eqref{eq:g-ind} is sufficient. Also, note that we
consider here shifted indicator functions $\ind(t - \cdot)$ instead of
$\ind(\cdot)$ as in \cite{GKLS18}. This results only in minor changes in
the presentation, and does not affect any of the theory.

Since in Assumption~\ref{asm:phi} we assume that $\phi(\cdot, \bsy)$ is
strictly increasing with respect to $y_0$ for fixed $\bsy \in \R^d$, and
also tends to $\infty$ as $y_0 \to \infty$, there are only two possible
scenarios: either the discontinuity of $g_t(\cdot,\bsy)$ for fixed $t\in
[a,b]$ and $\bsy \in \R^d$ occurs at a unique point in dimension $0$, or
the discontinuity does not occur at all because $\phi(y_0,\bsy)>t$ for all
$y_0 \in \R$.

To describe the former case more explicitly, for given $t \in [a,b]$, we
define the set of $\bsy \in \R^d$ for which the discontinuity occurs by
\begin{equation}
\label{eq:Ut}
 U_t \coloneqq \big\{\bsy \in \R^d : \phi(y_0, \bsy) = t
\text{ for some } y_0 \in \R\big\}\,,
\end{equation}
which (unlike in \cite{GKLS18}) now depends on the point $t$.
Since $\supp (\rho_0) = \R$, the point $y_0$ in \eqref{eq:Ut} is in the support of $\rho_0$.
We have the following equivalence
\[
\bsy \in U_t \; \iff \; t \in \phi(\R, \bsy)\,,
\]
where with a slight abuse of notation we use $\phi(\R, \bsy)$ to denote
the image of $\R$ under $\phi(\cdot, \bsy)$. For $t \in [a, b]$ and $\bsy
\in U_t$, the point at which the discontinuity occurs in dimension~$0$ is
denoted by $\xi(t, \bsy)$, i.e., $\xi(t, \bsy) \in \R$ is the unique real
number such that
\begin{equation}
\label{eq:xi}
 \phi(\xi(t, \bsy), \bsy) = t\,.
\end{equation}
Here uniqueness follows from the monotonicity condition
Assumption~\ref{asm:phi} item~\ref{itm:mono}. Similarly, because $\phi(\cdot,\bsy)$ is
increasing, it follows from \eqref{eq:xi} that for $\bsy \in U_t$,
\begin{equation}
\label{eq:phi-xi-increasing}
\phi(y_0,\bsy) < t \; \iff\; y_0 < \xi(t,\bsy).
\end{equation}

The following Implicit Function Theorem adapted from \cite{GKS13} shows
that $\xi$ is a well-defined function of both $t$ and $\bsy$, and implies
that $\xi$ ``inherits'' the smoothness of~$\phi$.

\begin{theorem}
\label{thm:implicit}
Let $d \geq 1$, $\bsnu \in \N_0^d$, and $[a, b] \subset \R$.
Suppose that $\phi$ and $\rho_0$ satisfy Assumption~\ref{asm:phi}, and
define
\begin{equation}
\label{eq:V}
V \coloneqq \big\{(t,\bsy) \in (a, b) \times \R^d : \phi(y_0, \bsy)  = t
\text{ for some } y_0 \in \R\big\} \,\subset\, [a, b] \times \R^d .
\end{equation}
If $V$ is not empty, then there exists a unique function $\xi \in C^{(\nu_0, \bsnu)}(\overline{V})$
satisfying \eqref{eq:xi} for all $(t, \bsy) \in \overline{V}$.
Furthermore, for $(t, \bsy) \in V$ the first-order derivatives are given by
\begin{align}
\label{eq:Di-xi}
  D^i \xi(t,\bsy) = \frac{\partial}{\partial y_i} \xi(t,\bsy)
  &= - \frac{D^i \phi(\xi(t,\bsy),\bsy)}{D^0 \phi(\xi(t,\bsy),\bsy)}
  \qquad\mbox{for all } i=1,\ldots,d, \quad \text{and}
  \\
  \label{eq:dt-xi}
  \frac{\partial}{\partial t} \xi(t,\bsy)
  &= \frac{1}{D^0 \phi(\xi(t,\bsy),\bsy)}.
\end{align}
\end{theorem}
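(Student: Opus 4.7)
The plan is to proceed in three stages: establish pointwise existence and uniqueness of $\xi$ on $V$, apply the classical implicit function theorem locally to obtain $C^1$-regularity together with the first-order formulas, and then bootstrap to the claimed mixed smoothness by induction.

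First I would fix $(t, \bsy) \in V$ and use Assumption~\ref{asm:phi} items~1 and~2 to observe that $y_0 \mapsto \phi(y_0, \bsy)$ is continuous, strictly increasing, and tends to $+\infty$ as $y_0 \to \infty$. Hence its range is the open interval $\bigl(\inf_{y_0 \in \R} \phi(y_0, \bsy), +\infty\bigr)$, which by the definition of $V$ contains $t$. The intermediate value theorem supplies some $y_0 \in \R$ with $\phi(y_0, \bsy) = t$, and strict monotonicity forces it to be unique; this defines $\xi(t, \bsy)$ pointwise on $V$. To obtain local $C^1$-smoothness I would then apply the classical implicit function theorem to $F(y_0; t, \bsy) := \phi(y_0, \bsy) - t$: by Assumption~\ref{asm:phi} item~3 the map $F$ is $C^1$, and the non-degeneracy condition $\partial_{y_0} F = D^0 \phi > 0$ holds globally. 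The local solution produced must coincide with the pointwise $\xi$ already constructed. Differentiating the identity $\phi(\xi(t, \bsy), \bsy) = t$ with respect to $y_i$ and to $t$ via the chain rule immediately yields \eqref{eq:Di-xi} and \eqref{eq:dt-xi}.

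To upgrade to the full mixed smoothness $C^{(\nu_0, \bsnu)}$, I would induct on the order $(\mu_0, \bsmu) \leq (\nu_0, \bsnu)$ of the desired mixed derivative of $\xi$. The first-order formulas express $\partial_t \xi$ and $D^i \xi$ as rational functions in derivatives of $\phi$ evaluated at $(\xi(t, \bsy), \bsy)$, with denominator a positive power of $D^0 \phi$. Iteratively differentiating these expressions in $t$ or some $y_i$ and applying the chain rule, every mixed derivative $\partial_t^{\mu_0} D^{\bsmu}_{\bsy} \xi$ can be written as such a rational combination in finitely many mixed derivatives $D^{(\eta_0, \bseta)} \phi$. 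Continuity on $V$ then follows from $D^0 \phi > 0$ and the continuity of the $\phi$-derivatives assumed in Assumption~\ref{asm:phi}~item~3; continuous extension to $\overline{V}$ uses that $\phi \in C^{(\nu_0, \bsnu)}(\R^{d+1})$ and that $D^0 \phi$ is continuous and positive on $\R^{d+1}$.

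The main obstacle is the combinatorial bookkeeping in the third step: one must verify that the $y_0$-order and the $y_i$-orders of all derivatives of $\phi$ appearing in the rational expressions for $\partial_t^{\mu_0} D^{\bsmu}_{\bsy} \xi$ stay within $\nu_0 = |\bsnu|+1$ and $\nu_i$ respectively. The specific choice $\nu_0 = |\bsnu|+1$ in Assumption~\ref{asm:phi} is precisely what is needed to close this induction, since each differentiation in $t$ or $y_i$ activates the chain rule through $\xi$ and raises the effective $y_0$-order of the $\phi$-factors. Making the argument fully rigorous amounts to a Faà di Bruno--type expansion, and is an adaptation of the implicit function theorem proved in \cite{GKS13}.
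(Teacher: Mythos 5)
Your proposal is correct and follows essentially the same route as the paper: the paper simply invokes \cite[Theorem~2.3]{GKS13} applied to $\phi(y_0,\bsy)-t$ (treating $t$ as an extra variable) and notes the two needed adaptations (locality in $t$ and passage from isotropic to dominating mixed smoothness), then obtains \eqref{eq:Di-xi} and \eqref{eq:dt-xi} by differentiating \eqref{eq:xi}, exactly as you do. Your sketch reconstructs the internal structure of that cited implicit-function-theorem argument (monotonicity for pointwise uniqueness, classical IFT for $C^1$, Fa\`a di Bruno--type bootstrap with the budget $\nu_0=|\bsnu|+1$), but like the paper it ultimately defers the combinatorial verification to \cite{GKS13}.
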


\begin{proof}
The result $\xi \in C^{(\nu_0, \bsnu)}(\overline{V})$ follows by applying
\cite[Theorem~2.3]{GKS13} to the function $\phi(y_0, \bsy) - t$, with $j =
1$ along with the variables labelled as $x_{i + 1} = y_{i}$ for $i = 0, 1,
\ldots, d$ and $x_{d + 2} = t$. Since the proof of
\cite[Theorem~2.3]{GKS13} is based on a local argument about an arbitrary
point $\bsx$, the restriction $x_{d + 2} = t \in (a, b)$, instead of $\R$,
does not affect the result. Additionally, the original proof was conducted
for the isotropic smoothness spaces, but it can easily be extended to the
dominating mixed smoothness space $C^{(\nu_0, \bsnu)}(\overline{V})$.
Finally, differentiating \eqref{eq:xi} with respect to $y_i$ leads to the
formula \eqref{eq:Di-xi}. Similarly, differentiating \eqref{eq:xi} with
respect to $t$ implies \eqref{eq:dt-xi}.
\end{proof}

With $P_0$ the preintegration operator defined by \eqref{eq:P0}, we now
apply preintegration to the function $g_t$ defined by \eqref{eq:g-ind} for
$t\in [a,b]$, obtaining
\begin{equation*}
 P_0 g_t(\bsy) = \int_{-\infty}^\infty \ind\big(t-\phi(y_0, \bsy)\big)\, \rho_0(y_0) \, \rd y_0\, .
\end{equation*}
From the definition of $\xi(t, \bsy)$ in \eqref{eq:xi} and the
relation \eqref{eq:phi-xi-increasing}, for $\bsy \in U_t$ we can write
\begin{align} \label{eq:P0g-ind}
 P_0 g_t(\bsy)
 = \int_{-\infty}^{\xi(t, \bsy)} \rho_0(y_0) \, \rd y_0
 = \Phi_0(\xi(t, \bsy))\,,
\end{align}
while for $\bsy \in \R^d \setminus U_t$ we have $P_0 g_t \equiv 0$. In
both cases there is no longer any discontinuity.

The main result from \cite[Theorem~3]{GKLS18} showed that if $\phi$
satisfies Assumption~\ref{asm:phi}, along with some extra technical
conditions in Assumption~\ref{asm:phi-h} below, then the preintegrated
function is \emph{as smooth as $\phi$}. The technical conditions are
required to control all of the terms that arise when differentiating $P_0
g_t $ using the chain rule.

As a first illustration, for any $i\in \{1:d\}$ we have
\begin{align} \label{eq:first-der}
  D^i [P_0 g_t(\bsy)]
  = D^i [\Phi_0(\xi(t,\bsy))]
  = \rho_0(\xi(t,\bsy))\,D^i\xi(t,\bsy)
  = -\frac{\rho_0(\xi(t,\bsy))\,D^i\phi(\xi(t,\bsy),\bsy)}{D^0\phi(\xi(t,\bsy),\bsy)},
\end{align}
where we used \eqref{eq:Di-xi}. This motivates the general form of
functions in Assumption~\ref{asm:phi-h} below. Our assumption is
formulated differently from \cite{GKLS18} because we need to account for
the $t$ dependence in this paper and we also aim to give a tight estimate
on the number of terms that arise from the differentiation. This allows us
in Theorem~\ref{thm:preint_ind} below to extend \cite[Theorem~3]{GKLS18}
by providing an explicit bound on the norm. We use $\bse_i$ to denote a
multi-index whose $i$th component is $1$ and all other components are $0$.

\begin{assumption} \label{asm:phi-h}
Let $d\ge 1$, $\bsnu\in \bbN_0^d$, $[a,b]\subset\bbR$, and suppose that
$\phi$ and $\rho_0$ satisfy Assumption~\ref{asm:phi}. Recall the
definitions of $U_t$, $\xi$ and $V$ in \eqref{eq:Ut}, \eqref{eq:xi} and
\eqref{eq:V}, respectively.
Given $q\in\bbN_0$ and $\bseta\le\bsnu$ satisfying $|\bseta|+q \le
|\bsnu|+1$, we consider functions $h_{q,\bseta}: \overline{V} \to \R$ of
the form
\begin{align} \label{eq:h-form}
 \begin{cases}
 h_{q, \bseta}(t, \bsy) = h_{q,\bseta,(r,\bsalpha,\beta)}(t,\bsy)
 \coloneqq
 \displaystyle\frac{(-1)^r \rho_0^{(\beta)}(\xi(t,\bsy))\,
       \prod_{\ell=1}^{r} D^{\bsalpha_\ell} \phi(\xi(t,\bsy),\bsy)}
      {[D^0\phi(\xi(t,\bsy),\bsy)]^{r+q}},
\\[4mm]
 \mbox{with $r\in\bbN_0$, $\bsalpha=(\bsalpha_\ell)_{\ell=1}^r$,
 $\bsalpha_\ell\in \N_0^{d + 1}\!\setminus\!\{\bse_0,\bszero\}$, $\beta \in \N_0$ satisfying}
 \\[1mm]
 r\le 2|\bseta|+q-1,\;
 \alpha_{\ell,0} \le |\bseta|+q, \;
 \beta \leq |\bseta|+q-1,\;
 \beta\bse_0 + \displaystyle\sum_{\ell = 1}^{r} \bsalpha_\ell
 = (r+q-1, \bseta).
 \end{cases}
 \hspace{-0.8cm}
\end{align}
We assume that all such functions $h_{q,\bseta}$ satisfy
\begin{equation} \label{eq:h-lim}
 \lim_{\bsy \to \partial U_t} h_{q,\bseta}(t, \bsy) = 0
 \quad \text{for all } t \in [a, b],
\end{equation}
and there is a constant $B_{q,\bseta}$ such that
\begin{equation} \label{eq:h-int}
 \sup_{t \in [a, b]}
 \int_{U_t} |h_{q,\bseta}(t, \bsy)|^2 \,
 \bspsi_{\bseta}(\bsy_\bseta)\,\bsrho_{-\bseta}(\bsy_{-\bseta})\,\rd \bsy
 \le B_{q,\bseta} \,<\, \infty.
\end{equation}
\end{assumption}

It is worthwhile to briefly discuss Assumption~\ref{asm:phi-h}. As we
will see in the theorem below, the functions $h_{q, \bseta}$ occur when
differentiating a preintegrated function using the multivariate chain and
product rules. Loosely speaking, the parameter $q$ relates to
differentiating with respect to $t$, whereas $\bseta$ relates to
differentiating with respect to $\bsy$. The conditions \eqref{eq:h-lim}
and \eqref{eq:h-int} then ensure that the derivatives are well-behaved
enough for the preintegrated function to be sufficiently smooth.
It follows from Assumption~\ref{asm:phi} and Theorem~\ref{thm:implicit} that each
function $h_{q, \bseta}$ of the form \eqref{eq:h-form} is continuous on
$\overline{V}$.
For an appropriate choice of $\{\psi_i\}$, Assumption~\ref{asm:phi-h}
will hold for functions $\phi$ for which the preintegrated function
$P_0 g_t$ as in \eqref{eq:P0g-ind} (and also \eqref{eq:P0_dirac} below) is unbounded and 
has unbounded derivatives $h_{q, \bseta}$.
In this case, since $P_0 g_t$ is unbounded, after mapping
back to $[0, 1]^d$ it does not belong to BVHK.

Having introduced preintegration and our key assumptions, we now state
the main smoothing by preintegration theorem for functions of the form
\eqref{eq:g-ind}. It is a refined version of \cite[Theorem~3]{GKLS18}.

\begin{theorem} \label{thm:preint_ind}
Let $d\ge 1$, $\bsnu\in \bbN_0^d$, and $[a,b]\subset\bbR$. Suppose that
$\phi$ and $\rho_0$ satisfy Assumption~\ref{asm:phi} and
Assumption~\ref{asm:phi-h} for $q=0$ and all $\bszero \ne
\bseta\le\bsnu$. Then for $t \in [a, b]$, the function
\[
  g_t(y_0,\bsy) \coloneqq \ind(t-\phi(y_0, \bsy))
  \quad\mbox{satisfies}\quad
  P_0 g_t \in \calH^{\bsnu}_{d} \cap C^{\bsnu}(\R^d),
\]
with its $\calH^\bsnu_d$-norm bounded uniformly in $t$,
\begin{align} \label{eq:norm-cdf}
 \sup_{t \in [a, b]} \|P_0 g_t\|_{\calH^\bsnu_d}
 \le \Bigg(1 +
 \sum_{\bszero\ne\bseta \leq \bsnu}
 \frac{\big(8^{|\bseta|-1}(|\bseta|-1)!\big)^2 B_{0, \bseta}}{\gamma_\bseta}\Bigg)^{1/2}
 \,<\, \infty\,.
\end{align}
\end{theorem}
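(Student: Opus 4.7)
The plan is to start from the explicit formula $P_0 g_t(\bsy) = \Phi_0(\xi(t,\bsy))$ on $U_t$ and $P_0 g_t \equiv 0$ on $\R^d\setminus U_t$ from \eqref{eq:P0g-ind}, apply the chain and quotient rules inductively on $|\bseta|$ to write $D^\bseta P_0 g_t$ on $U_t$ as a finite sum of functions of the form $h_{0,\bseta}$ from Assumption~\ref{asm:phi-h} (with $q=0$), use the boundary vanishing condition \eqref{eq:h-lim} to extend these derivatives continuously by zero across $\partial U_t$, and finally invoke the integrability bound \eqref{eq:h-int} to control the $\calH^\bsnu_d$-norm.

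The base case $|\bseta|=1$ is the computation \eqref{eq:first-der}, which yields a single term of the form $h_{0,\bse_i,(r,\bsalpha,\beta)}$ with $r=1$, $\bsalpha=(\bse_i)$, $\beta=0$. For the inductive step from $\bseta$ to $\bseta+\bse_j$ with $j\in\{1,\ldots,d\}$, apply $D^j$ to a generic term $h_{0,\bseta,(r,\bsalpha,\beta)}$, using $D^j\xi(t,\bsy)=-D^j\phi(\xi,\bsy)/D^0\phi(\xi,\bsy)$ from \eqref{eq:Di-xi}. Differentiating $\rho_0^{(\beta)}(\xi)$ produces one new term with parameters $(r+1,(\bsalpha_1,\ldots,\bsalpha_r,\bse_j),\beta+1)$; each of the $r$ factors $D^{\bsalpha_\ell}\phi(\xi,\bsy)$ contributes two new terms (one with $\bsalpha_\ell$ replaced by $\bsalpha_\ell+\bse_j$ and $r$ unchanged, one with $\bsalpha_\ell$ replaced by $\bsalpha_\ell+\bse_0$ together with an appended $\bse_j$ factor and $r$ incremented by one); and the power $[D^0\phi]^{-r}$ contributes two further terms from the chain rule applied to $D^0\phi(\xi,\bsy)$. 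A direct bookkeeping check verifies that each of the resulting $2r+3$ terms still obeys the constraints in \eqref{eq:h-form} for $(\bseta+\bse_j,0)$. Since $r\le 2|\bseta|-1$ is preserved at each stage, the multiplicative factor per iteration is at most $4|\bseta|+1$, so the total count $N_\bseta$ of terms after $|\bseta|-1$ iterations is bounded by $\prod_{k=1}^{|\bseta|-1}(4k+1)\le 8^{|\bseta|-1}(|\bseta|-1)!$.

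To pass from classical derivatives on $U_t$ to weak derivatives on all of $\R^d$, observe that each $h_{0,\bseta}$ is continuous on $U_t$ by Assumption~\ref{asm:phi} together with Theorem~\ref{thm:implicit}, and extends continuously by zero to $\overline{U_t}$ via \eqref{eq:h-lim}. Hence the sum $D^\bseta P_0 g_t|_{U_t}$ extends continuously by zero to all of $\R^d$. Testing against a $v\in C_0^\infty(\R^d)$ and splitting the integration across $\partial U_t$, the resulting boundary contributions vanish by \eqref{eq:h-lim}, so the extension is indeed the weak derivative of $P_0 g_t$, which simultaneously establishes $P_0 g_t\in C^\bsnu(\R^d)$.

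For the norm bound \eqref{eq:norm-cdf}, the $\bseta=\bszero$ term contributes at most $1$ since $0\le P_0 g_t\le 1$ and $\int_{\R^d}\bsrho(\bsy)\,\rd\bsy=1$. For $\bszero\ne\bseta\le\bsnu$, Cauchy--Schwarz gives $|D^\bseta P_0 g_t|^2 \le N_\bseta \sum_{\text{terms}}|h_{0,\bseta}|^2$, and integrating against $\bspsi_\bseta\bsrho_{-\bseta}$ over $U_t$ (the extended derivative being zero elsewhere) and applying \eqref{eq:h-int} to each summand bounds the $\bseta$-contribution by $N_\bseta^2 B_{0,\bseta}/\gamma_\bseta \le \bigl(8^{|\bseta|-1}(|\bseta|-1)!\bigr)^2 B_{0,\bseta}/\gamma_\bseta$. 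Summing over $\bseta\le\bsnu$ and taking a square root yields \eqref{eq:norm-cdf}, uniformly in $t\in[a,b]$ because the estimate \eqref{eq:h-int} is uniform in $t$. The main obstacle is the combinatorial bookkeeping in the inductive differentiation step: one must carefully track how the three parameters $r$, $\bsalpha$, and $\beta$ evolve under each new derivative, both in order to verify that the constraints in Assumption~\ref{asm:phi-h} persist and in order to obtain a sufficiently explicit enumeration to yield the factor $\bigl(8^{|\bseta|-1}(|\bseta|-1)!\bigr)^2$ in the final norm bound.
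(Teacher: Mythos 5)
Your proposal follows essentially the same route as the paper: the explicit formula \eqref{eq:P0g-ind}, an induction on $|\bseta|$ expressing $D^\bseta P_0 g_t$ on $U_t$ as a sum of functions of the form \eqref{eq:h-form} with $q=0$, continuity across $\partial U_t$ via \eqref{eq:h-lim}, and Cauchy--Schwarz plus \eqref{eq:h-int} for the norm bound; the paper merely outsources the differentiation bookkeeping to Lemma~\ref{lem:count-y}. One small inaccuracy in your count: differentiating the factor $[D^0\phi(\xi,\cdot)]^{-r}$ produces two chain-rule terms each carrying a scalar coefficient $r$, and since the form \eqref{eq:h-form} admits no scalar prefactor these must be counted with multiplicity $r$ (otherwise your final Cauchy--Schwarz step, which presumes unit coefficients, is not valid as stated), giving $4r+1$ rather than $2r+3$ functions per differentiation. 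This does not affect the conclusion: $4r+1\le 8|\bseta|-3\le 8|\bseta|$ still yields $\prod_{k=1}^{|\bseta|-1}8k=8^{|\bseta|-1}(|\bseta|-1)!$, which is exactly the count the paper obtains.
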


\begin{proof}
From \eqref{eq:P0g-ind} the preintegrated function can be written as
\begin{align*}
P_0 g_t(\bsy)
=
\begin{cases}
 \Phi_0(\xi(t,\bsy))
 & \text{if } \bsy \in U_t\,,\\
 0 & \text{if } \bsy \in \R^d \setminus U_t\,.
\end{cases}
\end{align*}
If $U_t = \emptyset$ then $P_0 g_t \equiv 0$ on $\R^d$, and the
result holds trivially. If $U_t \neq \emptyset$ then for any $\bseta
\in \N_0^d$ with $\bszero\ne\bseta \leq \bsnu$, we first prove by
induction on $|\bseta|\ge 1$ that the $\bseta$th derivative of $P_0 g_t$
is given by
\begin{align} \label{eq:D-P0-ind}
D^\bseta [P_0 g_t(\bsy)]
=
\begin{cases}
 \displaystyle
 \sum_{j=1}^{J_{0,\bseta}} h_{0,\bseta}^{[j]}(t,\bsy)
 & \text{if } \bsy \in U_t\,, \quad\mbox{with}\quad J_{0,\bseta} \le 8^{|\bseta|-1}(|\bseta|-1)!\,, \\
 0 & \text{if } \bsy \in \R^d \setminus U_t\,,
\end{cases}
\end{align}
where each function $h_{0,\bseta}^{[j]}$ is of the form \eqref{eq:h-form}
with $q=0$.

For the base case $\bseta = \bse_i$ with any $i\in \{1:d\}$, we take
$r=1$, $\bsalpha_1 = \bse_i$, $\beta = 0$ and $J_{0,\bse_i} = 1$ to
recover the single function \eqref{eq:first-der}. Suppose next that
\eqref{eq:D-P0-ind} holds for some $\bseta\in\bbN_0^d$ with
$|\bseta|\ge 1$ and consider any $i\in\{1:d\}$ and $\bsy\in U_t$. We have
\begin{align}
\label{eq:D^iD^eta-P0g}
  D^i D^\bseta [P_0 g_t(\bsy)]
 &= \sum_{j=1}^{J_{0,\bseta}} D^i h_{0,\bseta}^{[j]}(t,\bsy)
 = \sum_{j=1}^{J_{0,\bseta}} \sum_{k=1}^{K_{0,\bseta}} h_{0,\bseta+\bse_i}^{[j,k]}(t,\bsy)
\nonumber\\
 &= \sum_{j'=1}^{J_{0,\bseta+\bse_i}} h_{0,\bseta+\bse_i}^{[j']}(t,\bsy).
\end{align}
In the second equality we used Lemma~\ref{lem:count-y} in
Appendix~\ref{app:tech} with $q=0$, which states that each function
$D^i h_{0,\bseta}^{[j]}$ can be written as a sum of $K_{0,\bseta}\le
8|\bseta|-3$ functions of the form \eqref{eq:h-form} with $\bseta$
replaced by $\bseta+\bse_i$. We enumerated these functions with the
notation $h_{0,\bseta+\bse_i}^{[j,k]}$ and then relabeled all functions
for different combinations of indices $j$ and $k$ with the notation
$h_{0,\bseta+\bse_i}^{[j']}$. The total number of functions satisfies
\[
 J_{0,\bseta+\bse_i} = J_{0,\bseta}\, K_{0,\bseta}
 \le 8^{|\bseta|-1}(|\bseta|-1)!\, (8|\bseta|-3)
 \le 8^{|\bseta+\bse_i|-1}\,(|\bseta+\bse_i|-1)!\,,
\]
as required. This completes the induction proof for \eqref{eq:D-P0-ind}.

Since, for all $\bseta \leq \bsnu$, every function $h_{0,\bseta}^{[j]}(t,\cdot)$
in \eqref{eq:D-P0-ind} is continuous on $U_t$, it follows that $P_0 g_t \in
C^{\bsnu}(U_t)$. Also, $P_0 g_t\equiv 0$ on $\R^d \setminus U_t$ is
clearly smooth, and so we just need the derivatives to be continuous
across the boundary $\partial U_t$. Indeed, the assumption
\eqref{eq:h-lim} implies that
$D^\bseta [P_0 g_t(\bsy)] \to 0$ as $\bsy \to\partial U_t$ for all $\bseta \leq \bsnu$.
Hence, it follows by an adaptation of \cite[Lemma~9]{GKLS18} that $P_0
g_t \in C^{\bsnu}(\R^d)$.

It remains to show that $P_0 g_t \in \calH_{d}^{\bsnu}$ by estimating its
norm. We have
\begin{align*}
&\|P_0 g_t\|_{\calH_{d}^{\bsnu}}^2
 = \sum_{\bseta \leq \bsnu} \frac{1}{\gamma_\bseta}
 \int_{\R^d} \big|D^\bseta [P_0 g_t(\bsy)]\big|^2\,
 \bspsi_{\bseta}(\bsy_\bseta)\,\bsrho_{-\bseta}(\bsy_{-\bseta})\,\rd \bsy
 \\
&= \int_{U_t} |\Phi_0(\xi(t,\bsy))|^2\,\bsrho(\bsy) \,\rd \bsy
+ \sum_{\bszero\ne\bseta \leq \bsnu} \frac{1}{\gamma_\bseta}
\int_{U_t} \Bigg|
 \sum_{j=1}^{J_{0,\bseta}} h_{0,\bseta}^{[j]}(t,\bsy)\Bigg|^2
 \bspsi_{\bseta}(\bsy_\bseta)\,\bsrho_{-\bseta}(\bsy_{-\bseta})\,\rd \bsy
 \\
&\le 1
+ \sum_{\bszero\ne\bseta \leq \bsnu} \frac{J_{0,\bseta}}{\gamma_\bseta}
 \sum_{j=1}^{J_{0,\bseta}} \int_{U_t} | h_{0,\bseta}^{[j]}(t,\bsy)|^2\,
 \bspsi_{\bseta}(\bsy_\bseta)\,\bsrho_{-\bseta}(\bsy_{-\bseta})\,\rd \bsy
 \\
&\le 1
 + \sum_{\bszero\ne\bseta \leq \bsnu} \frac{\big(8^{|\bseta|-1}(|\bseta|-1)!\big)^2 B_{0,\bseta}}
 {\gamma_\bseta}
 \,<\, \infty\,,
\end{align*}
where we used the assumption \eqref{eq:h-int} with $q=0$. This
completes the proof.
\end{proof}

We remark that it would suffice to have $\nu_0 \coloneqq |\bsnu|$ and
$\rho_0\in C^{|\bsnu|-1}(\bbR)$ in Assumption~\ref{asm:phi} for
Theorem~\ref{thm:preint_ind} to hold. In other words, we have assumed an
extra order of regularity on $\phi$ and $\rho_0$ with respect to $y_0$
beyond that required for the cdf. The extra regularity is needed for the
corresponding theorem for the pdf, see Theorem~\ref{thm:preint_dirac} below.

The bound on the norm \eqref{eq:norm-cdf} with constants
$\{B_{0, \bseta}\}$ (e.g., obtained from information on specific
problems) can be used to choose the weight parameters $\{\gamma_\bseta\}$
to model the relative importance of subsets of variables. This would in
turn allow for a complete error analysis that is explicit in the
dependence on dimension. Performing this analysis for specific problems
will be pursued in future work.

\subsection{Smoothing by preintegration for Dirac $\delta$ distributions}

\label{sec:preint-delta}

In this section, we show that the same smoothing by preintegration
theory also works for distributions that are constructed by a Dirac
$\delta$ function, which will allow us to also estimate the pdf as
formulated in \eqref{eq:pdf-expect}.

For $t\in [a,b]$, consider a distribution of the form
\begin{equation*}
 g_t(y_0, \bsy) = \delta\big(t-\phi(y_0, \bsy)\big),
\end{equation*}
where $\delta(\cdot)$ is the Dirac $\delta$ function as characterised by
\eqref{eq:dirac} and $\phi : \R^{d + 1} \to \R$ satisfies
Assumption~\ref{asm:phi}.

For $t\in [a,b]$, $\bsy \in U_t$, and assuming $U_t\ne\emptyset$, we have
$t\in \phi(\bbR,\bsy)$.
Let $\xi(t, \bsy)$ be the unique point of
discontinuity in dimension~$0$ as in \eqref{eq:xi}. Applying the
preintegration operator \eqref{eq:P0} to the distribution $g_t$ and using
the change of variables $z = \phi(y_0, \bsy)$ so that $y_0 = \xi(z,
\bsy)$, we obtain
\begin{align}
\label{eq:P0_dirac}
P_0 g_t(\bsy) &=
\int_{-\infty}^\infty \delta\big(t-\phi(y_0, \bsy)\big)\, \rho_0(y_0) \, \rd y_0
= \int_{\phi(\R, \bsy)} \delta(t-z)\, \rho_0(\xi(z, \bsy))\,
\pd{}{}{z}\xi(z, \bsy)\, \rd z
\nonumber\\
&= \int_{\phi(\R, \bsy)} \delta(t-z)\, \rho_0(\xi(z, \bsy))\,
\frac{1}{D^0\phi(\xi(z, \bsy), \bsy)}\, \rd z
= \frac{\rho_0(\xi(t, \bsy))}{D^0\phi(\xi(t, \bsy), \bsy)},
\end{align}
where we used \eqref{eq:xi}, \eqref{eq:dt-xi}, and the definition of the
$\delta(\cdot)$ function \eqref{eq:dirac}. For $\bsy\in\bbR^d\setminus
U_t$ and so $t \not\in \phi(\R, \bsy)$, we have $\delta(t-z) = 0$ for
\emph{all} $z\in \phi(\R, \bsy)$, and hence $P_0 g_t(\bsy) = 0$. As
expected, \eqref{eq:P0_dirac} is the derivative of \eqref{eq:P0g-ind} with
respect to $t$.

With a similar proof to Theorem~\ref{thm:preint_ind}, we now show that the
preintegrated distribution $P_0 g_t$ is also smooth, with a different
bound on its norm.

\begin{theorem} \label{thm:preint_dirac}
Let $d\ge 1$, $\bsnu\in \bbN_0^d$, and $[a,b]\subset\bbR$. Suppose that
$\phi$ and $\rho_0$ satisfy Assumption~\ref{asm:phi} and
Assumption~\ref{asm:phi-h} for $q=1$ and all $\bseta\le\bsnu$. Then for
$t \in [a, b]$, the distribution
\[
  g_t(y_0,\bsy) \coloneqq \delta\big(t-\phi(y_0, \bsy)\big)
  \quad\mbox{satisfies}\quad
  P_0 g_t \in \calH^{\bsnu}_{d} \cap C^{\bsnu}(\R^d),
\]
with its $\calH^\bsnu_d$-norm bounded uniformly in $t$,
\begin{equation} \label{eq:norm-pdf}
 \sup_{t \in [a, b]} \|P_0 g_t\|_{\calH^\bsnu_d}
 \le
 \Bigg(\sum_{\bseta \leq \bsnu} \frac{\big(8^{|\bseta|}|\bseta|!\big)^2 B_{1,\bseta}}{\gamma_\bseta}\Bigg)^{1/2}
 \,<\, \infty\,.
\end{equation}
\end{theorem}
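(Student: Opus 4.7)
The approach mirrors that of Theorem~\ref{thm:preint_ind}, with the key observation that the preintegrated distribution \eqref{eq:P0_dirac} is itself already one of the template functions from Assumption~\ref{asm:phi-h}. Indeed, taking $q=1$, $\bseta=\bszero$, $r=0$, empty $\bsalpha$, and $\beta=0$ in \eqref{eq:h-form} yields exactly
\[
h_{1,\bszero}(t,\bsy) = \frac{\rho_0(\xi(t,\bsy))}{D^0\phi(\xi(t,\bsy),\bsy)} = P_0 g_t(\bsy) \quad \text{for } \bsy\in U_t,
\]
and one checks the multi-index constraints $r\le 2|\bseta|+q-1=0$, $\beta\le|\bseta|+q-1=0$, and $\beta\bse_0+\sum_\ell \bsalpha_\ell = (0,\bszero) = (r+q-1,\bseta)$ all hold. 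Thus the $\bseta=\bszero$ term of the norm is already controlled by \eqref{eq:h-int} with $q=1$, and no separate ``$1+\cdots$'' contribution appears in \eqref{eq:norm-pdf}, unlike in \eqref{eq:norm-cdf}.

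I would then prove by induction on $|\bseta|\ge 0$ that for $\bsy\in U_t$,
\[
D^\bseta[P_0 g_t(\bsy)] = \sum_{j=1}^{J_{1,\bseta}} h_{1,\bseta}^{[j]}(t,\bsy), \qquad J_{1,\bseta}\le 8^{|\bseta|}|\bseta|!,
\]
where each $h_{1,\bseta}^{[j]}$ is of the form \eqref{eq:h-form} with $q=1$, and $D^\bseta[P_0 g_t]\equiv 0$ on $\R^d\setminus U_t$. The base case is the identification above with $J_{1,\bszero}=1$. For the induction step, differentiating with respect to $y_i$, $i\in\{1{:}d\}$, one invokes Lemma~\ref{lem:count-y} from Appendix~\ref{app:tech} with $q=1$, which expresses each $D^i h_{1,\bseta}^{[j]}$ as a sum of at most $K_{1,\bseta}\le 8(|\bseta|+1)$ functions of the form \eqref{eq:h-form} with $\bseta$ replaced by $\bseta+\bse_i$ (and $q$ unchanged). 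Combining gives $J_{1,\bseta+\bse_i}\le J_{1,\bseta}\cdot 8(|\bseta|+1)\le 8^{|\bseta|+1}(|\bseta|+1)!$, as required. Verifying that the constraints on $(r,\bsalpha,\beta)$ propagate correctly under the product and chain rules -- in particular that $r+q$ in the denominator and $\beta\bse_0+\sum\bsalpha_\ell=(r+q-1,\bseta)$ remain balanced after shifting $\bseta\to\bseta+\bse_i$ -- is the main technical chore; it is exactly what Lemma~\ref{lem:count-y} is designed to encapsulate, and this is the step I expect to be the principal obstacle.

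Once the formula for $D^\bseta[P_0 g_t]$ is established, continuity on $U_t$ is immediate from Theorem~\ref{thm:implicit} (so that $\xi\in C^{(\nu_0,\bsnu)}(\overline V)$) together with Assumption~\ref{asm:phi}, while vanishing on $\R^d\setminus U_t$ is trivial. The hypothesis \eqref{eq:h-lim} with $q=1$ then forces $D^\bseta[P_0 g_t(\bsy)]\to 0$ as $\bsy\to\partial U_t$ for every $\bseta\le\bsnu$, and the gluing argument of \cite[Lemma~9]{GKLS18} (adapted as in the proof of Theorem~\ref{thm:preint_ind}) yields $P_0 g_t\in C^\bsnu(\R^d)$.

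For the norm bound, I apply the Cauchy--Schwarz-type estimate $|\sum_{j=1}^{J_{1,\bseta}} h_{1,\bseta}^{[j]}|^2 \le J_{1,\bseta}\sum_j |h_{1,\bseta}^{[j]}|^2$ to each $\bseta\le\bsnu$ term of $\|P_0 g_t\|_{\calH_d^\bsnu}^2$, including $\bseta=\bszero$, and use \eqref{eq:h-int} with $q=1$ uniformly in $t\in[a,b]$ to obtain
\[
\sup_{t\in[a,b]}\|P_0 g_t\|_{\calH_d^\bsnu}^2 \le \sum_{\bseta\le\bsnu}\frac{J_{1,\bseta}^2\,B_{1,\bseta}}{\gamma_\bseta}\le \sum_{\bseta\le\bsnu}\frac{(8^{|\bseta|}|\bseta|!)^2 B_{1,\bseta}}{\gamma_\bseta},
\]
which is precisely \eqref{eq:norm-pdf}.
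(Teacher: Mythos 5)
Your proposal is correct and follows essentially the same route as the paper's own proof: identify $P_0 g_t$ on $U_t$ as the base-case template function $h_{1,\bszero,(0,\emptyset,0)}$, induct on $|\bseta|$ using Lemma~\ref{lem:count-y} with $q=1$ to get $J_{1,\bseta}\le 8^{|\bseta|}|\bseta|!$, glue across $\partial U_t$ via \eqref{eq:h-lim} and \cite[Lemma~9]{GKLS18}, and finish with the Cauchy--Schwarz estimate and \eqref{eq:h-int}. The only cosmetic difference is that you bound $K_{1,\bseta}$ by $8(|\bseta|+1)$ rather than the lemma's sharper $8|\bseta|+3$, which still suffices.
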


\begin{proof}
From \eqref{eq:P0_dirac} the preintegrated distribution can be written as
\begin{align*}
P_0 g_t(\bsy)
=
\begin{cases}
\displaystyle
\frac{\rho_0(\xi(t, \bsy)}{D^0 \phi(\xi(t, \bsy), \bsy)}
& \text{if } \bsy \in U_t\,,\\
0 & \text{if } \bsy \in \R^d \setminus U_t\,.
\end{cases}
\end{align*}

The proof follows the same strategy the proof of Theorem~\ref{thm:preint_ind}, but
now with $q = 1$.
Again, $P_0 g_t \equiv 0$ for the case $U_t = \emptyset$, so the result holds trivially.

For $\bseta \leq \bsnu$, we first prove by
induction on $|\bseta|$ that the $\bseta$th derivative of $P_0 g_t$ is
given by \eqref{eq:D-P0-ind} with $q = 1$ (instead of 0),
where each $h^{[j]}_{1, \bseta}$ is of the form \eqref{eq:h-form}
with $q = 1$ and $J_{1, \bseta} \leq 8^{|\bseta|}|\bseta|!$.
For the base case $\bseta = \bszero$, we take $r=0$ (and so there are no
$\bsalpha_\ell$ terms), $\beta = 0$, and $J_{1,\bszero} = 1$ to recover
the single function $\rho_0(\xi(t, \bsy))/D^0\phi(\xi(t, \bsy), \bsy)$.
For the inductive step, Lemma~\ref{lem:count-y} with $q = 1$
implies that \eqref{eq:D^iD^eta-P0g} holds with $q = 1$.
Since in this case $K_{1, \bseta} \leq 8|\bseta| - 3$,
the total number of functions satisfies
\[
 J_{1,\bseta+\bse_i} = J_{1,\bseta}\, K_{1,\bseta}
 \le 8^{|\bseta|}|\bseta|!\, (8|\bseta|+3)
 \le 8^{|\bseta+\bse_i|}\,|\bseta+\bse_i|!\,,
\]
as required. This completes the induction proof for \eqref{eq:D-P0-ind} with $q = 1$.

Since each $h_{1,\bseta}^{[j]}(t,\cdot) \in C(U_t)$ and by
\eqref{eq:h-lim}, it follows that $P_0g_t \in C^\bsnu(U_t)$
with $D^\bseta P_0 g_t(\bsy) \to 0$ as $\bsy \to \partial U_t$
for all $\bseta \leq \bsnu$. Hence,
\cite[Lemma~9]{GKLS18} again implies that $P_0
g_t \in C^{\bsnu}(\R^d)$.

Finally, it remains to show that $P_0 g_t \in \calH_{d}^{\bsnu}$. The norm
of $P_0 g_t$ is given by
\begin{align*}
&\|P_0 g_t\|_{\calH_{d}^{\bsnu}}^2
=
\sum_{\bseta \leq \bsnu} \frac{1}{\gamma_\bseta}
\int_{U_t} \Bigg|
 \sum_{j=1}^{J_{1,\bseta}} h_{1,\bseta}^{[j]}(\bsy)\Bigg|^2\,
 \bspsi_{\bseta}(\bsy_\bseta)\,\bsrho_{-\bseta}(\bsy_{-\bseta})\,\rd \bsy
\\
&\le
\sum_{\bseta \leq \bsnu} \frac{J_{1,\bseta}}{\gamma_\bseta}
 \sum_{j=1}^{J_{1,\bseta}} \int_{U_t} | h_{1,\bseta}^{[j]}(\bsy)|^2\,
 \bspsi_{\bseta}(\bsy_\bseta)\,\bsrho_{-\bseta}(\bsy_{-\bseta})\,\rd \bsy
 \le
 \sum_{\bseta \leq \bsnu} \frac{\big(8^{|\bseta|}|\bseta|!\big)^2 B_{1,\bseta}}
 {\gamma_\bseta}
 \,<\, \infty\,,
\end{align*}
where we used the assumption \eqref{eq:h-int} with $q=1$. This
completes the proof.
\end{proof}

\section{Distribution function and density estimators}
\label{sec:algo}

In this section we briefly outline the QMC with preintegration algorithms for
approximating the cdf and pdf. First, note that the cdf and pdf can be
written as $d$-dimensional integrals after carrying out the preintegration
step. Explicitly, by Fubini's Theorem we can write the representation
\eqref{eq:cdf-expect} for the cdf as
\begin{align}
\label{eq:cdf_simple}
F(t) = \int_{\R^d} P_0\big(\ind\big(t-\phi(\cdot,\bsy)\big)\big) \, \bsrho(\bsy)
\, \rd \bsy
= \int_{U_t}\Phi_0(\xi(t, \bsy))\bsrho(\bsy)
\, \rd \bsy\,,
\end{align}
where in the last step we have substituted in the simplified formula
\eqref{eq:P0g-ind} for a preintegrated indicator function. Similarly,
using the representation \eqref{eq:pdf-expect} along with
\eqref{eq:P0_dirac} we can write
\begin{align}
\label{eq:pdf_simple}
f(t) = \int_{\R^d} P_0\big(\delta\big(t-\phi(\bsy)\big)\big) \bsrho(\bsy) \, \rd \bsy
=
\int_{U_t} \frac{\rho_0(\xi(t, \bsy))}{D^0\phi(\xi(t, \bsy), \bsy)}
\bsrho(\bsy)\,\rd\bsy\,.
\end{align}

\subsection{Pointwise approximation}

As a start, we consider approximating $F$ and $f$ pointwise at $t \in
[a, b]$. Applying an $N$-point QMC rule \eqref{eq:qmc} to the
$d$-dimensional integrals \eqref{eq:cdf_simple} and \eqref{eq:pdf_simple},
we obtain the approximations $\Fhat_N$ and $\fhat_N$ as follows:
\begin{align}
\label{eq:F_N_point}
 F(t) \approx
 \Fhat_N(t)
 &\coloneqq Q_{d, N}\big(\Phi_0 (\xi(t, \cdot))\big)
=
\frac{1}{N} \sum_{n=0}^{N - 1} \Phi_0\big(\xi(t, \bstau_n)\big),
\\
\label{eq:f_N_point}
 f(t) \approx
 \fhat_N(t)
 &\coloneqq Q_{d, N}\bigg(\frac{\rho_0 (\xi(t, \cdot))}{D^0\phi (\xi(t, \cdot), \cdot)}\bigg)
= \frac{1}{N} \sum_{n=0}^{N - 1} \frac{\rho_0(\xi(t, \bstau_n))}{D^0\phi(\xi(t, \bstau_n), \bstau_n)}\,.
\end{align}
We are particularly interested in using randomly shifted lattice points
\eqref{eq:lattice} with \eqref{eq:transQMC}, but the description in this
section applies equally to other QMC rules.

Since a randomly shifted lattice rule $Q_{d, N}$ is an unbiased estimator
of the $d$-dimensional integral, it follows that the estimators $\Fhat_N$
and $\fhat_N$ are also \emph{unbiased}. However, this assumes that we can
compute the point of discontinuity $\xi(t, \cdot)$ exactly, which is not
generally true. In practice, we must often approximate this $\xi(t, \cdot)$
by some numerical approximation, as performed in
\cite{GKLS18,LEcPuchBAb19}
for the case where $\bsY$ is a multivariate normal vector.
This leads to \emph{biased} estimators
$\Ftilde_N$ and $\ftilde_N$.
We now detail how to construct these biased
estimators efficiently.

First, consider approximating the cdf $F$ at $t \in [a, b]$. For each transformed QMC
point $\bstau_n$ we must: (i) find the point of discontinuity $\xi(t,
\bstau_n)$, and then (ii) evaluate $\Phi_0$ at this point. In practice,
these two actions must be performed numerically, however, we stress that
we only need to work with the univariate function
\[
  \phi_{0,n} \coloneqq \phi(\cdot,\bstau_n),
\]
which can be evaluated efficiently for multiple inputs if we
``precompute'' and store the contribution of $\bstau_n$ to $\phi_{0,n}$.
As a trivial example to demonstrate this, if we have a product function
$\phi(\cdot,\bstau_n) = p_0(\cdot)\,\prod_{i=1}^d p_i(\tau_{n,i})$, then
evaluating $\phi$ in general has a cost of $\calO(d)$, but if we
precompute and store the product involving $\bstau_n$, then we can evaluate
$\phi_{0,n}$ for $K$ different inputs with a cost of $\calO(K+d)$ instead
of $\calO(Kd)$. We assume that $\phi_{0,n}' = D^0\phi(\cdot, \bstau_n)$
can be evaluated directly, and we also precompute and store the
contribution of $\bstau_n$ to $\phi_{0,n}'$.

To find the point of discontinuity, we use a numerical root-finding
algorithm, e.g., Newton's method. Since $\phi \in C^{(\nu_0,\bsnu)}(\R^{d
+ 1})$ with $\nu_0 = |\bsnu|+1$, we have $\phi_{0,n}\in C^{\nu_0}(\bbR)$
for each $\bstau_n$. If $|\bsnu|\ge 1$ then $\phi_{0,n} \in C^2(\bbR)$ and
Newton's method converges quadratically, so in practice only a few
iterations are required. Alternatively, if the higher-order derivatives of
$\phi_{0,n}$ can be computed explicitly, then a higher-order Householder
method can instead be used. We denote the numerical approximation of $\xi$
by $\widetilde{\xi}$.

If $\rho_0$ is a Gaussian distribution, then fast and accurate
approximations of its cdf $\Phi_0$ are readily available.
Otherwise, if we cannot evaluate $\Phi_0$ easily, then we
approximate the one-dimensional integral $\Phi_0(y_0) =
\int_{-\infty}^{y_0} \rho_0(z)\,\rd z$ by a quadrature rule. In both
cases we denote the approximation of $\Phi_0$ by
$\widetilde{\Phi}_0$.

Approximating the pdf $f$ at $t\in [a,b]$ is similar: (i) obtaining the
point $\xi(t,\bstau_n)$ is the same, while (ii) evaluating the ratio
$\rho_0/\phi_{0,n}'$ instead of the one-dimensional integral for $\Phi_0$
is slightly simpler.

The QMC with approximate preintegration estimators of the cdf $F$ and pdf
$f$ are
\begin{align}
\label{eq:F_point_approx}
\Ftilde_N(t) &\coloneqq \frac{1}{N} \sum_{n=0}^{N - 1}
\widetilde{\Phi}_0 \big(\widetilde{\xi}(t, \bstau_n)\big)\,,
 \\
\label{eq:f_point_approx}
\ftilde_N(t) &\coloneqq \frac{1}{N} \sum_{n=0}^{N - 1}
\frac{\rho_0 \big(\widetilde{\xi}(t, \bstau_n)\big)}{D^0\phi\big(\widetilde{\xi}(t, \bstau_n), \bstau_n\big)} \,.
\end{align}
Algorithms \ref{alg:cdf-pointwise} and~\ref{alg:pdf-pointwise} give
explicit implementations of \eqref{eq:F_point_approx} and
\eqref{eq:f_point_approx}.

\subsection{Cost of pointwise approximation}
\label{sec:cost_point}

First, in the special case where the point of discontinuity $\xi(t,
\cdot)$ and the one-dimensional integral $\Phi_0$ can be computed
analytically, we have $\cost(\Fhat_N(t)) = \bigO(N)$ and
$\cost(\fhat_N(t)) = \bigO(N)$. However, as mentioned above, this is not
the typical case in practice, and we must approximate these quantities by
numerical root-finding and quadrature methods.

\begin{algorithm}[t]
\caption{Pointwise cdf estimator} \label{alg:cdf-pointwise} Given $t \in
[a, b]$, $N \in \N$ and $\{\bstau_n\}_{n=0}^{N - 1}$ a transformed $d$-dimensional QMC
point set:
\begin{algorithmic}[1]
 \State Initialise: $\Ftilde_N(t) \leftarrow 0$
 \For{$n = 0, 1, \ldots, N - 1$}
 \State Precompute the contribution of $\bstau_n$ to $\phi_{0,n} = \phi(\cdot,
 \bstau_n)$ and $\phi_{0,n}' = D^0\phi(\cdot,\bstau_n)$
 \State Compute the point of discontinuity $\widetilde{\xi}(t, \bstau_n)$
 \State Approximate the 1D integral
$\widetilde{\Phi}_0 \big(\widetilde{\xi}(t, \bstau_n)\big)
 \approx 
 \int_{-\infty}^{\widetilde{\xi}(t, \bstau_n) } \rho_0(z) \, \rd z
$
 \State Sum: $\Ftilde_N(t) \leftarrow \Ftilde_N(t)+ \widetilde{\Phi}_0 \big(\widetilde{\xi}(t, \bstau_n)\big)$
 \EndFor
 \State Average: $\Ftilde_N(t) \leftarrow \Ftilde_N(t)/N$
\end{algorithmic}
\end{algorithm}
\begin{algorithm}[t]
\caption{Pointwise pdf estimator} \label{alg:pdf-pointwise} Given $t \in
[a, b]$, $N \in \N$ and $\{\bstau_n\}_{n=0}^{N - 1}$ a transformed $d$-dimensional QMC
point set:
\begin{algorithmic}[1]
 \State Initialise: $\ftilde_N(t) \leftarrow 0$
 \For{$n = 0, 1, \ldots, N - 1$}
 \State Precompute the contribution of $\bstau_n$ to $\phi_{0,n} = \phi(\cdot,
 \bstau_n)$ and $\phi_{0,n}' = D^0\phi(\cdot,\bstau_n)$
 \State Compute the point of discontinuity $\widetilde{\xi}(t, \bstau_n)$
 \State Sum: $
 \ftilde_N(t) \leftarrow \ftilde_N(t)+ \frac{\rho_0 \big(\widetilde{\xi}(t,
 \bstau_n)\big)}{D^0\phi\big(\widetilde{\xi}(t, \bstau_n), \bstau_n\big)}$
 \EndFor
 \State Average: $\ftilde_N(t) \leftarrow \ftilde_N(t)/N$
\end{algorithmic}
\end{algorithm}

To analyse the cost of the pointwise approximations $\Ftilde_N(t)$ and
$\ftilde_N(t)$ in Algorithms~\ref{alg:cdf-pointwise} and
\ref{alg:pdf-pointwise}, we assume that the number of evaluations of
$\phi_{0,n}$ and $\phi_{0,n}'$ in the root-finding method to compute
$\widetilde{\xi}(t, \bstau_n)$ for each $n$ in Step~4 is bounded by
$K_\mathrm{root}$, which is assumed to be independent of $n$. For the cdf
approximation in Algorithm~\ref{alg:cdf-pointwise}, we also assume that
$\cost(\rho_0) = \bigO(1)$, and that the number of quadrature points to
compute the one-dimensional integral in Step~5 is bounded by
$K_\mathrm{quad}$, also independently of~$n$.

Then to more concretely illustrate why it is important to precompute the
contribution of $\bstau_n$ to $\phi_{0,n}$ and $\phi_{0,n}'$, we make the
following assumption about the difference in cost in evaluating $\phi$ and
$D^0\phi$ compared with the univariate functions $\phi_{0,n}$ and
$\phi_{0,n}'$:
\begin{align}
\label{eq:dollar}
\begin{cases}
 \cost(\phi) = \$(d),
 &\!\cost(\phi_{0,n}) = \$(1) \text{ with precomputed contribution of $\bstau_n$}, \\
 \cost(D^0\phi) = \$(d),
 &\!\cost(\phi_{0,n}') = \$(1) \text{ with precomputed contribution of $\bstau_n$},
 \end{cases}
\end{align}
for some nondecreasing function $\$ : \N \to \N$.

The cost of Algorithms~\ref{alg:cdf-pointwise} and \ref{alg:pdf-pointwise}
are then
\begin{align*}
 \cost\big(\Ftilde_N(t)\big) &= \bigO\big(N\,[\$(d) + K_\mathrm{root}\,\$(1) + K_\mathrm{quad}]\big)
 \quad \text{and} \\
 \cost\big(\ftilde_N(t)\big) &= \bigO\big(N\,[\$(d) + K_\mathrm{root}\,\$(1)]\big)\,.
\end{align*}
For large $d$, this will be much more efficient than a naive implementation
without precomputed contribution of $\bstau_n$, which would have
\begin{align*}
\cost\big(\Ftilde_N(t)\big) = \bigO\big(N\,[K_\mathrm{root}\,\$(d) + K_\mathrm{quad}]\big)
\quad \text{and} \quad
\cost\big(\ftilde_N(t)\big) = \bigO\big(N\,K_\mathrm{root}\,\$(d) \big)\,.
\end{align*}

\subsection{Approximating the cdf and pdf on an interval}

Now we outline the full QMC with preintegration method for approximating
the cdf and pdf on $[a, b]$, obtained by applying Lagrange interpolation
$L_M$ based on points {$\{t_m\}_{m=0}^M \subset [a, b]$ to the pointwise
estimators $\Fhat_N$ and $\fhat_N$. We denote the approximations~by
\begin{align}
\label{eq:F_NM}
 \Fhat_{N, M}&\coloneqq L_M(\Fhat_N) =
 L_M\Big(Q_{d, N}\big(\Phi_0(\xi(\bullet,\cdot))\big)\Big),
 \\
 \label{eq:f_NM}
 \fhat_{N, M}&\coloneqq L_M(\fhat_N) =
 L_M\Big(Q_{d, N}\Big( \frac{\rho_0(\xi(\bullet,\cdot))}{D^0\phi(\xi(\bullet,\cdot),\cdot)}\Big)\Big),
\end{align}
where the QMC rule $Q_{d, N}$ acts on a function with respect to $\cdot$
whereas Lagrange interpolation $L_M$ acts on $\bullet$. As discussed in
Section~\ref{sec:1D-approx} we will use Chebyshev points, but the
description below allows for any set of distinct interpolation nodes.

In practice, we must approximate the point of discontinuity by
$\widetilde{\xi} \approx \xi$, and for the cdf also the one-dimensional
integral by $\widetilde{\Phi}_0 \approx \Phi_0$. This leads to the biased
estimators
\begin{align}
\label{eq:F_NM_approx}
\Ftilde_{N, M}&\coloneqq L_M(\Ftilde_N)
= L_M\Big(Q_{d, N}\Big(\widetilde{\Phi}_0\big(\widetilde{\xi}(\bullet,\cdot)\big)\Big)\Big),
\\
\label{eq:f_NM_approx}
\ftilde_{N, M}&\coloneqq L_M(\ftilde_N) =
 L_M\bigg(Q_{d, N}\bigg(
 \frac{\rho_0\big(\widetilde{\xi}(\bullet,\cdot)\big)}{D^0\phi\big(\widetilde{\xi}(\bullet,\cdot), \cdot\big)}
 \bigg)\bigg).
\end{align}

Recall from the definition of the Lagrange interpolation operator
\eqref{eq:L_M} that to construct the estimators $\Ftilde_{N, M}$ and
$\ftilde_{N, M}$, we must compute the pointwise approximations $\Ftilde_N$
and $\ftilde_N$ at all of the interpolation nodes $\{t_m\}_{m = 0}^M$. One
way to implement the estimator $\Ftilde_{N, M}$ as in
\eqref{eq:F_NM_approx}  is to simply run Algorithm~\ref{alg:cdf-pointwise}
for each $t_m$ for $m = 0, 1, \ldots, M$, with $\cost(\Ftilde_{N, M}) = (M
+ 1)\times \cost(\Ftilde_N)$. However, since we can use the same QMC rule
for each interpolation node $t_m$, it is more efficient to instead
\emph{vectorise} Algorithm~\ref{alg:cdf-pointwise} and utilise precomputed
contributions of each point $\bstau_n$ so that we only have to deal
with $M + 1$ univariate functions. Similar arguments can also be made for
the cdf estimator $\ftilde_{N, M}$. Explicit algorithms detailing how to
construct the estimators $\Ftilde_{N, M}$ and $\ftilde_{N, M}$ are given
in Algorithms~\ref{alg:cdf-full} and~\ref{alg:pdf-full}.

\subsection{Cost of full cdf and pdf estimators}
\label{sec:cost_full}

Following the analysis of the cost of the pointwise estimators in
Section~\ref{sec:cost_point}, we again assume the cost model
\eqref{eq:dollar} and assume that the number of evaluations of
the univariate functions in the root-finding method and the number
of quadrature points for computing the one-dimensional
integral are bounded by $K_\mathrm{root}$ and $K_\mathrm{quad}$,
respectively, which are additionally assumed to be independent of $n$
and $m$. The cost of Algorithms~\ref{alg:cdf-full} and \ref{alg:pdf-full}
are then
\begin{align*}
\cost(\Ftilde_{N, M}) &= \bigO\big(N\,[\$(d) + M\,K_\mathrm{root}\,\$(1) + M\,K_\mathrm{quad}]\big),\\
\cost(\ftilde_{N, M}) &= \bigO\big(N\,[\$(d) + M\,K_\mathrm{root}\,\$(1)]\big).
\end{align*}

To once again illustrate the importance of the precomputation step, we note
that a na\"ive implementation that simply evaluates $\phi$ at all of its
components each time would have $\cost(\ftilde_{N, M}) =
\bigO(N\,M\,K_\mathrm{root}\,\$(d))$.

\begin{algorithm}[t]
\caption{cdf estimator} \label{alg:cdf-full} Given $M\in\bbN$, $\{t_m\}_{m
= 0}^M \subset [a, b]$, $N \in \N$ and $\{\bstau_n\}_{n=0}^{N - 1}$ a transformed
$d$-dimensional QMC point set:
\begin{algorithmic}[1]
 \State Initialise: $\Ftilde_N(t_m) \leftarrow 0$ for each $m = 0,1,\ldots,M$
 \For{$n = 0, 1, \ldots, N - 1$}
 \State Precompute the contribution of $\bstau_n$ to $\phi_{0,n} = \phi(\cdot,
 \bstau_n)$ and $\phi_{0,n}' = D^0\phi(\cdot,\bstau_n)$
 \For{$m = 0, 1, \ldots, M$}
 \State Compute the point of discontinuity $\widetilde{\xi}(t_m, \bstau_n)$
 \State Approximate the 1D integral
$\widetilde{\Phi}_0 \big(\widetilde{\xi}(t_m, \bstau_n)\big) \approx
 \int_{-\infty}^{\widetilde{\xi}(t_m, \bstau_n) } \rho_0(z) \, \rd z$
 \State Sum: $\Ftilde_N(t_m) \leftarrow \Ftilde_N(t_m)
 + \widetilde{\Phi}_0\big(\widetilde{\xi}(t_m, \bstau_n)\big)$
 \EndFor
 \EndFor
 \State Average: $\Ftilde_N(t_m) \leftarrow \Ftilde_N(t_m)/N$ for each $m = 0, 1, \ldots, M$
 \State Interpolate: $
 \Ftilde_{N, M} \,\leftarrow\,
\sum_{m = 0}^M \Ftilde_N(t_m)\, \chi_{M, m}$
\end{algorithmic}
\end{algorithm}

\begin{algorithm}[t] \caption{pdf estimator} \label{alg:pdf-full}
Given $M\in\bbN$, $\{t_m\}_{m = 0}^M \subset [a, b]$, $N \in \N$ and
$\{\bstau_n\}_{n=0}^{N - 1}$ a transformed $d$-dimensional QMC point set:
\begin{algorithmic}[1]
 \State Initialise: $\ftilde_N(t_m) \leftarrow 0$ for each $m = 0,1,\ldots,M$
 \For{$n = 0, 1, \ldots, N - 1$}
 \State Precompute the contribution of $\bstau_n$ to $\phi_{0,n} = \phi(\cdot,
 \bstau_n)$ and $\phi_{0,n}' = D^0\phi(\cdot,\bstau_n)$
 \For{$m = 0, 1, \ldots, M$}
 \State Compute the point of discontinuity $\widetilde{\xi}(t_m, \bstau_n)$
 \State Sum: $
 \ftilde_N(t_m) \leftarrow \ftilde_N(t_m)+
 \frac{\rho_0 \big(\widetilde{\xi}(t_m,\bstau_n)\big)}{D^0\phi\big(\widetilde{\xi}(t_m, \bstau_n), \bstau_n\big)}$
 \EndFor
 \EndFor
 \State Average: $\ftilde_N(t_m) \leftarrow \ftilde_N(t_m)/N$ for each $m = 0, 1, \ldots, M$
 \State Interpolate: $
 \ftilde_{N, M} \,\leftarrow\, \sum_{m = 0}^M \ftilde_N(t_m)\, \chi_{M, m}$
\end{algorithmic}
\end{algorithm}

\section{Error analysis}

\subsection{Regularity of $F$ and $f$}
\label{sec:regularity}

In order to utilise the results on the error for interpolation on $[a, b]$
from Section~\ref{sec:1D-approx}, we need to know quantitatively how smooth
the cdf $F$ and pdf $f$ are with respect to $t$. Clearly this smoothness
will depend on the smoothness of the original transformation $\phi$ from
\eqref{eq:X}. Since in Assumption~\ref{asm:phi} we assume that $\phi$ is
$|\bsnu| + 1$ times differentiable with respect to variable $y_0$
and $\rho_0 \in C^{|\bsnu|}(\R)$,
we can expect a similar level of smoothness for $F$ and $f$.

To see the dependence on $t$ more explicitly, recall that the formulas
\eqref{eq:cdf-expect} for cdf and \eqref{eq:pdf-expect} for the pdf can be
formulated as $d$-dimensional integrals as in \eqref{eq:cdf_simple} and
\eqref{eq:pdf_simple}, respectively. From these formulas, it is then clear
that the smoothness of $F$ and $f$ depends on the smoothness of $\xi$,
which in turn depends on the smoothness of $\phi$. In particular,
Theorem~\ref{thm:implicit} implies that $\xi$ is as smooth (with respect to
$t$) as $\phi$ (with respect to $y_0$).

Note that the assumptions we make here on the smoothness of $\phi$ and
$\rho_0$ are the same as those required for the preintegration step, i.e.,
we do not need any further smoothness assumptions.

\begin{theorem} \label{thm:f-smooth}
Let $d\ge 1$, $\bsnu \in\bbN_0^d$, and $[a,b]\subset\bbR$. Suppose that
$\phi$ and~$\rho_0$ satisfy Assumption~\ref{asm:phi} and
Assumption~\ref{asm:phi-h} for $\bseta = \bszero$ and all $q = 1, 2,
\ldots, |\bsnu|+1$.
Assume additionally that $U_t = \R^d$ for all $t \in [a, b]$. Then
  $F \in W^{|\bsnu|+1, \infty}[a,b]$ and
  $f \in W^{|\bsnu|, \infty}[a,b]$,
and for $q = 1, \ldots, |\bsnu| + 1$ the
derivatives are bounded by
\begin{equation*}
  \|F^{(q)}\|_{L^\infty}
  = \|f^{(q - 1)}\|_{L^\infty}
  \le 3^{q - 1}\,(q - 1)!\,B_{q,\bszero}^{1/2}.
\end{equation*}
\end{theorem}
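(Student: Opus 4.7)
The plan is to differentiate $F$ repeatedly under the integral sign, starting from the simplified formula
\[
F(t) \;=\; \int_{\R^d} \Phi_0(\xi(t,\bsy))\,\bsrho(\bsy)\,\rd\bsy
\]
obtained from \eqref{eq:cdf_simple} under the assumption $U_t = \R^d$, and to prove by induction on $q \in \{1,\ldots,|\bsnu|+1\}$ that
\[
F^{(q)}(t) \;=\; f^{(q-1)}(t) \;=\; \int_{\R^d} \sum_{j=1}^{J_{q,\bszero}} h_{q,\bszero}^{[j]}(t,\bsy)\,\bsrho(\bsy)\,\rd\bsy,
\]
where each $h_{q,\bszero}^{[j]}$ is of the form \eqref{eq:h-form} (with $\bseta=\bszero$) and the total count satisfies $J_{q,\bszero} \le 3^{q-1}(q-1)!$. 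Once this is established, Cauchy--Schwarz against $\bsrho$ together with assumption \eqref{eq:h-int} immediately yields the stated $L^\infty$ bound, since $\bspsi_\bszero\bsrho_{-\bszero} = \bsrho$ and $\int_{\R^d}\bsrho\,\rd\bsy = 1$.

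For the base case $q=1$, formula \eqref{eq:dt-xi} in Theorem~\ref{thm:implicit} gives
\[
\partial_t \Phi_0(\xi(t,\bsy)) \;=\; \frac{\rho_0(\xi(t,\bsy))}{D^0\phi(\xi(t,\bsy),\bsy)},
\]
which is exactly $h_{1,\bszero}$ (taking $r=0$, $\beta=0$) and agrees with \eqref{eq:pdf_simple}, so $J_{1,\bszero}=1$ and $f=F'$. For the inductive step, I differentiate each $h_{q,\bszero}^{[j]}$ with respect to $t$ using the chain/product/power rules. Since $\bseta=\bszero$ forces every $\bsalpha_\ell$ to be a multiple $k_\ell \bse_0$ with $k_\ell\ge 2$, the three types of $t$-dependent factors --- $\rho_0^{(\beta)}(\xi)$, the $r$ factors $D^{k_\ell\bse_0}\phi(\xi,\bsy)$, and the power $[D^0\phi(\xi,\bsy)]^{-(r+q)}$ --- each produce an extra factor $1/D^0\phi$ via \eqref{eq:dt-xi}, increasing the denominator power by one (so $q\to q+1$) and incrementing $\beta$, one of the $k_\ell$, or $r$, respectively. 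A direct check shows that in each case all the constraints in \eqref{eq:h-form} (now with $q+1$ in place of $q$) remain satisfied; in particular $r'\le 2|\bseta|+q'-1 = q$, $\alpha'_{\ell,0}\le q+1$ and $\beta'\le q$. The power-rule derivative introduces a numerical coefficient $r+q$, which I unpack as that many identical terms of the form \eqref{eq:h-form}. The total number of terms generated from a single $h_{q,\bszero}^{[j]}$ is therefore at most $1 + r + (r+q) = 2r+q+1 \le 3q-1$ (using $r\le q-1$), yielding the recursion $J_{q+1,\bszero} \le (3q-1)\,J_{q,\bszero}$ and hence $J_{q,\bszero} \le \prod_{k=1}^{q-1}(3k-1) \le 3^{q-1}(q-1)!$. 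The interchange of $\partial_t$ and $\int_{\R^d}$ at each step is justified by dominated convergence, using the uniform-in-$t$ $L^1(\bsrho)$ bound obtained from \eqref{eq:h-int} via Cauchy--Schwarz.

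Combining the representation with Cauchy--Schwarz, for every $t\in[a,b]$,
\[
|F^{(q)}(t)| \;\le\; \sum_{j=1}^{J_{q,\bszero}} \int_{\R^d} |h_{q,\bszero}^{[j]}(t,\bsy)|\,\bsrho(\bsy)\,\rd\bsy \;\le\; J_{q,\bszero}\,B_{q,\bszero}^{1/2} \;\le\; 3^{q-1}(q-1)!\,B_{q,\bszero}^{1/2},
\]
and the identity $\|F^{(q)}\|_{L^\infty} = \|f^{(q-1)}\|_{L^\infty}$ is immediate from $f=F'$. The main obstacle is the bookkeeping in the inductive step: one must verify term-by-term that each output of $\partial_t$ still fits the template \eqref{eq:h-form} (including the parameter inequalities and the sum relation $\beta\bse_0 + \sum\bsalpha_\ell = (r+q-1,\bszero)$), and track the numerical coefficient $r+q$ from the power-rule carefully enough that the factorial growth does not exceed $3^{q-1}(q-1)!$. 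This is somewhat simpler than the corresponding $\bsy$-derivative counting in Lemma~\ref{lem:count-y}, because $\partial_t\xi = 1/D^0\phi$ produces only a single new factor, whereas $D^i\xi$ from \eqref{eq:Di-xi} yields the ratio $-D^i\phi/D^0\phi$ and hence additional splitting.
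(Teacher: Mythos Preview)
Your proposal is correct and follows essentially the same route as the paper: induction on $q$ to write $\partial_t^q \Phi_0(\xi(t,\bsy))$ as a sum of at most $3^{q-1}(q-1)!$ functions of the form \eqref{eq:h-form} with $\bseta=\bszero$, followed by Cauchy--Schwarz against $\bsrho$ and the bound \eqref{eq:h-int}. The only organizational difference is that the paper packages the inductive-step counting (your $1+r+(r+q)=2r+q+1\le 3q-1$) into a separate technical lemma (Lemma~\ref{lem:count-t}), whereas you derive it inline for the special case $\bseta=\bszero$; your justification of differentiation under the integral via dominated convergence corresponds to the paper's citation of the Leibniz rule from \cite{GKLS18}.
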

\begin{proof}
We prove that the cdf satisfies $F \in W^{|\bsnu|+1, \infty}[a, b]$. Then
since $f = F'$, the result for the pdf follows immediately.

Consider the derivative of order $q \leq |\bsnu|+1$. First,
differentiating \eqref{eq:cdf_simple} with respect to~$t$ and
applying the Leibniz rule \cite[Theorem~4]{GKLS18} $q$
times, we obtain
\begin{equation}
\label{eq:F^q}
 F^{(q)}(t) =
\int_{\bbR^d} \pd{q}{}{t^q} \Phi_0(\xi(t, \bsy))\,\bsrho(\bsy) \, \rd \bsy\,.
\end{equation}

Recall that we have
\begin{equation}
\label{eq:dt_Phi0}
 \frac{\partial}{\partial t} \Phi_0(\xi(t,\bsy))
 = \frac{\rho_0(\xi(t,\bsy))}{D^0\phi(\xi(t,\bsy),\bsy)}.
\end{equation}
We now prove by induction on $q\ge 1$ that
\begin{align} \label{eq:hyp3}
 \frac{\partial^q}{\partial t^q} \Phi_0(\xi(t,\bsy))
 = \sum_{j=1}^{J_{q,\bszero}} h_{q,\bszero}^{[j]}(t,\bsy),
 \quad\mbox{with}\quad J_{q,\bszero} \le 3^{q-1}\,(q-1)!\,,
\end{align}
where each function $h_{q,\bszero}^{[j]}$ is of the form \eqref{eq:h-form}
with $\bseta=\bszero$. The base step $q=1$ holds for the single function
\eqref{eq:dt_Phi0} with $r=0$ (no $\bsalpha$), $\beta=0$, and
$J_{1,\bszero} = 1$. Suppose next that \eqref{eq:hyp3} holds for some
$q\ge 1$. Then we have
\begin{align*}
 \frac{\partial}{\partial t}  \bigg(\frac{\partial^q}{\partial t^q} \Phi_0(\xi(t,\bsy)) \bigg)
 = \sum_{j=1}^{J_{q,\bszero}} \frac{\partial}{\partial t} h_{q,\bszero}^{[j]}(t,\bsy)
 = \sum_{j=1}^{J_{q,\bszero}} \sum_{k=1}^{K_{q,\bszero}} h_{q+1,\bszero}^{[j,k]}(t,\bsy)
 = \sum_{j'=1}^{J_{q+1,\bszero}} h_{q+1,\bszero}^{[j']}(t,\bsy).
\end{align*}
In the second equality we used Lemma~\ref{lem:count-t} in Appendix~\ref{app:tech}
with $\bseta = \bszero$, which states that each function
$\frac{\partial}{\partial t} h_{q,\bszero}^{[j]}$ can be written as a sum
of $K_{q,\bszero}\le 3q-1$ functions of the form \eqref{eq:h-form}, with
$q$ replaced by $q+1$. We enumerated these functions with the notation
$h_{q+1,\bszero}^{[j,k]}$ and then relabeled all functions for different
combinations of indices $j$ and $k$ with the
notation~$h_{q+1,\bszero}^{[j']}$. The total number of functions satisfies
\[
 J_{q+1,\bszero} = J_{q,\bszero}\, K_{q,\bszero}
 \le 3^{q-1} (q-1)!\, (3q-1)
 \le 3^{q}\,q!\,,
\]
as required. This completes the induction proof for \eqref{eq:hyp3}.

Thus we have
\[
 f^{(q-1)}(t) = F^{(q)}(t)
 = \sum_{j=1}^{J_{q,\bszero}}
 \int_{\bbR^d} h_{q,\bszero}^{[j]}(t,\bsy)\, \bsrho(\bsy) \,\rd\bsy
\]
and
\begin{align*}
 \|f^{(q-1)}\|_{L^\infty} = \|F^{(q)}\|_{L^\infty}
 \le \sum_{j=1}^{J_{q,\bszero}} \sup_{t\in [a,b]}
 \int_{\bbR^d} \big|h_{q,\bszero}^{[j]}(t,\bsy)\big|\, \bsrho(\bsy) \,\rd\bsy
 \le 3^{q-1}\,(q-1)!\,B_{q,\bszero}^{1/2}, 
\end{align*}
where we used assumption \eqref{eq:h-int} with $\bseta = \bszero$ and the
Cauchy-Schwarz inequality. Hence $F \in W^{|\bsnu|+1, \infty}[a, b]$ and
also $f = F' \in W^{|\bsnu|, \infty}[a, b]$.
\end{proof}

\begin{remark}
In Theorem~\ref{thm:f-smooth}, we have assumed for simplicity that $U_t = \R$ 
for all $t \in [a, b]$, which implies that for each $\bsy \in \R^d$ and $t \in [a, b]$
there is some $y_0 \in \R$ such that $\phi(y_0, \bsy) = t$.
This can be viewed as a restriction on the interval $[a, b]$.
\end{remark}

\subsection{Error of cdf and pdf estimators}

In this subsection we analyse the error of the unbiased estimators from
Section~\ref{sec:algo}. First, we prove bounds for the
RMSE of the pointwise estimators $\Fhat_N$
and $\fhat_N$. Then we bound the \emph{root-mean-integrated-square error
(RMISE)} of the full estimators $\Fhat_{N, M}$ and $\fhat_{N, M}$ on
$[a,b]$. Recall that the expectation in the RMSE and RMISE, which we
denote by $\bbE_\bsDelta$, is taken with respect to the random shift
$\bsDelta$ in the lattice rule.
In this section, we assume that the generating vector is
constructed using the CBC algorithm from \cite{NK14}.

\begin{theorem}[Pointwise RMSE] \label{thm:err-point}
Let $d \geq 1$, $\bsnu = \bsone \in \N^d$, and $[a, b] \subset \R$. Suppose
that $\phi$ and $\rho_0$ satisfy Assumption~\ref{asm:phi} and
Assumption~\ref{asm:phi-h} for all $\bseta \in \{0, 1\}^d$ with $q = 0$
for the cdf case and $q = 1$ for the pdf case. Let $Q_{d, N}$ be a
CBC-constructed randomly shifted lattice rule as in \eqref{eq:qmc}.
Then, for $t \in [a, b]$, the estimators $\Fhat_N(t)$ and $\fhat_N(t)$ as given
in \eqref{eq:F_N_point} and \eqref{eq:f_N_point} satisfy, for all $\lambda
\in (1/\omega, 1]$,
\begin{align}
\label{eq:err-point-F}
\sqrt{\bbE_\bsDelta \big[\big|F(t) - \Fhat_N(t)\big|^2\big]} \le C_{F,\lambda}\,
\phi_\mathrm{tot}(N)^{-1/(2\lambda)}\,,
\\
\label{eq:err-point-f}
\sqrt{\bbE_\bsDelta \big[\big|f(t) - \fhat_N(t)\big|^2\big]} \le C_{f,\lambda}\,
\phi_\mathrm{tot}(N)^{-1/(2\lambda)}\,,
\end{align}
where, with $\omega$ and $c$ as in Theorem~\ref{thm:qmc},
\begin{align*}
 C_{F,\lambda}
&\coloneqq \Bigg(\sum_{\bszero \neq \bseta \in \{0, 1\}^d} \gamma_\bseta^\lambda
\big[2\,c\, \zeta ( \omega \lambda)\big]^{|\bseta|}\Bigg)^{\frac{1}{2\lambda}}
 \Bigg(1 + \!\!\!
 \sum_{\bszero\ne\bseta \in \{0, 1\}^d}
 \frac{\big(8^{|\bseta|-1}(|\bseta|-1)!\big)^2 B_{0, \bseta}}{\gamma_\bseta}\bigg)^{\frac{1}{2}},
\\
 C_{f,\lambda}
&\coloneqq \Bigg(\sum_{\bszero \neq \bseta \in \{0, 1\}^d} \!\!\gamma_\bseta^\lambda
\big[2\,c\,\zeta ( \omega \lambda)\big]^{|\bseta|}\Bigg)^{\frac{1}{2\lambda}}
 \Bigg(\sum_{\bseta \in \{0, 1\}^d} \frac{\big(8^{|\bseta|}|\bseta|!\big)^2 B_{1,\bseta}}{\gamma_\bseta}
 \Bigg)^{\frac{1}{2}}.
\end{align*}
\end{theorem}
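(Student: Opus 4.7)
The plan is to recognise that this theorem is essentially a direct corollary obtained by composing Theorem~\ref{thm:qmc} (the QMC error bound in $\calH_d^{\bsone}$) with Theorem~\ref{thm:preint_ind} (for the cdf) and Theorem~\ref{thm:preint_dirac} (for the pdf), which provide the required bounds on the $\calH_d^{\bsone}$-norms of the preintegrated integrands. No new technical machinery is needed beyond bookkeeping of constants.

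First I would rewrite the pointwise error in a form amenable to Theorem~\ref{thm:qmc}. Using Fubini on \eqref{eq:cdf-expect} and \eqref{eq:pdf-expect} gives the $d$-dimensional representations \eqref{eq:cdf_simple} and \eqref{eq:pdf_simple}, so that for each fixed $t\in[a,b]$,
\begin{align*}
F(t) - \Fhat_N(t) &= \int_{\R^d} (P_0 g_t^{\mathrm{ind}})(\bsy)\,\bsrho(\bsy)\,\rd\bsy - Q_{d,N}(P_0 g_t^{\mathrm{ind}}),\\
f(t) - \fhat_N(t) &= \int_{\R^d} (P_0 g_t^{\delta})(\bsy)\,\bsrho(\bsy)\,\rd\bsy - Q_{d,N}(P_0 g_t^{\delta}),
\end{align*}
where $g_t^{\mathrm{ind}} = \ind(t-\phi)$ and $g_t^{\delta} = \delta(t-\phi)$. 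Each is in the exact form to which Theorem~\ref{thm:qmc} applies, provided we can verify that $P_0 g_t^{\mathrm{ind}}$ and $P_0 g_t^{\delta}$ lie in $\calH_d^{\bsone}$ with controlled norm.

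Next I would invoke the preintegration regularity theorems with $\bsnu = \bsone$. Assumption~\ref{asm:phi-h} is imposed for all $\bseta\in\{0,1\}^d$ with $q=0$ (cdf case) or $q=1$ (pdf case), which is exactly the hypothesis of Theorems~\ref{thm:preint_ind} and~\ref{thm:preint_dirac} respectively. These give $P_0 g_t^{\mathrm{ind}}, P_0 g_t^{\delta}\in \calH_d^{\bsone}$ together with the uniform-in-$t$ norm bounds \eqref{eq:norm-cdf} and \eqref{eq:norm-pdf}. Applying Theorem~\ref{thm:qmc} to each preintegrated function yields, for every $\lambda\in(1/\omega,1]$,
\begin{align*}
\sqrt{\bbE_\bsDelta\big[|F(t)-\Fhat_N(t)|^2\big]}
&\le C_{d,\bsgamma,\lambda}\,[\phi_{\mathrm{tot}}(N)]^{-1/(2\lambda)}\,\|P_0 g_t^{\mathrm{ind}}\|_{\calH_d^{\bsone}},\\
\sqrt{\bbE_\bsDelta\big[|f(t)-\fhat_N(t)|^2\big]}
&\le C_{d,\bsgamma,\lambda}\,[\phi_{\mathrm{tot}}(N)]^{-1/(2\lambda)}\,\|P_0 g_t^{\delta}\|_{\calH_d^{\bsone}}.
\end{align*}

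Finally, I would identify the constants explicitly. Substituting the uniform bound \eqref{eq:norm-cdf} on $\|P_0 g_t^{\mathrm{ind}}\|_{\calH_d^{\bsone}}$ into the first line yields the product that defines $C_{F,\lambda}$; substituting \eqref{eq:norm-pdf} into the second yields $C_{f,\lambda}$. Both sums over $\bseta$ are indexed by $\{0,1\}^d$ because $\bsnu = \bsone$. Since all constants are independent of $t$, the bounds \eqref{eq:err-point-F} and \eqref{eq:err-point-f} follow uniformly on $[a,b]$. There is no genuine obstacle in this proof — the technical difficulty has been absorbed into Theorems~\ref{thm:preint_ind}, \ref{thm:preint_dirac}, and~\ref{thm:qmc} — and the only mild care required is to ensure that the hypotheses of Assumption~\ref{asm:phi-h} are indexed over the same set $\{0,1\}^d$ that appears in the sum defining $C_{d,\bsgamma,\lambda}$, which is guaranteed by taking $\bsnu=\bsone$.
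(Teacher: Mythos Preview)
Your proposal is correct and follows essentially the same approach as the paper: write the pointwise error as a QMC integration error for the preintegrated function, invoke Theorem~\ref{thm:preint_ind} (respectively Theorem~\ref{thm:preint_dirac}) with $\bsnu=\bsone$ to place $P_0 g_t$ in $\calH_d^{\bsone}$ with the explicit norm bound \eqref{eq:norm-cdf} (respectively \eqref{eq:norm-pdf}), and then apply Theorem~\ref{thm:qmc} and read off the constants.
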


\begin{proof}
First for the cdf estimator, using \eqref{eq:cdf_simple} and the
definition \eqref{eq:F_N_point} of $\Fhat_N$  we can write the mean-square
error as
\[
\bbE_\bsDelta \big[|F(t) - \Fhat_N(t)|^2\big]
= \bbE_\bsDelta \bigg[\bigg|
\int_{\R^d} \Phi_0(\xi(t, \bsy)) \bsrho(\bsy) \, \rd \bsy -
Q_{d, N}(\Phi_0(\xi(t, \cdot)))
\bigg|^2\bigg].
\]
Then since $\phi$ and $\rho_0$ satisfy Assumption~\ref{asm:phi} and
Assumption~\ref{asm:phi-h} for all $\bseta \in \{0, 1\}^d$ with $q = 0$,
we can apply Theorem~\ref{thm:preint_ind} to show that the preintegrated
function $\Phi_0 (\xi(t, \cdot))$ belongs to $\calH^{\bsone}_{d}$ and its
norm is bounded by \eqref{eq:norm-cdf} with $\bsnu=\bsone$. Substituting
this norm bound into the CBC error bound \eqref{eq:cbc-err} and taking the
square root proves the desired result.

The result for the pdf estimator follows by essentially the same
argument, but with $q=1$ and using the norm bound
\eqref{eq:norm-pdf} in Theorem~\ref{thm:preint_dirac} instead of
Theorem~\ref{thm:preint_ind}.
\end{proof}

Next, we bound the RMISE on $[a, b]$. For the cdf estimator $\Fhat_{N, M}$
the mean-integrated square error (MISE) is formulated as
\[
\bbE_\bsDelta \big[ \|F - \Fhat_{N, M}\|_{L^2}^2\big]
= \bbE_\bsDelta \bigg[ \int_a^b |F(t) - \Fhat_{N, M}(t)|^2 \, \rd t \bigg]\,,
\]
and similarly for $\fhat_{N, M}$.

\begin{theorem}
\label{thm:mise} Let $d \geq 1$, $\bsnu = \bsone \in \N^d$ and $[a, b]
\subset \R$. Suppose $\phi$ and $\rho_0$ satisfy
\[
 \begin{cases}
 \mbox{Assumption~\ref{asm:phi}; and} \\
 \mbox{Assumption~\ref{asm:phi-h} for $\bseta = \bszero$ and all $q \leq d + 1$; and} \\
 \mbox{Assumption~\ref{asm:phi-h} for all $\bseta \in \{0, 1\}^d$ with $q = 0$ for the cdf case; and} \\
 \mbox{Assumption~\ref{asm:phi-h} for all $\bseta \in \{0, 1\}^d$ with $q = 1$ for the pdf case.}
 \end{cases}
\]
Suppose also that $U_t = \R^d$ for all $t \in [a, b]$. Let $Q_{d, N}$ be a
CBC-constructed randomly shifted lattice rule as in \eqref{eq:lattice} and
let $L_M$ be the Lagrange interpolation operator on $[a, b]$ based on Chebyshev points
as in \eqref{eq:L_M}.
Then for $\sigma \in \N$, specified below, and $M > \sigma$,
 the estimators $\Fhat_{N, M}$ and $\fhat_{N, M}$ in
\eqref{eq:F_NM} and \eqref{eq:f_NM} satisfy, for all $\lambda \in
(1/\omega, 1]$,
\begin{align}
\label{eq:err-F_NM}
\sqrt{ \bbE_\bsDelta \big[\|F - \Fhat_{N, M}\|_{L^2}^2\big]}
&\le C_{F,\lambda,\sigma} \Big( \phi_\mathrm{tot}(N)^{-1/(2\lambda)}
+ M^{-\sigma} \Big)
\qquad \text{for } \sigma \leq d,
\\
\label{eq:err-f_NM}
\sqrt{ \bbE_\bsDelta \big[\|f - \fhat_{N, M}\|_{L^2}^2\big]}
&\le C_{f,\lambda,\sigma} \Big( \phi_\mathrm{tot}(N)^{-1/(2\lambda)}
+ M^{-\sigma} \Big)
\qquad \text{for } \sigma \leq d - 1,
\end{align}
where
 $C_{F,\lambda,\sigma} \coloneqq \sqrt{2(b - a)}\,\max(C_{F,\lambda},C_{F,\sigma})$,
 $C_{f,\lambda,\sigma} \coloneqq \sqrt{2(b - a)}\,\max(C_{f,\lambda},C_{f,\sigma})$,
with $C_{F,\lambda}$, $C_{f,\lambda}$ as in Theorem~\ref{thm:err-point},
$\omega \in (1,2]$ as in Theorem~\ref{thm:qmc}, and
\begin{align*}
 C_{F, \sigma}&\coloneqq
 \frac{4(b - a)}{\pi}\, [3(\sigma + 1)]^\sigma\, (\sigma - 1)!\, B_{\sigma + 1, \bszero}^{1/2}, \\
 C_{f, \sigma}&\coloneqq
 \frac{4(b - a)}{\pi}\, [3(\sigma + 1)]^{\sigma + 1}\,(\sigma - 1)!\, B_{\sigma + 2, \bszero}^{1/2}.
\end{align*}
\end{theorem}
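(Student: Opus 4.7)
The plan is to split the total error into a deterministic interpolation piece and a random QMC piece, exploiting the linearity of $L_M$ and the identity $\hat F_{N,M}=L_M\hat F_N$:
\[
F - \hat F_{N,M} \;=\; (F - L_M F) \;+\; L_M(F - \hat F_N).
\]
The first summand is deterministic in $t$, while the second has zero mean under $\bbE_\bsDelta$ because $\hat F_N$ is unbiased at every $t_m$ and $L_M$ is linear. A pointwise bias--variance decomposition followed by Fubini then yields
\[
\bbE_\bsDelta[\|F - \hat F_{N,M}\|_{L^2}^2] \;=\; \|F - L_M F\|_{L^2}^2 \;+\; \bbE_\bsDelta[\|L_M(F - \hat F_N)\|_{L^2}^2],
\]
and the elementary inequality $\sqrt{a^2+b^2}\le\sqrt 2\max(a,b)$ supplies the $\sqrt 2$ in the final constant. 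The factor $\sqrt{b-a}$ will arise from $\|\cdot\|_{L^2[a,b]}\le\sqrt{b-a}\|\cdot\|_{L^\infty[a,b]}$.

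The deterministic piece is handled by the Chebyshev interpolation bound \eqref{eq:interp_err} combined with Theorem~\ref{thm:f-smooth}. The latter provides $\|F^{(\sigma+1)}\|_{L^\infty}\le 3^\sigma\sigma!\,B_{\sigma+1,\bszero}^{1/2}$ provided $\sigma+1\le|\bsnu|+1=d+1$, i.e.\ $\sigma\le d$; for the pdf, $f^{(\sigma+1)}=F^{(\sigma+2)}$ tightens the constraint to $\sigma\le d-1$, explaining the shift in the admissible range. Using $\|F^{(\sigma+1)}\|_{L^1}\le(b-a)\|F^{(\sigma+1)}\|_{L^\infty}$ and the elementary bound $(M-\sigma)^{-\sigma}\le((\sigma+1)/M)^\sigma$ for $M>\sigma$ cleans the rate to $M^{-\sigma}$ and reproduces the constants $C_{F,\sigma}$ and $C_{f,\sigma}$ exactly (the extra factor of $3(\sigma+1)$ in $C_{f,\sigma}$ simply tracks the incremented derivative order).

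For the random piece, the key observation is that $L_M$ commutes with $Q_{d,N}$: for each fixed $t\in[a,b]$,
\[
L_M\hat F_N(t) \;=\; Q_{d,N}(g_t),
\qquad g_t(\bsy) \;\coloneqq\; \sum_{m=0}^M \Phi_0(\xi(t_m,\bsy))\,\chi_{M,m}(t),
\]
while $L_MF(t)=\int_{\R^d}g_t(\bsy)\bsrho(\bsy)\,\rd\bsy$. Thus $L_MF(t)-\hat F_{N,M}(t)$ is a single QMC integration error applied to $g_t$, and Theorem~\ref{thm:qmc} bounds its mean-square pointwise in $t$ by $C_{d,\bsgamma,\lambda}^2\,[\phi_\mathrm{tot}(N)]^{-1/\lambda}\,\|g_t\|_{\calH_d^{\bsone}}^2$. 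Controlling $\|g_t\|_{\calH_d^{\bsone}}$ uniformly in $t$ via Minkowski combined with the uniform-in-$t'$ preintegrated norm bound from Theorem~\ref{thm:preint_ind}, and then integrating over $[a,b]$ and invoking Fubini, reproduces the pointwise RMSE constant $C_{F,\lambda}$ of Theorem~\ref{thm:err-point} up to the $\sqrt{b-a}$ factor. The pdf case is identical with Theorem~\ref{thm:preint_dirac} replacing Theorem~\ref{thm:preint_ind}.

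The main technical difficulty lies in this last step: the naive Minkowski chain $\|g_t\|_{\calH_d^{\bsone}}\le\Lambda_M(t)\sup_{t'}\|\Phi_0(\xi(t',\cdot))\|_{\calH_d^{\bsone}}$ introduces the Lebesgue function, whose $L^2[a,b]$ norm at Chebyshev nodes grows logarithmically in $M$. Obtaining the $M$-independent constant stated in the theorem requires either exploiting a sharper $L^2$-stability property of Chebyshev interpolation or absorbing the $O(\log M)$ growth into constants. Once this step is in place, combining the two bounds via the bias--variance identity and the $\sqrt 2\max(\cdot,\cdot)$ inequality produces exactly $C_{F,\lambda,\sigma}=\sqrt{2(b-a)}\max(C_{F,\lambda},C_{F,\sigma})$ and its pdf analogue.
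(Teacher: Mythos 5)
Your proposal has a genuine gap in the treatment of the random (QMC) piece, and you have correctly diagnosed it yourself without closing it: the decomposition $F-\Fhat_{N,M}=(F-L_MF)+L_M(F-\Fhat_N)$ forces you to bound $\|L_M(F-\Fhat_N)\|$, and the only route you offer is $\|g_t\|_{\calH_d^{\bsone}}\le\Lambda_M(t)\sup_{t'}\|\Phi_0(\xi(t',\cdot))\|_{\calH_d^{\bsone}}$, which introduces the Lebesgue function and hence an uncontrolled $O(\log M)$ factor. Since the theorem asserts an $M$-independent constant multiplying $\phi_{\mathrm{tot}}(N)^{-1/(2\lambda)}$, and $M$ is later coupled to $N$ (Corollary~\ref{cor:error}), this is not a cosmetic issue; a hypothetical "sharper $L^2$-stability property" is never supplied, so the proof is incomplete as written.

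The paper avoids the Lebesgue constant entirely by using the other natural splitting, $F-\Fhat_{N,M}=(F-\Fhat_N)+(\Fhat_N-L_M\Fhat_N)$ (with a factor $2$ from the triangle inequality rather than your exact bias--variance identity). The first term is just the pointwise RMSE of Theorem~\ref{thm:err-point} integrated over $[a,b]$ via Fubini. The second term is the Chebyshev interpolation error of the \emph{realized estimator} $\Fhat_N(\cdot)$, which for every fixed random shift is itself a smooth function of $t$: differentiating \eqref{eq:F_N_point} in $t$ and invoking the expansion \eqref{eq:hyp3} gives $\Fhat_N^{(\sigma+1)}(t)=\frac1N\sum_n\sum_j h_{\sigma+1,\bszero}^{[j]}(t,\bstau_n^{\bsDelta})$, so \eqref{eq:interp_err} applies directly to $\Fhat_N$ with no interpolation-operator norm appearing. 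The remaining quantity $\bbE_\bsDelta[\|\Fhat_N^{(\sigma+1)}\|_{L^1}^2]$ is then controlled by the key device your proposal lacks: since each randomly shifted point $\bstau_n^{\bsDelta}$ is distributed according to $\bsrho$, the shift-average of $|h_{\sigma+1,\bszero}^{[j]}(t,\bstau_n^{\bsDelta})|^2$ equals $\int_{\R^d}|h_{\sigma+1,\bszero}^{[j]}(t,\bsy)|^2\bsrho(\bsy)\,\rd\bsy$, which is bounded by \eqref{eq:h-int} with $\bseta=\bszero$ and $q=\sigma+1$ (this is exactly why the hypothesis requires Assumption~\ref{asm:phi-h} for $\bseta=\bszero$ and all $q\le d+1$). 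Your computation of the deterministic constants $C_{F,\sigma}$, $C_{f,\sigma}$ and of the ranges $\sigma\le d$ versus $\sigma\le d-1$ is correct, but to complete the argument you should switch to the paper's decomposition (or otherwise establish a genuine $M$-uniform stability bound, which you do not have).
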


\begin{proof}
First, consider the cdf estimator $\Fhat_{N, M}$. We can split the MISE
into the QMC and interpolation components
\begin{equation}
\label{eq:mise-split}
\bbE_{\bsDelta} \big[ \|F - \Fhat_{N, M}\|_{L^2}^2\big]
\le
2\,\bbE_\bsDelta \big[ \|F - \Fhat_N\|_{L^2}^2 \big]
+ 2\,\bbE_\bsDelta \big[ \| \Fhat_N - L_M \Fhat_N \|_{L^2}^2 \big]\,.
\end{equation}

The first term in \eqref{eq:mise-split} can be bounded by the pointwise
error as follows. By Fubini's Theorem we may swap the expected value with
respect to $\bsDelta$ and the integral over $[a, b]$ to obtain
\begin{align}
\label{eq:split_qmc}
\bbE_\bsDelta \big[\|F - \Fhat_N\|_{L^2}^2 \big]
&= \int_a^b \bbE_\bsDelta \big[|F(t) - \Fhat_N(t)|^2\big] \, \rd t
 \le (b - a)\, C_{F,\lambda}^2\, \phi_\mathrm{tot}(N)^{-1/\lambda},
\end{align}
where we have substituted in the bound \eqref{eq:err-point-F}.

For the second term in \eqref{eq:mise-split} we will use the Lagrange
interpolation error bound \eqref{eq:interp_err} by first adapting the
proof of Theorem~\ref{thm:f-smooth} to show that $\Fhat_N \in W^{\sigma +
1, \infty}[a, b]$ for $\sigma \leq d$ and all random shifts $\bsDelta$. Differentiating
\eqref{eq:F_N_point} with respect to $t$ then substituting in the formula
\eqref{eq:hyp3} for $q=\sigma+1$ gives
\begin{equation} \label{eq:d-Fhat}
  \Fhat_N^{(\sigma+1)}(t)
= \frac{1}{N}\sum_{n=0}^{N - 1} \pd{\sigma+1}{}{t^{\sigma+1}} \Phi_0\big(\xi(t,\bstau_n^\bsDelta)\big)
= \frac{1}{N}\sum_{n=0}^{N - 1} \sum_{j= 1}^{J_{\sigma+1,\bszero}}
h_{\sigma+1, \bszero}^{[j]} (t,\bstau_n^\bsDelta) \,,
\end{equation}
with $J_{\sigma+1,\bszero} \le 3^\sigma\,\sigma!\,$, where we emphasized
the explicit dependence of each transformed QMC point on the random shift with the
superscript $\bsDelta$. Hence, we can apply \eqref{eq:interp_err} to
obtain
\begin{align}
\label{eq:split_interp1}
\bbE_\bsDelta \big[\|\Fhat_N - L_M \Fhat_N\|_{L^2}^2\big]
 &\le (b - a)\, \bbE_\bsDelta \big[\|\Fhat_N - L_M \Fhat_N\|_{L^\infty}^2 \big]
\nonumber\\
&\le (b - a)\,\bigg(\frac{4}{\pi\sigma (M - \sigma)^{\sigma}} \bigg)^2\,
\bbE_\bsDelta \big[\|\Fhat_N^{(\sigma + 1)}\|_{L^1}^2\big] \nonumber\\
&\le (b - a)\,\bigg(\frac{4\,(\sigma + 1)^\sigma}{\pi\sigma}\bigg)^2\,
\bbE_\bsDelta \big[\|\Fhat_N^{(\sigma + 1)}\|_{L^1}^2\big] \, M^{-2\sigma} \,,
\end{align}
where we used the easily verified inequality $M-\sigma\ge M/(\sigma+1)$
for all $M\ge\sigma+1$.

To bound the expected value in \eqref{eq:split_interp1}, we use the
formula \eqref{eq:d-Fhat}, the Cauchy--Schwarz inequality for integral and
sum, and Fubini's Theorem to obtain
\begin{align} \label{eq:expect}
 \bbE_{\bsDelta} \big[ \|\Fhat_N^{(\sigma+1)}\|_{L^1}^2\big]
 &\le \frac{(b-a)\,J_{\sigma+1,\bszero}}{N} \sum_{n=0}^{N-1} \sum_{j=1}^{J_{\sigma+1,\bszero}}
 \int_a^b \int_{[0,1]^d} | h_{\sigma+1, \bszero}^{[j]}(t,\bstau_n^\bsDelta)|^2 \,\rd\bsDelta \,\rd t \nonumber\\
 &= \frac{(b-a)\,J_{\sigma+1,\bszero}}{N} \sum_{n=0}^{N-1} \sum_{j=1}^{J_{\sigma+1,\bszero}}
 \int_a^b \int_{\bbR^d} | h_{\sigma+1, \bszero}^{[j]}(t,\bsy)|^2\,\bsrho(\bsy) \,\rd\bsy \,\rd t \nonumber\\
 &\le (b-a)^2\, J_{\sigma+1,\bszero}^2\, B_{\sigma+1, \bszero}
 \le [(b-a)\,3^\sigma\,\sigma!]^2\, B_{\sigma+1, \bszero},
\end{align}
where we made a change of variables and then used the upper bound
\eqref{eq:h-int}, which is assumed to hold for all $q = \sigma + 1 \leq |\bsnu| + 1 = d + 1$,
as well as the bound $J_{\sigma+1,\bszero}\le
3^\sigma\,\sigma!\,$. Substituting \eqref{eq:expect} into
\eqref{eq:split_interp1}, we conclude that
\begin{equation}
\label{eq:split_interp}
\bbE_\bsDelta \big[\|\Fhat_N - L_M \Fhat_N\|_{L^2}^2\big]
 \le  (b - a)\, C_{F,\sigma}^2\, M^{-2\sigma}\,,
\end{equation}
with $C_{F,\sigma}$ as defined in the theorem. Substituting
\eqref{eq:split_qmc} and \eqref{eq:split_interp} into
\eqref{eq:mise-split}, we obtain the RMISE bound \eqref{eq:err-F_NM} for
the cdf estimator.

The result for the pdf estimator follows by essentially the same argument.
The key difference is that we must instead bound the norm
$\|\fhat_N^{(\sigma+1)}\|_{L^1}$ for $\sigma + 1 \leq |\bsnu| = d$. 
Similar to the relationship $f = F'$, it
follows from \eqref{eq:dt_Phi0} that $\fhat_N$ is the derivative with
respect to $t$ of $\Fhat_N$. Thus
\begin{align*}
 \fhat_N^{(\sigma+1)}(t) = \Fhat_N^{(\sigma+2)}(t)
= \frac{1}{N}\sum_{n=0}^{N - 1} \pd{\sigma+2}{}{t^{\sigma+2}}
\Phi_0(\xi(t, \bstau_n^\bsDelta))
= \frac{1}{N}\sum_{n=0}^{N - 1} \sum_{j = 1}^{J_{\sigma+2,\bszero}}
h_{\sigma+2, \bszero}^{[j]} (t, \bstau_n^\bsDelta) \,,
\end{align*}
with $J_{\sigma+2,\bszero} \le 3^{\sigma+1}\,(\sigma+1)!\,$. Following the
same steps as before, we obtain
\[
 \bbE_\bsDelta\big[
 \|\fhat_N^{(\sigma+1)}\|_{L^1}^2\big] \le [(b - a)\,3^{\sigma+1}\,(\sigma+1)!]^2\, B_{\sigma + 2, \bszero}\,,
\]
and eventually arrive at the RMISE bound \eqref{eq:err-f_NM} for the
pdf estimator.
\end{proof}

\begin{remark}
In Theorem~\ref{thm:mise}, we assume the minimal smoothness required
to apply the QMC theory from Theorem~\ref{thm:qmc} (see also \cite{NK14}), i.e., $\bsnu = \bsone$,
which in most cases is sufficient to also handle the interpolation error.
For the special case $d = 1$, the interpolation component of the error converges linearly for $\Fhat_{N, M}$,
i.e., $\sigma = 1$ in \eqref{eq:err-F_NM}, but does not converge for $\fhat_{N, M}$, i.e., $\sigma = 0$ in 
\eqref{eq:err-f_NM}. Under stricter smoothness assumptions, one can of course prove higher convergence
rates for the interpolation error when $d = 1$.
However, in this case, after preintegration one only needs to approximate a one-dimensional
integral, for which it is more appropriate to use a one-dimensional quadrature rule, e.g., Gauss--Hermite,
instead of QMC. Such a rule would additionally exploit any higher smoothness assumptions
to obtain higher-order convergence for the quadrature error.
\end{remark}

Theorem~\ref{thm:mise} implies that we can take $\sigma$ up to $d - 1$ 
(or $d$ for the cdf approximation)
and obtain a very fast convergence rate in terms of $M$. However, as $\sigma$
increases the constant increases significantly. Hence, in practice one
should take a moderate value for $\sigma$, e.g., around 2--5.

To see how Theorem~\ref{thm:mise} applies in practice, let now $N$ be a prime or a prime power,
in which case $\phi_\mathrm{tot}(N) \sim N$.
Then to balance the QMC and interpolation error,
Theorem~\ref{thm:mise} implies that we should take $M \sim N^{1/\sigma}$.
The final result is that the estimators converge at a rate arbitrarily close to $1/N$.
It is summarised in the following Corollary.

\begin{corollary}
\label{cor:error} Let $d \geq 2$ and suppose that the conditions in Theorem~\ref{thm:mise}
hold. Let $N$ be a prime power and choose $M \sim N^{1/\sigma}$ for a
moderate $\sigma \leq d - 1$. Then, for $\epsilon > 0$ there exist
constants $C_{F, \epsilon}$, $C_{f, \epsilon} < \infty$ such that the
error of the cdf and pdf estimators satisfy
\begin{align*}
\sqrt{\bbE_{\bsDelta} \big[ \|F - \Fhat_{N, M}\|_{L^2}^2\big]}
&\,\leq\, C_{F, \epsilon} \, N^{-1 + \epsilon},
\quad
\\
\sqrt{\bbE_{\bsDelta} \big[ \|f - \fhat_{N, M}\|_{L^2}^2\big]}
&\,\leq\, C_{f, \epsilon} \, N^{-1 + \epsilon}\,.
\end{align*}
\end{corollary}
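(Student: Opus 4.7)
The plan is to apply Theorem~\ref{thm:mise} directly and then bound each of the two terms on the right-hand side separately. Theorem~\ref{thm:mise} gives, for the cdf estimator, a bound of the form
\[
 \sqrt{\bbE_\bsDelta \big[\|F - \Fhat_{N,M}\|_{L^2}^2\big]}
 \le C_{F,\lambda,\sigma}\,\big(\phi_\mathrm{tot}(N)^{-1/(2\lambda)} + M^{-\sigma}\big),
\]
valid for any $\lambda\in(1/\omega,1]$ and any $M>\sigma$, and an analogous bound for $\fhat_{N,M}$. So the task reduces to showing that with the stated choices of $N$ and $M$, both summands are $\bigO(N^{-1+\epsilon})$.

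For the interpolation term, I would take $M = \lceil c_1 N^{1/\sigma}\rceil$ for a constant $c_1 \ge 1$; then $M^{-\sigma} \le c_1^{-\sigma}\,N^{-1}$, which is already better than $N^{-1+\epsilon}$. (For large enough $N$ we will automatically have $M>\sigma$.) For the QMC term, since $N=p^k$ is a prime power, the Euler totient satisfies $\phi_\mathrm{tot}(N) = N(1-1/p) \ge N/2$, so
\[
 \phi_\mathrm{tot}(N)^{-1/(2\lambda)} \le 2^{1/(2\lambda)}\,N^{-1/(2\lambda)}.
\]
Given $\epsilon>0$, I would then choose $\lambda\in(1/\omega,1]$ close enough to $1/\omega$ so that $1/(2\lambda) \ge 1-\epsilon$, which requires $\omega$ sufficiently close to $2$. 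This brings the QMC part down to $\bigO(N^{-(1-\epsilon)})$. Adding the two contributions and absorbing all prefactors (which depend on $\epsilon$, $\sigma$, the weights $\bsgamma$, the constants $B_{q,\bseta}$, and $b-a$, but not on $N$) into a single constant $C_{F,\epsilon}$ yields the claimed bound, and the identical argument with $C_{f,\lambda,\sigma}$ in place of $C_{F,\lambda,\sigma}$ gives the pdf bound.

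There is no serious obstacle here: the corollary is essentially just balancing the two terms in Theorem~\ref{thm:mise} and exploiting $\phi_\mathrm{tot}(N)\sim N$ for prime powers. The only point worth spelling out carefully is the dependence of $\epsilon$ on $\omega$: the statement ``for every $\epsilon>0$'' implicitly assumes we are in the favourable regime where $\omega$ can be taken arbitrarily close to $2$ (as discussed after Theorem~\ref{thm:qmc}, e.g.\ when each $\rho_i$ is standard normal paired with a suitable $\psi_i$); otherwise the achievable exponent is limited to $\omega/2$ rather than $1$. I would make this dependence explicit in the write-up so the reader sees exactly which $\lambda$ is selected for a prescribed $\epsilon$, and then simply verify that the resulting constant is finite under the assumptions inherited from Theorem~\ref{thm:mise}.
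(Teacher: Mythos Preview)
Your proposal is correct and matches the paper's own argument, which is given informally in the paragraph preceding the corollary: use $\phi_\mathrm{tot}(N)\sim N$ for prime powers and balance the two terms in Theorem~\ref{thm:mise} by taking $M\sim N^{1/\sigma}$. Your additional remark about the implicit requirement that $\omega$ be close to $2$ for the ``for every $\epsilon>0$'' claim to hold is a valid and useful clarification that the paper leaves unstated.
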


\section{Numerical results}

\label{sec:numerics}
To test the method, we consider approximating the cdf and pdf of a random
variable $X \in \R$ given by a sum of $d + 1$ log-normals,
\begin{equation}
\label{eq:X_lognorm}
 X \,=\, \sum_{i = 0}^d \exp ( W_i)
 \,=\, \sum_{i = 0}^d \exp( \bsA_i \bsY)
 \,\eqqcolon\, \phi(\bsY),
\end{equation}
where $\bsW = (W_i)_{i = 0}^d$ is a $(d+1)$-dimensional multivariate
normal vector with mean~$\bszero$ and covariance $\Sigma$.
In the second equality, we have factorised the
covariance matrix as $\Sigma = AA^\top$ and made the change of variables
$\bsY = A^{-1} \bsW$, so that $\phi$ is a function of
the $(d + 1)$-dimensional standard normal vector $\bsY = (Y_i)_{i = 0}^d$.
In \eqref{eq:X_lognorm}, $\bsA_i$ denotes the $i$th row of the matrix factor $A$.
We use the \emph{principal components} or \emph{PCA} factorisation,
which is based on the eigendecomposition of $\Sigma$ with
the eigenvalues ordered in nonincreasing value.
Clearly, $X$ fits the setting of this paper with
$\rho_i(y_i) = e^{-y_i^2/2}/\sqrt{2\pi}$.

We test the method for two covariance matrices. The first example is
in $d + 1 = 32$ dimensions with covariance matrix $\Sigma^{(1)}$ and
the second example takes $d + 1 = 64$ and a covariance matrix $\Sigma^{(2)}$ with
entries that are decaying in value:
\begin{align*}
\Sigma^{(1)}_{i, j} =
\begin{cases}
1 & \text{for } i = j\,,\\
\frac{1}{2} & \text{for } i \neq j\,,
\end{cases}
\quad \text{and} \quad
\Sigma^{(2)}_{i, j} = \frac{1}{\max(i, j)}\,.
\end{align*}
It can easily be verified that Assumption~\ref{asm:phi} and
Assumption~\ref{asm:phi-h}, with the weight functions
$\psi_i(y) = e^{-\delta y^2/ 2}$ for $\delta \in (0, \tfrac{1}{2})$,
are satisfied for both covariance matrices. For this choice of
$\psi_i$, Table~3 in \cite{KSWW10} indicates that
\eqref{eq:omega} holds with $\omega = 2(1 - \delta)$, giving a QMC
convergence rate of $1 - \delta$. Choosing the weight parameters
$\{\gamma_\bseta\}$ and performing a full error analysis that is explicit
in the dependence on the dimension is left for future work.

For the QMC approximations we use embedded lattice rules given by
the CBC construction from \cite{CKN06} and which are available at
\cite{Kuo_lattice}. Although there is no accompanying theory, these rules
have been shown to work well in practice. For $\Sigma^{(1)}$, we use the
generating vectors \texttt{lattice-38005-1024-1048576.5000}, constructed
using equal product weights $\gamma_i = 0.05$,
 and for $\Sigma^{(2)}$, we use
\texttt{lattice-39102-1024-1048576.3600}, constructed using decaying
product weights $\gamma_i = 1/i^2$.

The final estimate is the average over $R = 32$ random shifts, and we
estimate the RMSE of this average by the sample standard error over
the random shifts. For a fair comparison, each MC approximation
uses $R\times N$ points in total and the RMSE is estimated by the sample
standard error over all MC realisations. For each $\bstau_n$,
the value of $\xi(t,\bstau_n)$ is computed using Newton's method with
a tolerance of $10^{-10}$. All computations were run on the computational
cluster Katana \cite{katana}
at UNSW Sydney.

Convergence results for the cdf and pdf at the point $t = 60$ for both
covariance matrices are given in Figures~\ref{fig:pointwise1} and \ref{fig:pointwise2} for $N =
2^{10}, 2^{11}, \ldots, 2^{20}$, where we plot the approximate relative RMSE
(the RMSE divided by the estimated value). We see clearly that preintegration
drastically improves the empirical results for QMC, especially for the
matrix $\Sigma^{(2)}$ which has decaying eigenvalues. The results for
$\Sigma^{(1)}$ are similar to those presented in \cite{BotSalMac19}.
A possible explanation for the better results observed
for $\Sigma^{(2)}$, compared to $\Sigma^{(1)}$, is that although formally the dimension is larger,
this problem may have a lower effective dimension.
Since the eigenvalues of $\Sigma^{(2)}$ are decaying, this suggests that
the variables are also decaying in importance, whereas
the largest eigenvalue of $\Sigma^{(1)}$ is simple and
the remaining $d = 31$ eigenvalues are equal, suggesting that
after preintegration the remaining variables are all equally important.

\begin{figure}[t]
\centering
\includegraphics[scale=0.4]{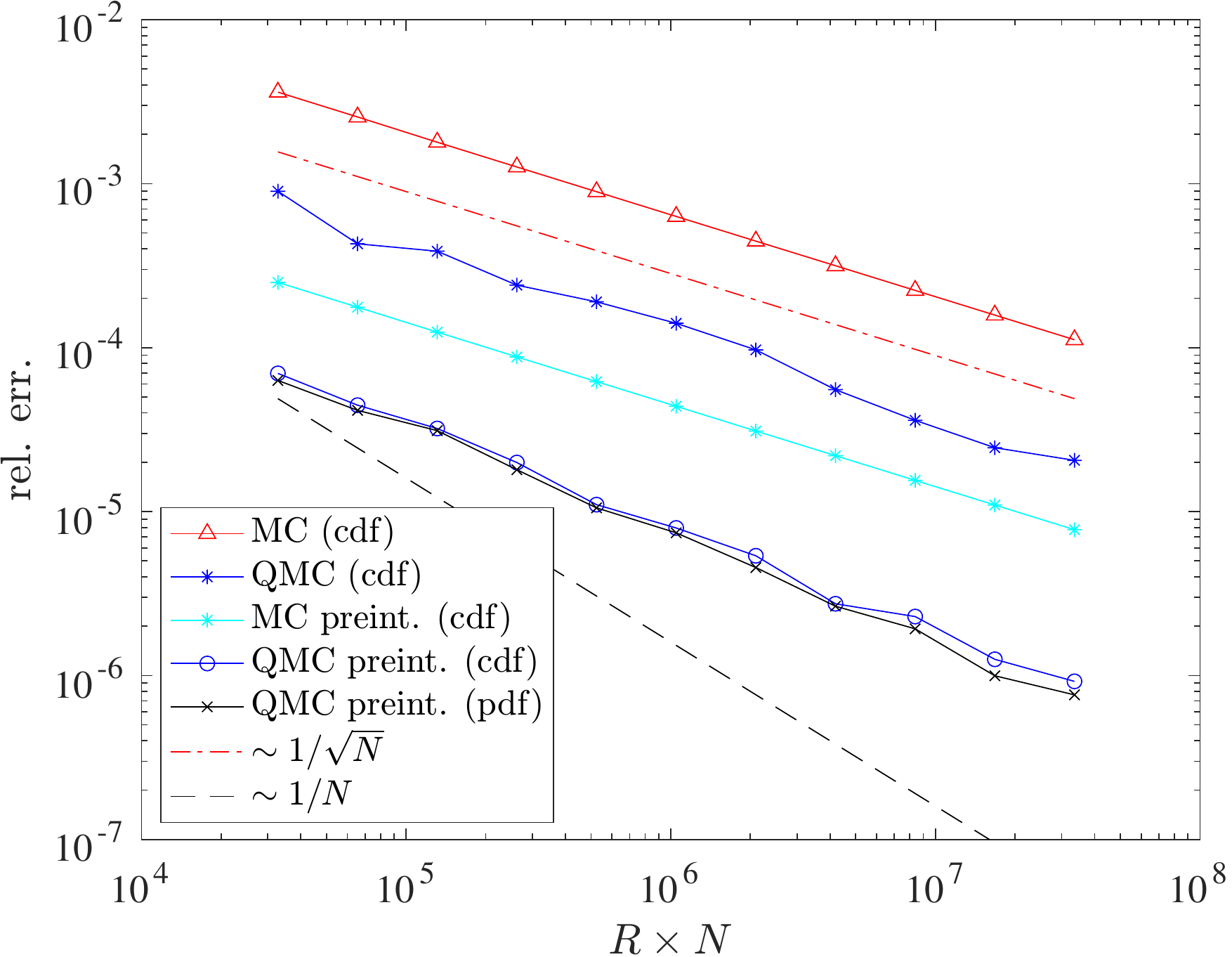}
\caption{Relative RMSE convergence in $N$ for MC and QMC, with and without preintegration,
for $F(60) = \bbP[ X \leq 60]$, and also QMC with preintegration
for $f(60)$, for $\Sigma^{(1)}$.}
\label{fig:pointwise1}
\end{figure}
\begin{figure}[t]
\centering
\includegraphics[scale=0.4]{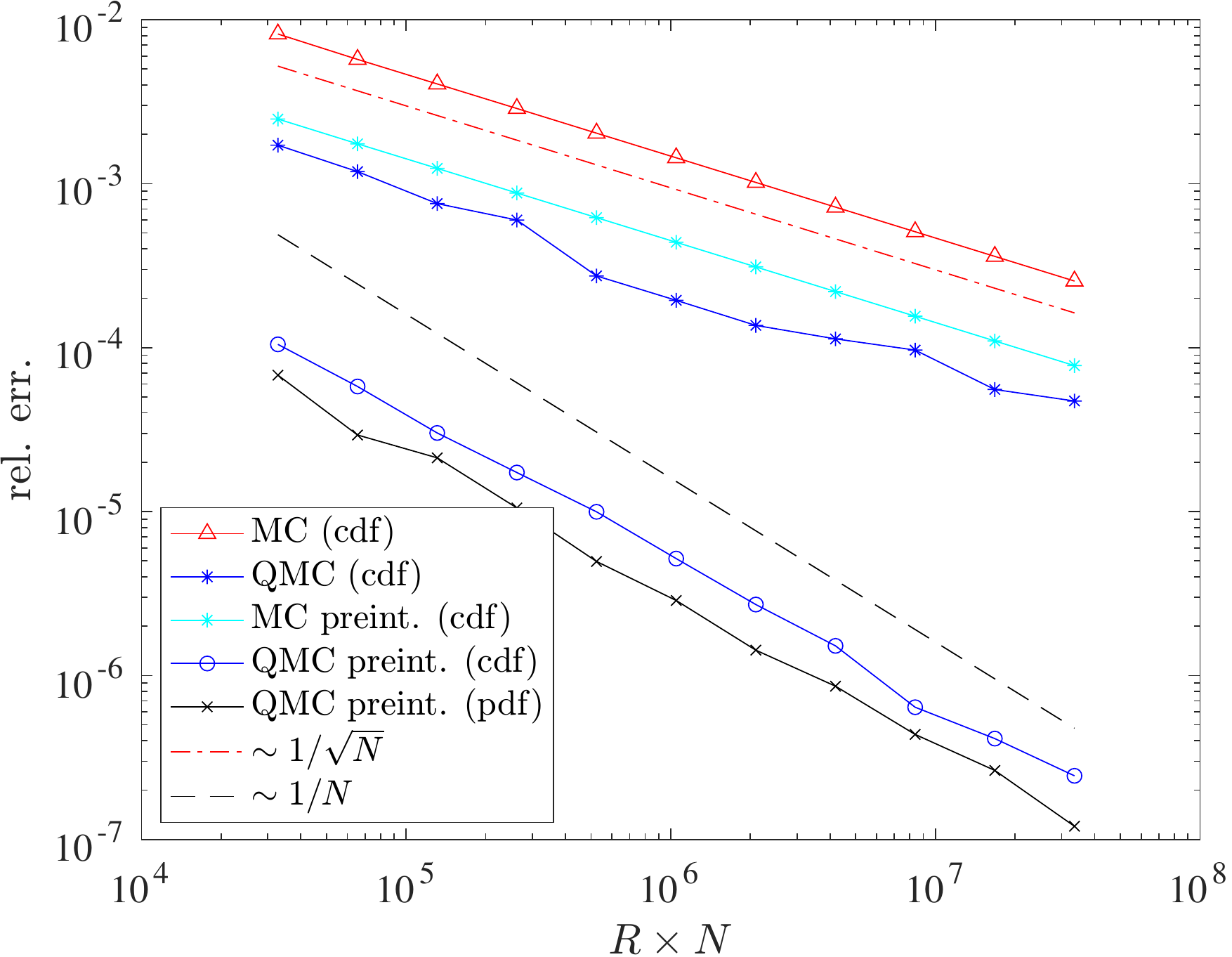}
\caption{Relative RMSE convergence in $N$ for MC and QMC, with and without preintegration,
for $F(60) = \bbP[ X \leq 60]$, and also QMC with preintegration
for $f(60)$, for $\Sigma^{(2)}$.}
\label{fig:pointwise2}
\end{figure}


Tables~\ref{tab:time1} and \ref{tab:time2} give the CPU times
for the QMC and QMC with preintegration approximations for the cdf at $t = 60$,
as well as the QMC with preintegration approximation of the pdf at $t = 60$,
for $\Sigma^{(1)}$ and $\Sigma^{(2)}$, respectively.
The timing tests were run on a single processor
for $N = 2^{13}, 2^{14}, \ldots, 2^{20}$ with a single random shift.
As expected, the CPU time increases linearly with $N$.
In the last rows, we give the increase factor of the CPU time
for QMC with preintegration compared to plain QMC for the cdf.
Since this factor is around 2 in all cases, it is much less compared to the
the error reduction observed in Figures~\ref{fig:pointwise1} and \ref{fig:pointwise2},
which ranges from 10 to over 100.
This demonstrates that preintegration is well worth the slight increase in cost.

\begin{table} [t]
\centering
\begin{tabular}{|c||r|r|r|r|r|r|r|r|}
\hline
$N$  & 
 $2^{13}$ & $2^{14}$ & $2^{15}$ & $2^{16}$ & $2^{17}$ & $2^{18}$
& $2^{19}$ & $2^{20}$
\\
\hline
QMC (cdf) &       
    0.05 &    0.11 &    0.23 &    0.46 &    0.95 &    1.92 &    3.59 &    7.17 \\
QMC preint.~(cdf) &  
    0.13 &    0.25 &    0.50 &    1.01 &    2.03 &    4.11 &    8.09 &   16.12
\\
QMC preint.~(pdf) &    
    0.09 &    0.18 &    0.34 &    0.72 &    1.36 &    2.89 &    5.41 &   10.88
    \\
\hline
increase factor (cdf) &     
 2.6 &    2.3 &    2.2 &    2.2 &    2.1 &    2.1 &    2.3 &    2.2
 \\
 \hline
\end{tabular}
\label{tab:time1}
\caption{CPU times for QMC approximations with a single random shift for $\Sigma^{(1)}$.}
\begin{tabular}{|c||r|r|r|r|r|r|r|r|}
\hline
$N$  & 
$2^{13}$ & $2^{14}$ & $2^{15}$ & $2^{16}$ & $2^{17}$ & $2^{18}$ & $2^{19}$ & $2^{20}$
\\
\hline
QMC (cdf) &       
     0.08 &    0.17 &    0.32 &    0.67 &    1.28 &    2.60 &    5.14 &   10.22
\\
QMC preint.~(cdf) &  
    0.15 &    0.31 &    0.63 &    1.31 &    2.49 &    5.03 &   10.04 &   19.83
\\
QMC preint.~(pdf) &    
    0.12 &    0.23 &    0.49 &    0.92 &    1.92 &    3.71 &    7.49 &   14.68
    \\
\hline
increase factor (cdf) &     
    1.9 &    1.8 & 2.0 & 2.0 &    1.9 &    1.9 &    2.0 &    1.9
\\
\hline
\end{tabular}
\caption{CPU times for QMC approximations with a single random shift for $\Sigma^{(2)}$.}
\label{tab:time2}
\end{table}

In Figure~\ref{fig:cdfpdf}, we plot the QMC with preintegration estimators
for both the cdf $F$ (left) and pdf $f$ (right)
on the interval $[40,100]$. Figure~\ref{fig:rmise} then plots the
convergence of the RMISE. The
approximations of the cdf and pdf use $N = 2^{20}$ QMC points averaged
over $R=32$ random shifts and degree $M = 42$ Lagrange interpolation based
on Chebyshev points. For the RMISE we use $N = 2^{10}, 2^{11}, \ldots,
2^{19}$ with interpolation degree $M = \lceil N^{1/4} \rceil + 10$ and $R =
32$ random shifts. In this way, the number of interpolation points is coupled
to the number of QMC points as in Corollary~\ref{cor:error},
with the choice $\sigma = 4$.
To estimate the RMISE of $\Fhat_{M, N}$ and $\fhat_{M, N}$,
we first approximate the $L^2$
error by comparing each approximation to an approximation with much higher
precision (i.e., $M=42$, $N=2^{20}$, $R=32$) treated as the true cdf or
pdf. Then we approximate the mean with respect to $\bsDelta$ by averaging the $L^2$
error over the random shifts.
This captures both the QMC and interpolation contributions to the RMISE
As expected from Corollary~\ref{cor:error},
we observe almost $1/N$ convergence for the RMISE. This demonstrates that the method is an
effective practical strategy.

\begin{figure}[t]
\centering
\includegraphics[scale=0.3]{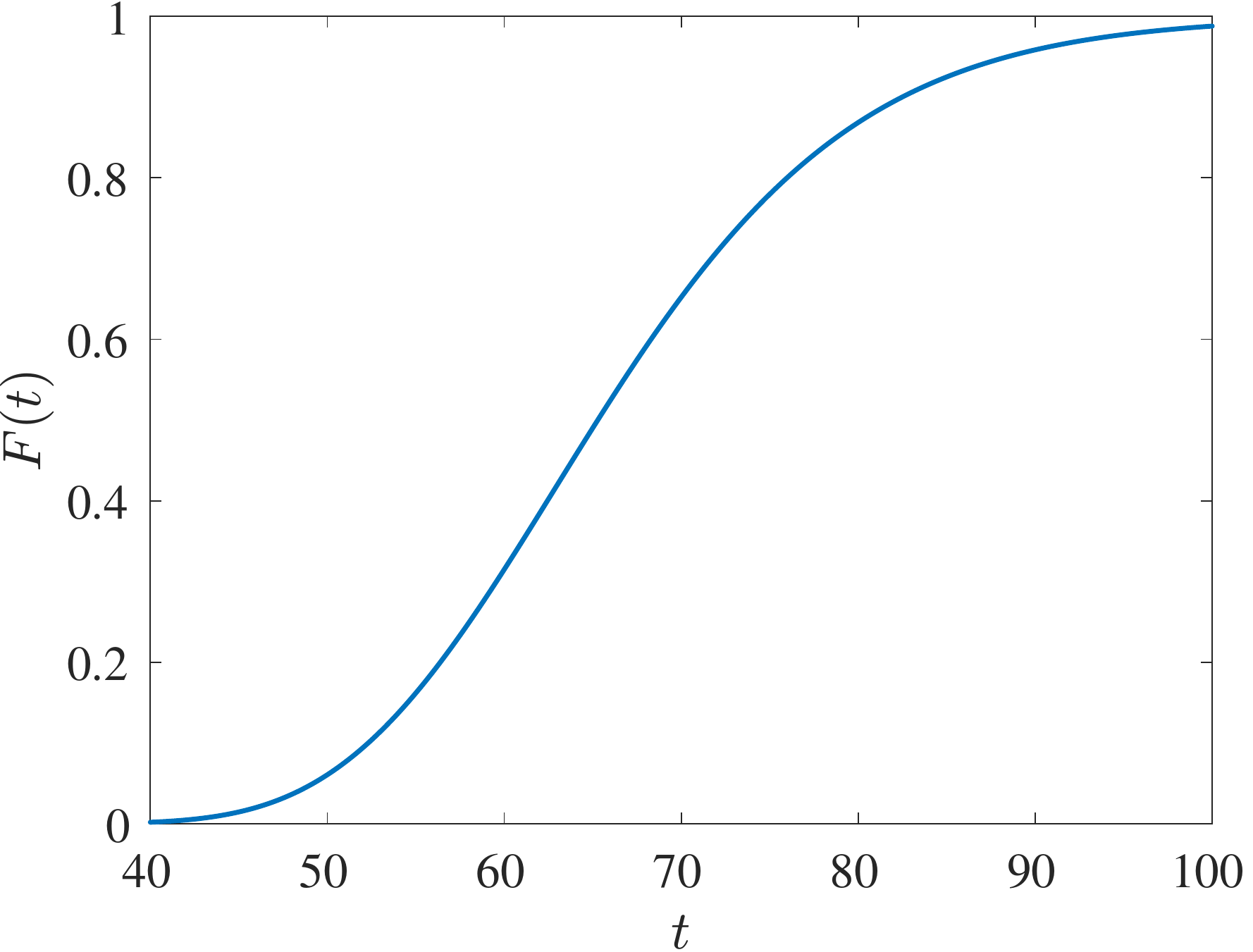}
~~
\includegraphics[scale=0.3]{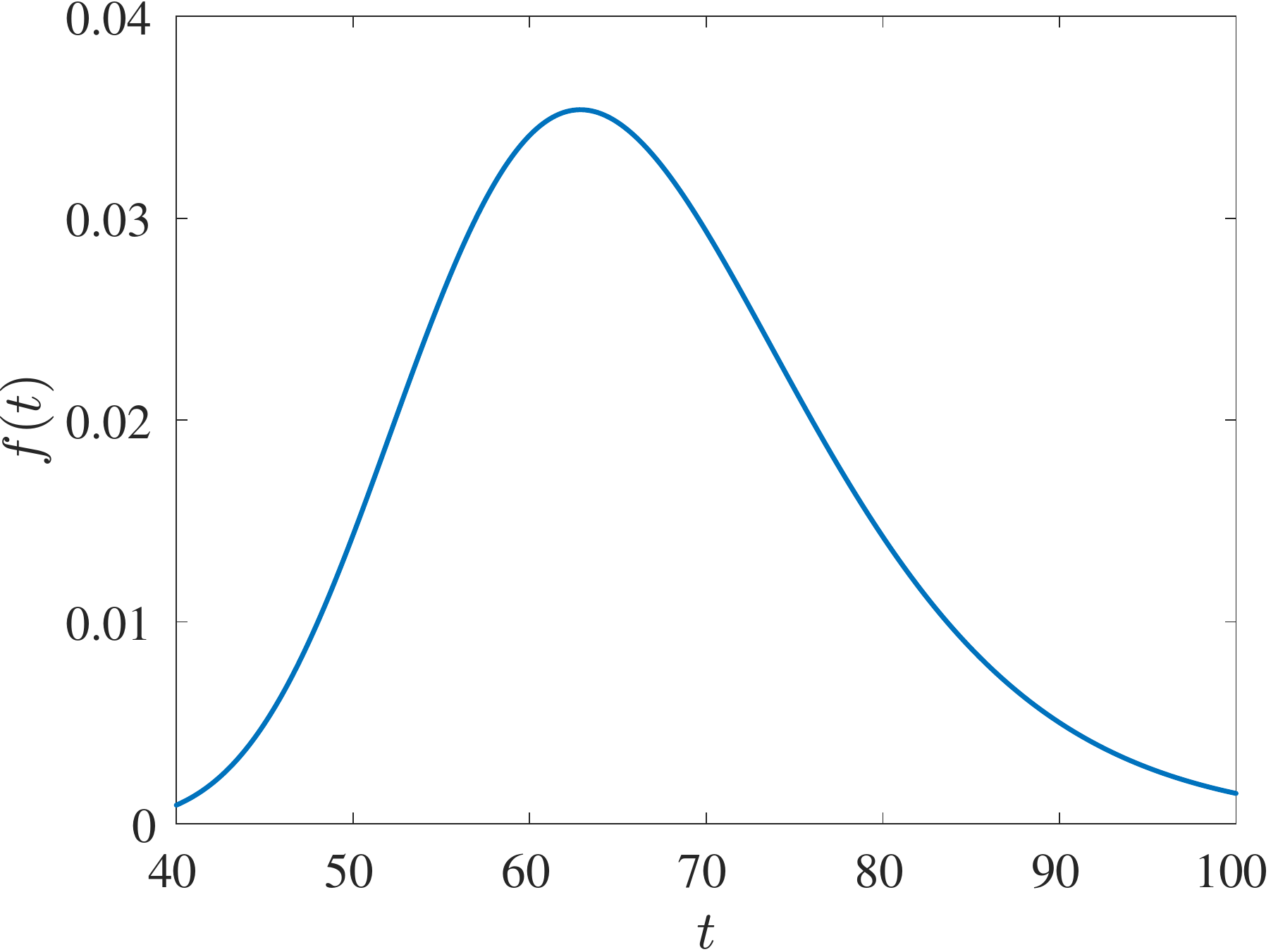}
\caption{Approximate cdf (left) and approximate pdf (right) for $\Sigma^{(2)}$.}
\label{fig:cdfpdf}
\end{figure}
\begin{figure}[!h]
\centering
\includegraphics[scale=0.4]{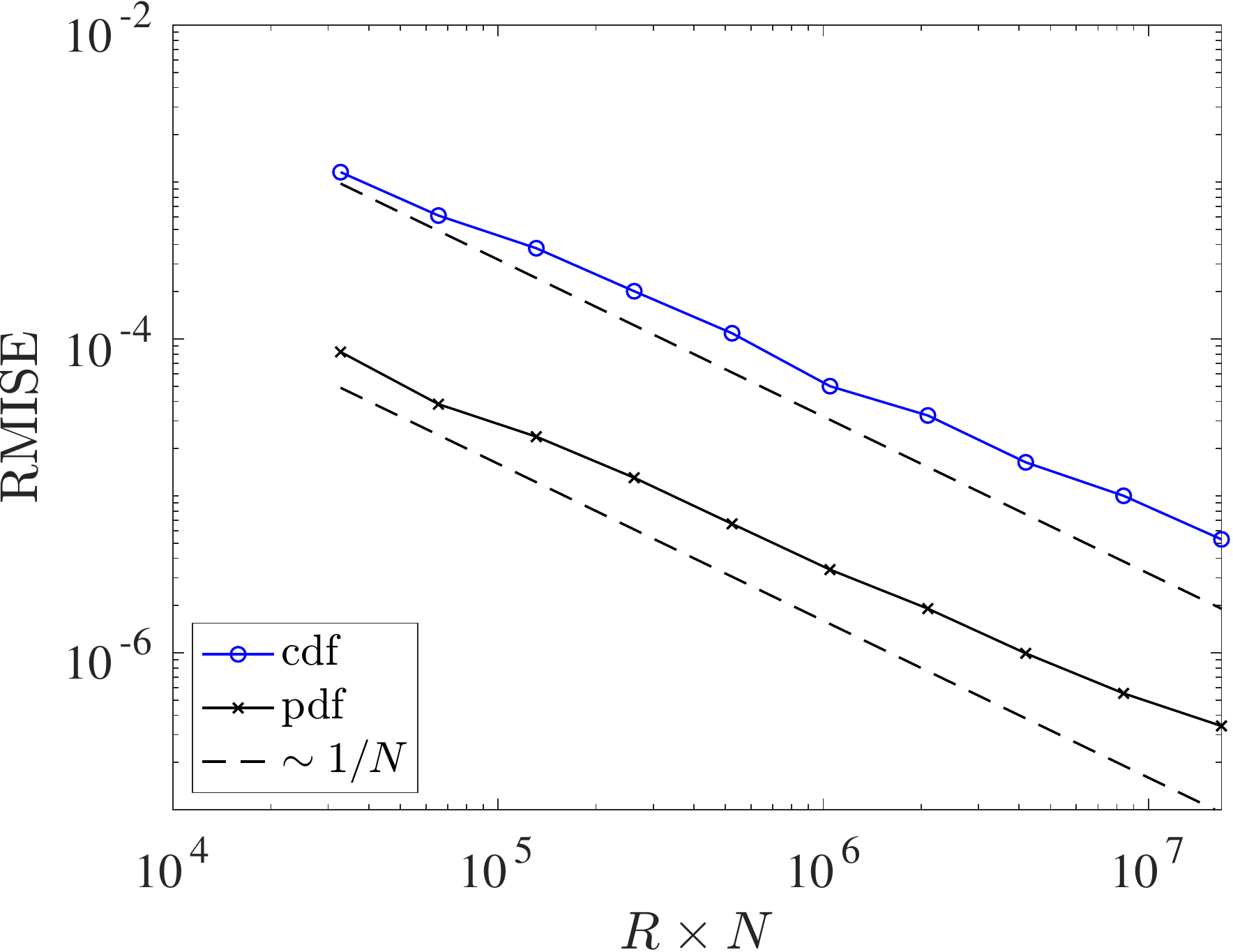}
\caption{Convergence of RMISE in $N$ for $\Sigma^{(2)}$.}
\label{fig:rmise}
\end{figure}


\bibliographystyle{plain}

\begin{appendix}

\section{Illustrative examples}

\subsection{Some functions in $\calH^{\bsone}_{d}$ are not in BVHK after transformation}
\label{app:1}

Mapping a weighted integral of an unbounded function $g$ on $\R^{d}$
to the $d$-dimensional unit cube as in \eqref{eq:qmc} clearly results in
an unbounded integrand over $[0,1]^{d}$, since the closure of the range of
$g$ is unchanged by the transformation. This precludes membership of BVHK
for the transformed integrand. For example, letting $d = 1$ and
$g(y)=y^2$, and taking $\rho$ to be the standard normal Gaussian density,
results in an unweighted integral over $[0,1]$ of an unbounded
function, namely $\Phi^{-1}(u)^2$. An unbounded function of a single
variable does not have bounded variation. The argument applies equally to
any non-constant polynomial on $\R^d$, and to any unbounded function $g$
that is integrable after multiplication by $\rho$.

On the other hand, choosing the weight function $\psi(y) = e^{-|y|}$,
it is easily seen that $g(y) = y^2$, or any polynomial, belongs to
$\calH^{\bsone}_1$. Furthermore, for this pair of $\rho$ and~$\psi$,
Theorem~\ref{thm:qmc} holds with $\omega$ arbitrarily close to 2, and
hence we get nearly first order convergence.

Thus the setting used in the present paper allows a wider class of
problems than assuming that the transformed function is in BVHK.

\subsection{Smoothness fails for preintegrated functions if the random variables $Y_i$ have compact support}
\label{app:2}

The following example, defined on $\R^4$, conforms with the definitions in the Introduction for $d=3$, except that in this case the underlying independent random variables $Y_i$ have support $[0,1]$, and have uniform distribution, corresponding to $\rho_i(y) = 1$ for $y \in [0, 1]$ and
$\rho_i(y) = 0$ elsewhere.   We take
\begin{equation*}
\phi(\bsy) \coloneqq \phi(y_0, y_1, y_2, y_3) =y_0 - \frac{y_1}{3}- \frac{y_2}{3}- \frac{y_3}{3}+\frac{1}{2}, \quad \bsy  \in [0,1]^4.
\end{equation*}
Preintegration with respect to $y_0$ of the indicator function composed with  $\phi$  yields
\begin{align}\label{eq:preintexample}
 \int_0^1 \ind(\phi(\bsy))\,\rd y_0
 &= \int_0^1\ind \Big(y_0 - \frac{y_1}{3}- \frac{y_2}{3}- \frac{y_3}{3}+\frac{1}{2}\Big)\,\rd y_0
 \nonumber\\
 & = \int_{(\frac{y_1}{3}+ \frac{y_2}{3}+ \frac{y_2}{3}-\frac{1}{2})_+}^1  1 \,\rd y_0
 = 1 - \Big(\frac{y_1}{3}+ \frac{y_2}{3}+ \frac{y_3}{3}-\frac{1}{2}\Big)_+\,,
\end{align}
which is continuous on $[0,1]^3$, but has a discontinuous gradient
across the plane
\begin{equation}\label{eq:plane}
y_1 + y_2 + y_3 = \frac{3}{2}.
\end{equation}

The preintegrated function \eqref{eq:preintexample} is not in the class
BVHK, and hence does not have the smoothness assumed for QMC.  To see
this, for $n$ a positive even integer, consider all the  cubes of edge
length $1/n$ with one vertex at $(\tfrac{i_1}{n}, \tfrac{i_2}{n},
\tfrac{i_3}{n})$ and the diagonally opposite vertex at $(\tfrac{i_1+1}{n},
\tfrac{i_2+1}{n}, \tfrac{i_3+1}{n})$, with $i_1, i_2, i_3 \in \{0, 1,
\ldots, n-1\}$ and such that $i_1 + i_2 + i_3 = \tfrac{3}{2}n - 2$.  For
each such cube, the vertex  $(\tfrac{i_1 + 1}{n}, \tfrac{i_2 + 1}{n},
\tfrac{i_3 + 1}{n})$ satisfies
\[
\frac{y_1}{3}+ \frac{y_2}{3}+ \frac{y_3}{3} - \frac{1}{2} = \frac{1}{3n} > 0,
\]
and hence $(\frac{y_1}{3}+ \frac{y_2}{3}+ \frac{y_3}{3}-\frac{1}{2})_+$
has the value $\tfrac{1}{3n}$ at that vertex, while having the
value~$0$ at the other seven vertices, since they lie on or
below the plane \eqref{eq:plane}. Thus each small cube contributes
$\tfrac{1}{3n}$ to the variation in the sense of Vitali \cite{Nie92}.
Moreover, because the plane \eqref{eq:plane} is a $2$-dimensional manifold
with a non-trivial intersection with the unit cube, it is easy to see that
there are of exact order $n^2$ such cubes in the unit cube. Thus the total
variation in the sense of Vitali has a lower bound of exact order $n$.
Letting $n\to\infty$, it follows that the variation in the sense of
Vitali, and hence also of Hardy and Krause, is infinite. Similar
results also hold in higher dimensions.

The paper \cite{GKS10} shows that for the case of the cube, as in this
example, preintegration needs to be performed repeatedly if the mixed
derivative smoothness is to be improved, with each successive
preintegration yielding at most one additional order of smoothness. This
implies that, in general, for the unit cube one must perform
preintegration with respect to at least $d/2$ different variables to
obtain first-order mixed derivative smoothness.

\subsection{The monotonicity condition is necessary}
\label{app:3}

Consider the bivariate  function $g(y_1,
y_2)\coloneqq\sqrt[m]{(y_1-y_2)_+}$, for $m\ge 2$, on $[0,1]^2$.  This is
a simplified model (simplified in being restricted to the unit square
instead of $\R^d$) of the typical singularity shown in \cite{GKS22b} to
arise when monotonicity with respect to the preintegration variable fails.
A lower bound on the variation in the sense of Vitali, and hence of
Hardy and Krause, can be obtained by taking a uniform  partition of edge
length $1/n$ of the unit square, and then considering the contribution to
the variation in the sense of Vitali from only those squares of size $1/n$
that are bisected by the main diagonal. From the values of $g$ at the four
vertices, each such square contributes $|0 - 0 + 0  - \sqrt[m]{1/n}| =
\sqrt[m]{1/n}$. Since there are $n$ such squares, a lower bound on the
variation of $g$ is $n^{1-1/m}$, which for all $m\ge 2$ is unbounded as $n
\to \infty$. Thus $g$ is not in the class BVHK.

\subsection{Implications for analysis of QMC after preintegration}

The results in \cite{GKS10} and the example in Section~\ref{app:2}
show that preintegration is not guaranteed to be an effective method on compact domains, 
e.g., the unit cube.
Hence, in this work we assume that the random variables $Y_i$, $i = 0, 1, \ldots d$, have support on the whole real line,
resulting in an integration problem defined on $\R^{d + 1}$, and we then use the theory from \cite{GKLS18}, which proves
that on $\R^{d + 1}$ one step of preintegration is sufficient (under certain conditions).
Furthermore, it is well-known that for problems on unbounded domains, 
the strategy of mapping back to the unit cube and using the Koksma--Hlawka inequality has the
drawback that it cannot handle unbounded integrands, because they are not in BVHK
(see also Section~\ref{app:1}). Hence, in this paper we use the setting introduced in \cite{NK14} (specifically an equivalent space),
which can handle unbounded integrands by an appropriate choice
of the weight functions $\{\psi_i\}$.
The BVHK setting may allow for certain functions
that do not have square-integrable mixed derivatives, as assumed in our setting.

In summary, preintegration is most effective for problems on unbounded domains,
for which the results in \cite{NK14} provide the appropriate setting to perform QMC error analysis.
On the other hand, Section~\ref{app:3} and \cite{GKS22b} show that if the monotonicity 
condition fails then there may exist a square-root singularity, 
which can neither be handled by BVHK nor by our setting.

\section{Technical results}
\label{app:tech}

\begin{lemma} \label{lem:count-y}
Let $d\ge 1$, $\bsnu\in \bbN_0^d$, and $[a,b]\subset\bbR$. Suppose that
$\phi$ and $\rho_0$ satisfy Assumption~\ref{asm:phi}, and recall the
definitions of $U_t$, $\xi$ and $V$ in \eqref{eq:Ut}, \eqref{eq:xi} and
\eqref{eq:V}, respectively. For any $q\in\bbN_0$ and $\bseta\le\bsnu$
satisfying $|\bseta|+q \le |\bsnu|+1$, we consider functions
$h_{q,\bseta}: V \to \R$ of the form \eqref{eq:h-form}. Then for any $i\in
\{1:d\}$, we can write
\begin{align*}
 D^i h_{q,\bseta}(t,\bsy)
 = \sum_{k=1}^{K_{q,\bseta}} h_{q,\bseta+\bse_i}^{[k]}(t,\bsy)
 \quad\mbox{with}\quad
  K_{q,\bseta} \le 8|\bseta| +6q-3,
\end{align*}
where each function $h_{q,\bseta+\bse_i}^{[k]}$ is of the form
\eqref{eq:h-form} with $\bseta$ replaced by $\bseta+\bse_i$.
\end{lemma}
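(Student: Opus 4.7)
The plan is to differentiate the explicit formula \eqref{eq:h-form} directly using the product and chain rules, re-express each resulting subterm in the form \eqref{eq:h-form} with $\bseta$ replaced by $\bseta+\bse_i$, and then tally the subterms. The key reduction is the identity $D^i\xi=-D^i\phi/D^0\phi$ from \eqref{eq:Di-xi}, which I substitute whenever the chain rule on a composition through $\xi(t,\bsy)$ produces a factor $D^i\xi$; this absorbs the new $D^0\phi$ into the denominator so that the result still fits the template.

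I would view $h_{q,\bseta}$ as a product of $r+2$ elementary factors, namely $\rho_0^{(\beta)}(\xi)$, the $r$ factors $D^{\bsalpha_\ell}\phi(\xi,\bsy)$, and $[D^0\phi(\xi,\bsy)]^{-(r+q)}$. Applying $D^i$ via the Leibniz rule generates five classes of subterms. Differentiating $\rho_0^{(\beta)}(\xi)$ gives one subterm, with $\beta\mapsto\beta+1$ and an appended $\bse_i$ in the $\bsalpha$-list. Differentiating the $\ell$th factor $D^{\bsalpha_\ell}\phi(\xi,\bsy)$ splits into a chain part and a direct part, yielding $r$ subterms each: the chain part increments $\bsalpha_\ell$ by $\bse_0$ and appends a new $\bse_i$, while the direct part replaces $\bsalpha_\ell$ by $\bsalpha_\ell+\bse_i$. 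Differentiating the denominator factor $(D^0\phi)^{-(r+q)}$ produces a constant prefactor $r+q$ multiplied by a further chain-plus-direct split; I absorb the constant by replicating the corresponding subterm $r+q$ times, yielding $r+q$ subterms from the chain part (with $2\bse_0$ and $\bse_i$ appended and denominator exponent raised by two) and $r+q$ subterms from the direct part (with $\bse_0+\bse_i$ appended and exponent raised by one).

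The bookkeeping step is to verify that each subterm satisfies the four constraints in \eqref{eq:h-form} with $(q,\bseta+\bse_i)$ in place of $(q,\bseta)$. The equality $\beta'\bse_0+\sum_\ell\bsalpha'_\ell=(r'+q-1,\bseta+\bse_i)$ follows by direct computation from the original identity $\beta\bse_0+\sum_\ell\bsalpha_\ell=(r+q-1,\bseta)$, tracking in each case where the extra $\bse_0$ (from chain terms) and $\bse_i$ are added. The inequalities $r'\le 2|\bseta+\bse_i|+q-1$, $\alpha'_{\ell,0}\le|\bseta+\bse_i|+q$ and $\beta'\le|\bseta+\bse_i|+q-1$ all follow from the original constraints since $|\bseta+\bse_i|=|\bseta|+1$ and each replacement introduces at most one extra unit to the relevant quantity. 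It is also worth noting that the new $\bsalpha$-entries $2\bse_0$ and $\bse_0+\bse_i$ both lie in $\N_0^{d+1}\setminus\{\bse_0,\bszero\}$ as required, and that the sign $(-1)^{r'}$ in each case matches the accumulated sign from the chain substitutions.

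Finally, summing the subterm counts gives at most $1+r+r+(r+q)+(r+q)=4r+2q+1$, and applying the original bound $r\le 2|\bseta|+q-1$ yields $K_{q,\bseta}\le 4(2|\bseta|+q-1)+2q+1=8|\bseta|+6q-3$, matching the claim. The main obstacle is the careful case-by-case verification of the four constraints, together with recognising that the form \eqref{eq:h-form} does not admit a free scalar coefficient (only the sign $(-1)^r$), which forces the replication of the denominator-derived subterms to absorb the multiplier $r+q$; this is precisely what turns the naive count $2r+3$ into the final $4r+2q+1$.
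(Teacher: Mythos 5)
Your proposal is correct and follows essentially the same route as the paper's proof: both decompose $h_{q,\bseta}$ into the factor $[D^0\phi]^{-(r+q)}$, the factor $\rho_0^{(\beta)}(\xi)$, and the $r$ factors $D^{\bsalpha_\ell}\phi$, apply the product rule together with the substitution $D^i\xi=-D^i\phi/D^0\phi$, replicate the two denominator-derived terms $r+q$ times each to absorb the scalar multiplier, and arrive at the same count $4r+2q+1\le 8|\bseta|+6q-3$. The five term classes you identify correspond exactly to the paper's $S_{1a}$, $S_{1b}$, $S_2$, $S_{3a,m}$, $S_{3b,m}$.
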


\begin{proof}
For any $i\in \{1:d\}$ and $h_{q,\bseta}(t,\bsy) = h_{q,\bseta,
(r,\bsalpha,\beta)}(t,\bsy)$ from \eqref{eq:h-form} we have
\begin{align*}
 &D^i h_{q,\bseta, (r,\bsalpha,\beta)}(t,\bsy)
 = D^i \bigg(
     \underbrace{\frac{(-1)^r}{[D^0\phi(\xi(t,\bsy),\bsy)]^{r + q}}\vphantom{\prod_{\ell=1}^2}}_{=:\, T_1(t,\bsy)}\;
     \underbrace{\rho_0^{(\beta)}(\xi(t,\bsy)) \vphantom{\prod_{\ell=1}^2}}_{=:\, T_2(t,\bsy)}\;
     \underbrace{\prod_{\ell=1}^{r} D^{\bsalpha_\ell} \phi(\xi(t, \bsy), \bsy)}_{=:\, T_3(t,\bsy)}
     \bigg).
\end{align*}
Using the chain rule and substituting $D^i\xi(t,\bsy) = -
D^i\phi(\xi(t,\bsy),\bsy)/D^0\phi(\xi(t,\bsy),\bsy)$ (see
\eqref{eq:Di-xi}), and then simplifying our notation by suppressing the
dependence on $t$ and $\bsy$, we obtain
\begin{align*}
 D^i T_1(t,\bsy)
 &= \frac{-(r+q)\,(-1)^r\big[D^iD^0\phi(\xi(t,\bsy),\bsy) + D^0D^0\phi(\xi(t,\bsy),\bsy)\, D^i\xi(t,\bsy)\big]}
      {[D^0\phi(\xi(t,\bsy),\bsy)]^{r+q+1}} \\
 &= \frac{(r+q)\,(-1)^{r+1}\,D^{\bse_0 +\bse_i}\phi(\xi)}{[D^0\phi(\xi)]^{r+1+q}}
    + \frac{(r+q)\,(-1)^{r+2}\,D^{2\bse_0}\phi(\xi)\,D^{\bse_i}\phi(\xi)}{[D^0\phi(\xi)]^{r+2+q}},
 \\
 D^i T_2(t,\bsy)
 &= \rho_0^{(\beta+1)}(\xi(t,\bsy))\,D^i\xi(t,\bsy)
 = -\frac{\rho_0^{(\beta+1)}(\xi)\,D^{\bse_i}\phi(\xi)}{D^0\phi(\xi)}, 
\\
D^i T_3(t,\bsy)
 &= \!\sum_{m=1}^{r}\!\! \!\big[
 D^i\!D^{\bsalpha_m}\!\phi(\xi(t,\bsy),\bsy) \!+\! D^0\! D^{\bsalpha_m}\!\phi(\xi(t,\bsy),\bsy) D^i\!\xi(t,\bsy)\big]
\!\!\!\prod_{\substack{\ell= 1\\\ell\ne m}}^{r}\!\!\! D^{\bsalpha_\ell}\!\phi(\xi(t,\bsy),\bsy) \\
 &= \sum_{m=1}^{r}\! \bigg[
 D^{\bsalpha_m+\bse_i}\phi(\xi) - \frac{D^{\bsalpha_m+\bse_0}\phi(\xi)\, D^{\bse_i}\phi(\xi)}{D^0\phi(\xi)}
 \bigg]
 \prod_{\substack{\ell= 1\\\ell\ne m}}^{r}\! D^{\bsalpha_\ell}\phi(\xi).
\end{align*}
Using the product rule, we arrive at
\begin{align*}
 D^i h_{q,\bseta,(r,\bsalpha,\beta)}
 &= (D^i T_1)\, T_2\,T_3
      + T_1\, (D^i T_2)\,T_3
      + T_1\, T_2\, (D^i T_3) \\
 &= \big(S_{1a} + S_{1b}\big)
 + S_2 + \sum_{m=1}^{r} \big(S_{3a,m} + S_{3b,m}\big),
\end{align*}
where
\begin{align*}
 &S_{1a} \coloneqq (r + q)\, h_{q,\bseta+\bse_i,(r+1, \widetilde\bsalpha,\beta)},
  &&\quad\mbox{with}\quad
  \widetilde\bsalpha_\ell
  \coloneqq
  \begin{cases}
  \bsalpha_\ell & \mbox{for } \ell=1,\ldots,r, \vspace{-0.1cm}\\
  \bse_i+\bse_0 & \mbox{for } \ell= r +1,
  \end{cases}
\\
  &S_{1b} \coloneqq (r+q)\, h_{q,\bseta+\bse_i,(r + 2, \widetilde\bsalpha,\beta)},
  &&\quad\mbox{with}\quad
  \widetilde\bsalpha_\ell
  \coloneqq
  \begin{cases}
  \bsalpha_\ell \quad\;\, & \mbox{for } \ell=1,\ldots,r, \vspace{-0.1cm}\\
  2\bse_0       \quad\;\, & \mbox{for } \ell=r+1, \vspace{-0.1cm}\\
  \bse_i        \quad\;\, & \mbox{for } \ell=r+2,
  \end{cases}
  \\
  &S_{2} \coloneqq h_{q,\bseta+\bse_i,(r+1, \widetilde\bsalpha,\beta+1)},
  &&\quad\mbox{with}\quad
  \widetilde\bsalpha_\ell
  \coloneqq
  \begin{cases}
  \bsalpha_\ell \qquad & \mbox{for } \ell=1,\ldots,r, \vspace{-0.1cm}\\
  \bse_i        \qquad & \mbox{for } \ell=r+1,
  \end{cases}
  \\
  &S_{3a,m} \coloneqq h_{q,\bseta+\bse_i,(r, \widetilde\bsalpha,\beta)},
  &&\quad\mbox{with}\quad
  \widetilde\bsalpha_\ell
  \coloneqq
  \begin{cases}
  \bsalpha_\ell & \mbox{for } \ell=1,\ldots,r,\,\ell\ne m,\vspace{-0.1cm}\\
  \bsalpha_m+\bse_i & \mbox{for } \ell=m,
  \end{cases}
  \\
  &S_{3b,m} \coloneqq h_{q,\bseta+\bse_i,(r+1, \widetilde\bsalpha,\beta)},
  &&\quad\mbox{with}\quad
  \widetilde\bsalpha_\ell
  \coloneqq
  \begin{cases}
  \bsalpha_\ell & \mbox{for } \ell=1,\ldots,r,\,\ell\ne m,\vspace{-0.1cm}\\
  \bsalpha_m+\bse_0 & \mbox{for } \ell=m,\vspace{-0.1cm}\\
  \bse_i & \mbox{for } \ell= r+1.
  \end{cases}
\end{align*}

Observe that all the $h_{q,\bseta+\bse_i,[\cdots]}$ functions above are of
the form \eqref{eq:h-form} with $\bseta$ replaced by $\bseta + \bse_i$,
and the conditions in \eqref{eq:h-form} are satisfied by an inductive
argument. For example, in $S_{1b}$, we gained two factors
$D^{2\bse_0}\phi(\xi)$ and $D^{\bse_i}\phi(\xi)$ to join the product over
$\ell$, increasing the upper limit of the product from $r$ to $r+2$, which
is consistent with the increase in the exponent of $D^0\phi(\xi)$ from
$r+q$ to $r+2+q$. Furthermore, $r+2 \le 2|\bseta|+q-1+2 =
2|\bseta+\bse_i|+q-1$, as required. Moreover, with
$\widetilde\bsalpha_{r+1} = 2\bse_0$ and $\widetilde\bsalpha_{r+2} =
\bse_i$, we have the updated sum $\beta \bse_0 + \sum_{\ell=1}^{r+2}
\widetilde\bsalpha_\ell = (r+q-1,\bseta) + 2\bse_0 + \bse_i =
(r+2+q-1,\bseta+\bse_i)$, as required. The result for the other terms
above can be justified in the same way. These $h_{q,\bseta+\bse_i,[\cdots]}$ functions are all
different so there is no cancellation.

Treating the multiple $(r+q) h_{q,\bseta+\bse_i,[\cdots]}$ in $S_{1a}$ as
$r+q$ occurrences of the same function and doing this analogously for
$S_{1b}$, we conclude that $D^i h_{q,\bseta,(r,\bsalpha,\beta)}$ can be
written as a sum of $K_{q,\bseta}$ functions of the form \eqref{eq:h-form}
with $\bseta$ replaced by $\bseta + \bse_i$, where
\begin{align*}
  K_{q,\bseta}
  &= (r+q) + (r+q)+1 + \textstyle\sum_{m=1}^r (1+1) = 4r + 2q+1 \\
  &\le 4(2|\bseta|+q-1) + 2q+1
  = 8|\bseta|+ 6q - 3.
\end{align*}
This completes the proof.
\end{proof}

\begin{lemma} \label{lem:count-t}
Let $d\ge 1$, $\bsnu\in \bbN_0^d$, and $[a,b]\subset\bbR$. Suppose that
$\phi$ and $\rho_0$ satisfy Assumption~\ref{asm:phi}, and recall the
definitions of $U_t$, $\xi$ and $V$ in \eqref{eq:Ut}, \eqref{eq:xi} and
\eqref{eq:V}, respectively. For any $q\in\bbN_0$ and $\bseta\le\bsnu$
satisfying $|\bseta|+q\le|\bsnu|+1$, we consider functions $h_{q,\bseta} :
V \to \R$ of the form \eqref{eq:h-form}. Then we can write
\begin{align*}
 \frac{\partial}{\partial t} h_{q,\bseta}(t,\bsy)
 = \sum_{k=1}^{K_{q,\bseta}} h_{q+1,\bseta}^{[k]}(t,\bsy),
 \quad\mbox{with}\quad
  K_{q,\bseta} \le 4|\bseta|+3q-1,
\end{align*}
where each function $h_{q+1,\bseta}^{[k]}$ is of the form
\eqref{eq:h-form} with $q$ replaced by $q+1$.
\end{lemma}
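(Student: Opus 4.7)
The plan is to mimic closely the proof of Lemma \ref{lem:count-y}, but with the derivative $\partial/\partial t$ in place of the spatial derivative $D^i$, and using the chain-rule formula $\partial_t \xi(t,\bsy) = 1/D^0\phi(\xi(t,\bsy),\bsy)$ from \eqref{eq:dt-xi} in place of the spatial formula \eqref{eq:Di-xi}. Because $\partial_t \xi$ contributes an extra factor of $1/D^0\phi(\xi,\bsy)$ each time the chain rule fires, the parameter $q$ (which governs the exponent $r+q$ of the denominator beyond what the $D^{\bsalpha_\ell}\phi$ factors in the numerator demand) will increase by exactly one, as required by the statement.

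Concretely, I would write $h_{q,\bseta,(r,\bsalpha,\beta)} = T_1\,T_2\,T_3$, where
\[
T_1 := \frac{(-1)^r}{[D^0\phi(\xi,\bsy)]^{r+q}}, \qquad T_2 := \rho_0^{(\beta)}(\xi), \qquad T_3 := \prod_{\ell=1}^r D^{\bsalpha_\ell}\phi(\xi,\bsy),
\]
and apply the product rule. A direct calculation using \eqref{eq:dt-xi} yields, suppressing the dependence on $t,\bsy$,
\[
\partial_t T_1 = \frac{(r+q)(-1)^{r+1}\,D^{2\bse_0}\phi(\xi)}{[D^0\phi(\xi)]^{r+q+2}}, \quad \partial_t T_2 = \frac{\rho_0^{(\beta+1)}(\xi)}{D^0\phi(\xi)}, \quad \partial_t T_3 = \sum_{m=1}^r \frac{D^{\bsalpha_m+\bse_0}\phi(\xi)}{D^0\phi(\xi)} \prod_{\substack{\ell=1\\ \ell\ne m}}^r D^{\bsalpha_\ell}\phi(\xi).
\]
Multiplying by the remaining two factors and collecting produces three groups of terms: one of multiplicity $r+q$ corresponding to $h_{q+1,\bseta,(r+1,\widetilde\bsalpha,\beta)}$ with $\widetilde\bsalpha_{r+1}=2\bse_0$; a single term $h_{q+1,\bseta,(r,\bsalpha,\beta+1)}$; and $r$ terms $h_{q+1,\bseta,(r,\widetilde\bsalpha^{(m)},\beta)}$ with $\widetilde\bsalpha_m^{(m)}=\bsalpha_m+\bse_0$.

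The main obstacle, as in Lemma \ref{lem:count-y}, is the routine but attentive bookkeeping to confirm that each of these terms satisfies all constraints of template \eqref{eq:h-form} with $q$ replaced by $q+1$. I would verify in each case that the denominator exponent matches the new value of $r'+(q+1)$, that the additive identity $\beta'\bse_0 + \sum_\ell \widetilde\bsalpha_\ell = (r'+q,\bseta)$ continues to hold (the contribution of $\bse_0$ that appears from $\partial_t\xi$ exactly accounts for the bump in the first coordinate), and that the side inequalities $r'\le 2|\bseta|+(q+1)-1$, $\widetilde\alpha_{\ell,0}\le |\bseta|+(q+1)$, and $\beta'\le |\bseta|+(q+1)-1$ all follow from the original hypotheses $r\le 2|\bseta|+q-1$, $\alpha_{\ell,0}\le |\bseta|+q$, $\beta\le |\bseta|+q-1$. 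Also one needs to check that the new $\widetilde\bsalpha_\ell$ avoid $\bszero$ and $\bse_0$, which is immediate for $2\bse_0$ and for $\bsalpha_m+\bse_0$ since $\bsalpha_m\ne\bszero,\bse_0$.

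Finally, counting the terms gives
\[
K_{q,\bseta} \,\le\, (r+q) + 1 + r \,=\, 2r + q + 1 \,\le\, 2(2|\bseta|+q-1) + q + 1 \,=\, 4|\bseta| + 3q - 1,
\]
which is the asserted bound. This completes the plan.
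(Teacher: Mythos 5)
Your proposal is correct and follows essentially the same route as the paper's proof: the same splitting $h_{q,\bseta,(r,\bsalpha,\beta)}=T_1T_2T_3$, the same three derivative computations via \eqref{eq:dt-xi}, the same identification of the resulting $(r+q)+1+r$ terms with instances of \eqref{eq:h-form} at $q+1$, and the same final count $2r+q+1\le 4|\bseta|+3q-1$. The bookkeeping checks you outline are exactly the ones the paper defers to the analogous verification in Lemma~\ref{lem:count-y}.
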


\begin{proof}
For $h_{q,\bseta}(t,\bsy) = h_{q,\bseta, (r,\bsalpha,\beta)}(t,\bsy)$
from \eqref{eq:h-form} we have
\begin{align*}
 \frac{\partial}{\partial t} h_{q,\bseta,(r,\bsalpha,\beta)}(t,\bsy)
 = \frac{\partial}{\partial t} \bigg(
     \underbrace{\frac{(-1)^r}{[D^0\phi(\xi(t,\bsy),\bsy)]^{r + q}}\vphantom{\prod_{\ell=1}^2}}_{=:\, T_1(t,\bsy)}\;
     \underbrace{\rho_0^{(\beta)}(\xi(t,\bsy)) \vphantom{\prod_{\ell=1}^2}}_{=:\, T_2(t,\bsy)}\;
     \underbrace{\prod_{\ell=1}^{r} D^{\bsalpha_\ell} \phi(\xi(t, \bsy), \bsy)}_{=:\, T_3(t,\bsy)}
     \bigg).
\end{align*}
Using the chain rule and substituting $\frac{\partial}{\partial t}\xi(t,\bsy)
= 1/D^0\phi(\xi(t,\bsy),\bsy)$ (see \eqref{eq:dt-xi}), and then
simplifying our notation by suppressing the dependence on $t$ and $\bsy$,
we obtain
\begin{align*}
 \frac{\partial}{\partial t} T_1(t,\bsy)
 &= \frac{-(r+q)\,(-1)^r\,D^0D^0\phi(\xi(t,\bsy),\bsy)\, \frac{\partial}{\partial t}\xi(t,\bsy)}
      {[D^0\phi(\xi(t,\bsy),\bsy)]^{r+q+1}}
 = \frac{(r+q)\,(-1)^{r+1}\,D^{2\bse_0}\phi(\xi)}{[D^0\phi(\xi)]^{(r+1)+(q+1)}},
 \\
 \frac{\partial}{\partial t} T_2(t,\bsy)
 &= \rho_0^{(\beta+1)}(\xi(t,\bsy))\,\frac{\partial}{\partial t}\xi(t,\bsy)
 = \frac{\rho_0^{(\beta+1)}(\xi)}{D^0\phi(\xi)},
 \\
 \frac{\partial}{\partial t} T_3(t,\bsy)
 &= \sum_{m=1}^{r} D^0 D^{\bsalpha_m}\phi(\xi(t,\bsy),\bsy)\, \frac{\partial}{\partial t}\xi(t,\bsy)
 \prod_{\substack{\ell= 1\\\ell\ne m}}^{r}\! D^{\bsalpha_\ell}\phi(\xi(t,\bsy),\bsy) \\
 &= \sum_{m=1}^{r} \frac{D^{\bsalpha_m+\bse_0}\phi(\xi)}{D^0\phi(\xi)}
 \prod_{\substack{\ell= 1\\\ell\ne m}}^{r}\! D^{\bsalpha_\ell}\phi(\xi).
\end{align*}
Using the product rule, we arrive at
\begin{align*}
 \frac{\partial}{\partial t} h_{q,\bseta,(r,\bsalpha,\beta)}
 &= \frac{\partial T_1}{\partial t}\, T_2\,T_3
      + T_1\, \frac{\partial T_2}{\partial t} \,T_3
      + T_1\, T_2\, \frac{\partial T_3}{\partial t}
 = S_1 + S_2 + \sum_{m=1}^{r} S_{3,m},
\end{align*}
where now
\begin{align*}
  &S_1 \coloneqq (r+q)\, h_{q+1,\bseta,(r+1, \widetilde\bsalpha,\beta)},
  &&\quad\mbox{with}\quad
  \widetilde\bsalpha_\ell
  \coloneqq
  \begin{cases}
  \bsalpha_\ell & \mbox{for } \ell=1,\ldots,r, \vspace{-0.1cm}\\
  2\bse_0 & \mbox{for } \ell=r+1,
  \end{cases}
  \\
  &S_2 \coloneqq h_{q+1,\bseta,(r, \bsalpha,\beta+1)},
  \\
  &S_{3,m} \coloneqq h_{q+1,\bseta,(r, \widetilde\bsalpha,\beta)},
  &&\quad\mbox{with}\quad
  \widetilde\bsalpha_\ell
  \coloneqq
  \begin{cases}
  \bsalpha_\ell & \mbox{for } \ell=1,\ldots,r,\,\ell\ne m,\vspace{-0.1cm}\\
  \bsalpha_m + \bse_0 & \mbox{for } \ell=m.
  \end{cases}
\end{align*}

Again, all of the $h_{q+1,\bseta,[\cdots]}$ functions above are of the
form \eqref{eq:h-form} with $q$ replaced by $q+1$, and the conditions in
\eqref{eq:h-form} are satisfied by an inductive argument with justification
similar to the arguments in the proof of Lemma~\ref{lem:count-y}.
These $h_{q+1,\bseta,[\cdots]}$ functions are all different so there is no
cancellation.

Treating the multiple $(r+q) h_{q+1,\bseta,[\cdots]}$ in $S_1$ as $r+q$
occurrences of the same function, we conclude that
$\frac{\partial}{\partial t} h_{q,\bseta,(r,\bsalpha,\beta)}$ can be
written as a sum of $K_{q,\bseta}$ functions of the form \eqref{eq:h-form}
with $q$ replaced by $q+1$, where now
\begin{align*}
  K_{q,\bseta}
  &= (r+q)+1 + \textstyle\sum_{m=1}^r 1 = 2r + q+1
  \le 2(2|\bseta|+q-1) + q+1 = 4|\bseta| + 3q-1.
\end{align*}
This completes the proof.
\end{proof}

\end{appendix}

\end{document}